\newcommand{\grad}{\nabla}
\newcommand{\dint}{\dyle\int}
\newcommand{\p}{\partial}
\newcommand{\re}{{I\!\!R}}
\newcommand{\ren}{\re^N}
\newcommand{\dyle}{\displaystyle}
\newcommand{\ene}{{I\!\!N}}
\newcommand{\io}{\int\limits_\O}
\renewcommand{\a }{\alpha }
\renewcommand{\b }{\beta }
\renewcommand{\d }{\delta }
\newcommand{\D }{\Delta }
\newcommand{\e }{\varepsilon }
\renewcommand{\l }{\lambda }
\newcommand{\n }{\nabla }
\newcommand{\s }{\sigma }
\renewcommand{\O }{\Omega }
\newcommand{\inn}{\mbox{ in }}
\newtheorem{Theorem}{Theorem}[section]
\theoremstyle{plain} \numberwithin{equation}{section} \numberwithin{figure}{section}
\newtheorem{theorem}{Theorem}[section]
\newtheorem{lemma}[theorem]{Lemma}
\newtheorem{proposition}[theorem]{Proposition}
\newtheorem{corollary}[theorem]{Corollary}
\newtheorem{definition}[theorem]{Definition}
\theoremstyle{definition}
\newtheorem{remark}[theorem]{Remark}
\begin{document}
\title[Quasilinear equations with fractional diffusion] {A note on quasilinear equations with fractional diffusion}
\author[B. Abdellaoui, P. Ochoa, I. Peral]{Boumediene Abdellaoui, Pablo Ochoa, Ireneo Peral}
\address{\hbox{\parbox{5.7in}{\medskip\noindent {B. Abdellaoui, Laboratoire d'Analyse Nonlin\'eaire et Math\'ematiques
Appliqu\'ees. \hfill \break\indent D\'epartement de Math\'ematiques, Universit\'e Abou Bakr Belka\"{\i}d, Tlemcen, \hfill\break\indent Tlemcen 13000, Algeria.}}}}
\email{boumediene.abdellaoui@inv.uam.es}
\address{P. Ochoa, Universidad Nacional de Cuyo-CONICET, 5500 Mendoza, Argentina}
\email{ochopablo@gmail.com}
\address{I. Peral, Universidad Aut\'onoma de Madrid,  28049 Madrid, Spain}
\email{ireneo.peral@uam.es}

\subjclass[2010]{35B65, 35J62, 35D40}

\keywords{fractional diffusion, nonlinear gradient terms, viscosity solutions, }

\thanks{ The first and the third authors are partially supported by Project MTM2016-80474-P, MINECO, Spain. The  first author also is partially supported by DGRSDT, Algeria and an Erasmus grant from Autonoma University of Madrid. The second author is partially supported by CONICET and grant PICT 2015-1701 AGENCIA.}

\date{\today}

\maketitle

\rightline{\textit{To Alberto Farina in his 50th birthday with our friendship}}

\begin{abstract}

In this paper, we study the existence of distributional solutions of the following non-local elliptic problem
\begin{eqnarray*}
 \left\lbrace
  \begin{array}{l}
    (-\Delta)^{s}u + |\nabla u|^{p} =f \quad\text{ in } \Omega\\
    \qquad \qquad \qquad \,\,\, u=0  \,\,\,\,\,\,\,\text{ in } \mathbb{R}^{N}\setminus \Omega, \quad s \in (1/2, 1). \\
  \end{array}
  \right.
\end{eqnarray*}
 We are interested in the relation between the regularity of the source term $f$, and the regularity of the corresponding solution. If $1 <  p<2s$, that is the natural
 growth, we are able to show the existence for all $f\in L^1(\O)$.

 In the subcritical case, that is, for $1 < p < p_{*}:=N/(N-2s+1)$, we show that solutions are $\mathcal{C}^{1, \alpha}$ for $f \in L^{m}$, with $m$ large enough. In
 the general case, we achieve the same result under a condition on the size of the source. As an application, we may show that for regular sources,
 distributional solutions are viscosity solutions, and conversely.
\end{abstract}

\section{Introduction}

Throughout this article, we shall consider the following  Dirichlet integro-differential problem
\begin{eqnarray}\label{boundary problem}
 \left\lbrace
  \begin{array}{l}
    (-\Delta)^{s}u + |\nabla u|^{p} =f \quad \,\, \,\text{in } \Omega\\
    \qquad \qquad \qquad \,\,\, u=0  \,\,\,\,\,\,\,\,\text{ in } \mathbb{R}^{N}\setminus \Omega, \\
  \end{array}
  \right.
\end{eqnarray}for $s \in (1/2, 1)$, $\Omega \subset \mathbb{R}^{N}$, $p>1$ and $f$ a non-negative measurable function. When the nonlinear term appears in the
righthand side the model \eqref{boundary problem} may be seen as a Kardar-Parisi-Zhang stationary problem driving by  fractional diffusion (see \cite{KPZ} for the
model in the local setting and \cite{AP} in the nonlocal case). The problem  with the nonlinear term in the left hand side is the stationary counterpart of a
Hamilton-Jacobi equation with a viscosity term, the principal nonlocal operator. See \cite{Sil} and the references therein.

The fractional Laplacian operator $(-\Delta)^{s}$, and more general pseudo-differential operators, have been a classic topic in Harmonic Analysis and PDEs.
Moreover, these is a  renovated interest in these kind of operators. Non-local operators arise naturally in continuum mechanics, image processing, crystal
dislocation, phase transition phenomena, population dynamics, optimal control and theory of games as pointed out in \cite{BC}, \cite{BV}, \cite{C}, \cite{CVa},
\cite{GO} and the references therein. For instance, the fractional heat equation may appear in probabilistic random-walk procedures and, in turn, the stationary
case may do so in pay-off models (see \cite{BV} and the references therein). In the works \cite{MK} and \cite{MK1} the description of anomalous diffusion via
fractional dynamics is investigated and various fractional partial differential equations are derived from L\'evy random walk models, extending Brownian walk
models in a natural way. Fractional operators are also involved in financial mathematics, since L\`{e}vy processes with jumps revealed as more appropriate models
of stock pricing. The \textit{bounday condition}
$$u=0 \,\,\textnormal{ in } \,\mathbb{R}^{N}\setminus \Omega$$which is given in the whole complement may be interpreted  from the stochastic point of view as the
fact that a L\`{e}vy process can exit the domain $\Omega$ for the first time jumping to any point in its complement.

Regarding the integro-differential problem that we discuss in the present manuscript,  the main results of our research may be summarized as follows

\begin{itemize}
  \item In the sub-critical scenario $1<p < p_{*}:=\frac{N}{N-2s+1}$, there is a unique non-negative distributional solution $u \in W^{1, q}_0(\Omega)$ of  \eqref{boundary problem} for
      any $1\leq q < p_{*}$.
  \item Now, for $x\in \O$, setting $\d(x)=\text{dist}(x,\p\O)=\textnormal{dist}(x, \Omega^{c})$ (since $\O$ is a bounded regular domain), then if $1<p<p_{*}$, with similar arguments to those in \cite{AP} and \cite{CV}, we have
 \begin{itemize}
 \item If $m < \frac{N}{2s-1}$, then $|\nabla u|\d^{1-s} \in L^{q}(\Omega)$ for all $1\leq q < \frac{mN}{N-m(2s-1)}$.
 \item If $m= \frac{N}{2s-1}$, then $|\nabla u|\d^{1-s} \in L^{q}(\Omega)$ for all $ 1\leq q<\infty$.
 \item If $m > \frac{N}{2s-1}$, then $|\nabla u| \in \mathcal{C}^{\alpha}(\Omega)$ for some $\alpha \in (0,1)$.
\end{itemize}
In the interval $1< p < p_{*}$ the result lies on the estimates for the Green function by Bogdan and Jakubowski in \cite{BJ}. \item For any $1 < p < \infty$,
$u$ is $\mathcal{C}^{1, \alpha}$ provided the source is sufficiently small.

\item Any  solution $u \in \mathcal{C}^{1, \alpha}(\Omega)$ with H\"{o}lder continuous source is a viscosity solution, and conversely.
\end{itemize}

Notice that in the local case $s=1$, the main existing results can be summarized into two points: If $p\le 2$, then the existence of solution is obtained for all
$f\in L^1(\O)$ using approximation arguments and suitable test function, see \cite{BGO} and the references therein. However the truncating arguments are not
applicable for $p>2$ including for $L^\infty$ data. In the case of Lipschitz data, the author in \cite{PPL} was able to get the existence and the uniqueness of a
regular solution for all $p$. However this last argument is not applicable for $L^m$ data including for $p$ close to two.

For the non local case, the first existence result was obtained in \cite{CV}.  Indeed, they consider the problem
 \begin{eqnarray}\label{CV1}
 \left\lbrace
  \begin{array}{l}
    (-\Delta)^{s}u + \epsilon g\left(|\nabla u| \right)=\nu  \quad\,\,\text{ in } \Omega\\
    \,\,\quad \qquad \qquad \qquad \,\,\, u=0  \,\,\,\,\,\,\,\,\text{ in } \mathbb{R}^{N}\setminus \Omega, \quad s \in (1/2, 1), \,\\
  \end{array}
  \right.
\end{eqnarray}with $ \epsilon \in \left\lbrace -1, 1 \right\rbrace$, for a continuous and non-negative function $g$ satisfying $g(0)=0$ and  a non-negative Radon
measure $\nu$ so that $\dyle\int_\Omega \delta^{\beta}d\nu < \infty$ with $\beta \in [0, 2s-1)$.

In \cite[Thm. 1.1]{CV}, they show
that for $\epsilon=1$ and  under the integrability  assumption
 $$\int_{1}^{\infty}g(s)s^{-1-p^{*}}ds < \infty,$$problem \eqref{CV1} admits a non-negative distributional solution $u \in W^{1, q}_0(\Omega)$, for all $1 \leq q <
 p_{*, \beta}$ where
 $$p_{*, \beta}:=\frac{N}{N-2s+1+\beta}.$$In particular, this result implies that the Dirichlet problem \eqref{boundary problem}  admits a solution $u$ in $W^{1,
 q}_0(\Omega)$ for all $q \in [1, p_{*})$ and for $1<p < p_{*}$. Moreover, for $g$ H\"{o}lder continuous and bounded in $\mathbb{R}$, solutions to \eqref{CV1}
 becomes strong for a H\"{o}lder continuous source.

The regularity of solutions to \eqref{boundary problem} is strongly related to the corresponding issue for problems
 \begin{eqnarray}\label{gener1-corr}
 \left\lbrace
  \begin{array}{l}
    (-\Delta)^{s}v =f  \quad\,\,\text{ in } \Omega\\
  \,\,  \qquad \,\,\, v=0  \,\,\,\,\,\,\,\,\text{ in } \mathbb{R}^{N}\setminus \Omega, \\
  \end{array}
  \right.
\end{eqnarray}
As a by-product of the results in \cite{AP}, \cite{CV} and \cite{CV1}, we have the following result which will be largely used throughout our paper.

\begin{theorem}\label{th AP}
Suppose that $f\in L^m(\O)$ with $m\ge 1$ and define $v$ to be the unique solution to problem \eqref{gener1-corr} with $s>\frac12$. Then for all $1\leq p<\frac{m
N}{N-m(2s-1)}$, there exists a positive constant $C\equiv \hat{C}(\O, N, s,p)$ such that
\begin{equation}\label{dd11-corr}
\bigg\||\n v| \d^{1-s}\bigg\|_{L^p(\Omega)}\le\hat{ C}||f||_{L^m(\O)}.
\end{equation}
Moreover,
\begin{enumerate}
\item If $m=\frac{N}{2s-1}$, then $|\n v|\d^{1-s}\in L^p(\Omega)$ for all $1\leq p<\infty$. \item If $m>\frac{N}{2s-1}$, then $v\in \mathcal{C}^{1, \sigma}(\Omega)$
    for some $\sigma\in (0,1)$, and
$$
\bigg\| |\n v|\d^{1-s}\bigg\|_{L^\infty(\O)}\le C||f||_{L^m(\O)}.
$$
\end{enumerate}
\end{theorem}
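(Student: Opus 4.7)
The plan is to reduce the theorem to pointwise gradient estimates on the Green function for the fractional Dirichlet Laplacian combined with a classical Hardy–Littlewood–Sobolev type inequality. Since $v$ solves \eqref{gener1-corr}, I would write the standard representation
\[
v(x)=\int_{\O}G_{s}(x,y)f(y)\,dy, \qquad \nabla v(x)=\int_{\O}\nabla_{x}G_{s}(x,y)f(y)\,dy,
\]
where $G_{s}$ is the Green function of $(-\Delta)^{s}$ on $\O$. The crucial ingredient is the Bogdan–Jakubowski gradient bound from \cite{BJ}: for $s>\tfrac12$ one has, roughly,
\[
|\nabla_{x}G_{s}(x,y)|\,\le\,\frac{C}{|x-y|^{N-2s+1}}\min\!\Bigl\{1,\frac{\delta(y)^{s}}{|x-y|^{s}}\Bigr\}\min\!\Bigl\{1,\frac{\delta(x)^{s-1}}{|x-y|^{s-1}}\Bigr\}.
\]
Multiplying by $\delta(x)^{1-s}$ absorbs the singular factor near the boundary and leaves, up to a constant depending only on $\O,N,s$, the clean kernel $|x-y|^{-(N-2s+1)}$; that is,
\[
|\nabla_{x}G_{s}(x,y)|\,\delta(x)^{1-s}\,\le\,\frac{C}{|x-y|^{N-2s+1}}\qquad\text{for }x,y\in\O.
\]

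With this bound in hand, the estimate \eqref{dd11-corr} follows from the boundedness of the Riesz-type potential
\[
T_{2s-1}g(x):=\int_{\O}\frac{g(y)}{|x-y|^{N-(2s-1)}}\,dy
\]
from $L^{m}(\O)$ into $L^{p}(\O)$ for every $p$ with $\tfrac{1}{p}>\tfrac{1}{m}-\tfrac{2s-1}{N}$, which is exactly the range $1\le p<\tfrac{mN}{N-m(2s-1)}$ and is a particular instance of the Hardy–Littlewood–Sobolev inequality applied with order $2s-1\in(0,1)$. The needed constant depends only on $\O,N,s,p$, producing $\hat C$. This argument is the same one used in \cite{CV}, \cite{CV1} and reproduced in \cite{AP}, so one essentially cites those works and records the resulting estimate.

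For the two remaining items, the endpoint $m=N/(2s-1)$ is the critical case of fractional integration: $T_{2s-1}$ sends $L^{N/(2s-1)}(\O)$ into $\mathrm{BMO}(\O)$, hence into every $L^{p}(\O)$ with $p<\infty$ on a bounded domain, which gives (1). For (2), once $m>N/(2s-1)$ the kernel $|x-y|^{-(N-2s+1)}$ is integrable uniformly in $x$ with exponent $m'$, so Hölder's inequality directly produces the $L^{\infty}$ bound for $|\nabla v|\delta^{1-s}$; the interior/boundary $\mathcal C^{1,\sigma}$ regularity for $v$ with such $f$ is then a consequence of the up-to-the-boundary regularity theory for the fractional Laplacian on bounded regular domains (Silvestre, Ros-Oton–Serra), combined with a Morrey/Sobolev bootstrap using the improved integrability of $\nabla v$ already obtained.

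The main obstacle is verifying that the boundary weight $\delta(x)^{1-s}$ genuinely cancels the boundary singularity of $\nabla_{x}G_{s}(x,y)$ \emph{uniformly} in $y$, so that one reduces cleanly to a standard Riesz potential. This is delicate because the two minima in the Bogdan–Jakubowski estimate interact differently depending on whether $|x-y|$ is small compared with $\delta(x)$ or $\delta(y)$; one must split into the regimes $|x-y|\le\tfrac12\min\{\delta(x),\delta(y)\}$ and its complement, and check in each that the bound by $C|x-y|^{-(N-2s+1)}$ survives after multiplication by $\delta(x)^{1-s}$. Once this case analysis is done, the rest is a direct invocation of fractional integration and the elliptic regularity theory already available in \cite{AP,CV,CV1,BJ}.
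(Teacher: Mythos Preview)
The paper does not give its own proof of this theorem: it is stated as a by-product of the results in \cite{AP}, \cite{CV} and \cite{CV1}, and is simply quoted for later use. Your proposal---Green function representation, the Bogdan--Jakubowski gradient bound from \cite{BJ}, cancellation of the boundary factor by $\delta^{1-s}$, and reduction to a Riesz potential $L^{m}\to L^{p}$ estimate---is exactly the argument carried out in those references (see in particular the computations in \cite{AP}, which are partly reproduced later in this paper in the proof of Theorem~\ref{thh}), so your approach is correct and coincides with the cited one.
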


In the case where $f\in L^1(\O)\cap L^m_{loc}(\O)$ where $m>1$, then as it was proved in \cite{AP}, the above regularity results hold locally in $\O$. More
precisely we have
\begin{proposition}\label{key2-locc}
Assume that $f\in L^1(\O)\cap L^m_{loc}(\O)$ with $m>1$. Let $v$ the unique solution to problem \eqref{gener1-corr}. Suppose that $m<\frac{N}{2s-1}$, then for any
$\O_1\subset\subset \O'_1\subset \subset \O$ and for all $1\leq p\le \frac{m N}{N-m(2s-1)}$, there exists $\tilde{C}:=\tilde{C}(\O,\O_1,\O'_1,N,s,p)$ such that
\begin{equation}\label{dd11loc}
||\n v||_{L^p(\Omega_1)}\le \tilde{C}(||f||_{L^1(\O)}+||f||_{L^m(\O'_1)}).
\end{equation}
Moreover,
\begin{enumerate}
\item If $m=\frac{N}{2s-1}$, then $|\n v|\in L^p_{loc}(\Omega)$ for all $1 \leq p<\infty$. \item If $m>\frac{N}{2s-1}$, then $v\in \mathcal{C}^{1, \sigma}(\Omega)$
    for some $\sigma\in (0,1)$.
\end{enumerate}
\end{proposition}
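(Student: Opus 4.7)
The plan is to reduce the local statement to the global Theorem~\ref{th AP} by a cutoff decomposition, handling the piece of $f$ that is only $L^1$ via the smoothness of the fractional Poisson problem away from the support of the source.

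First I would fix two intermediate open sets $\Omega_1\subset\subset U_1\subset\subset U_2 \subset\subset \O'_1$ and choose a cutoff $\eta\in \ci(U_2)$ with $\eta\equiv 1$ on $U_1$ and $0\le\eta\le 1$. Write $f=f_1+f_2$ with $f_1:=\eta f$ and $f_2:=(1-\eta)f$, and split accordingly $v=v_1+v_2$, where $v_i$ is the unique solution of \eqref{gener1-corr} with datum $f_i$. Then $f_1\in L^m(\O)$ with $\|f_1\|_{L^m(\O)}\le \|f\|_{L^m(\O'_1)}$, while $f_2\in L^1(\O)$ with $\|f_2\|_{L^1(\O)}\le \|f\|_{L^1(\O)}$ and, crucially, $f_2\equiv 0$ on $U_1$.

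For $v_1$, I would apply Theorem~\ref{th AP} directly: since $f_1\in L^m(\O)$,
$$
\bigl\| |\n v_1|\,\d^{1-s}\bigr\|_{L^p(\O)}\le \hat C\,\|f_1\|_{L^m(\O)}\le \hat C\,\|f\|_{L^m(\O'_1)},
$$
for all $1\leq p<mN/(N-m(2s-1))$. Because $\Omega_1\subset\subset \O$, the distance $\d$ is bounded away from zero on $\Omega_1$, so $\|\n v_1\|_{L^p(\Omega_1)}\le C(\O,\Omega_1)\|f\|_{L^m(\O'_1)}$. For the two limiting statements, I would use the same argument replacing the bound of Theorem~\ref{th AP} by parts~(1) and~(2) respectively, giving $|\n v_1|\in L^p_{loc}$ for every $p<\infty$ when $m=N/(2s-1)$, and $v_1\in \mathcal C^{1,\sigma}(\O)$ (hence locally) when $m>N/(2s-1)$.

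For $v_2$ the key observation is that $(-\D)^s v_2=0$ in $U_1$, so I would invoke interior regularity for $s$-harmonic functions. Representing $v_2(x)=\int_\O G_\O(x,y)\,f_2(y)\,dy$ and using the pointwise/derivative bounds on the Green function $G_\O$ away from the diagonal (as established in \cite{BJ} and used in \cite{AP}, \cite{CV}), one obtains for every $k\in \N$ a constant $C_k=C_k(\O,\Omega_1,U_1)$ such that
$$
\|D^k v_2\|_{L^\infty(\Omega_1)}\le C_k\,\|f_2\|_{L^1(\O)}\le C_k\,\|f\|_{L^1(\O)},
$$
since the kernel $\nabla_x^{k} G_\O(x,y)$ is bounded for $(x,y)$ in the positive-distance set $\Omega_1\times (\O\setminus U_1)$. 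Alternatively one may use the standard interior Schauder-type estimate
$$
\|v_2\|_{\mathcal C^{1,\alpha}(\Omega_1)}\le C\Bigl(\|v_2\|_{L^\infty(U_1)}+\int_{\re^N}\frac{|v_2(y)|}{1+|y|^{N+2s}}\,dy\Bigr),
$$
together with the $L^\infty(U_1)$ and $L^1(\re^N)$ bounds on $v_2$ produced by the Green representation applied to $f_2$.

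Adding the estimates for $v_1$ and $v_2$ yields \eqref{dd11loc} and the assertions (1) and (2). The technical point that needs care is the quantitative dependence on $\|f\|_{L^1(\O)}$ for the remote piece $v_2$: since $f$ is only integrable, one cannot invoke global $L^m$-regularity, and the argument must exploit the positive distance between $\Omega_1$ and $\sop f_2\subset \O\setminus U_1$. This is precisely where the smoothing of $(-\D)^{-s}$ away from the source becomes the main mechanism, and it is the only step that differs substantively from the proof of Theorem~\ref{th AP}.
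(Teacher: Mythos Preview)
Your approach is correct and is exactly the standard localization device for this kind of statement. Note, however, that the paper does not supply its own proof of this proposition: it is stated as a result taken from \cite{AP}, so there is no in-paper argument to compare against. Your cutoff decomposition $f=\eta f+(1-\eta)f$, applying the global weighted estimate of Theorem~\ref{th AP} to the compactly supported $L^m$ piece and the off-diagonal smoothness of the Green kernel (from \cite{BJ}) to the $L^1$ remainder, is precisely the mechanism one expects, and is presumably what is done in \cite{AP}.

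Two small remarks. First, the proposition as written allows the endpoint $p=\dfrac{mN}{N-m(2s-1)}$, while Theorem~\ref{th AP} only delivers the strict inequality; your argument inherits that gap, and this is almost certainly a typographical slip in the statement rather than a defect of your proof. Second, in your alternative Schauder route you need $\|v_2\|_{L^\infty(U_1)}$, but since $\sop\, f_2$ may reach $\partial U_1$ the kernel is not uniformly bounded on $U_1\times \sop\, f_2$; the fix is trivial---insert one more intermediate open set between $\Omega_1$ and $U_1$, or simply stick with your direct Green-function argument, which only requires $x\in\Omega_1$ and therefore works without modification.
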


 As a consequence we conclude that, if $f\in L^m(\O)$ with $m>1$, then
\begin{enumerate}
\item If $m\ge\frac{N}{2s-1}$, then $\dyle\io |\n v|^a dx<\infty$ for all $a<\frac{1}{1-s}$. \item If $1<m<\frac{N+2s}{2s-1}$, then $\dyle \io |\n v|^a
    dx<\infty$ for all $a<\check{P}:=\frac{m N}{N(m(1-s)+1)-m(2s-1)}$.
\end{enumerate}
\begin{remark}
It is clear that $a<a_0=\frac{1}{1-s}$ is optimal. Before proving the optimality of $a_0$, let us recall the next Hardy inequality that will be used systematically in what follows.
\begin{proposition}\label{hardydy}(Hardy inequality)
Assume that $\O$ is a bounded regular domain of $\ren$ and $1<p<N$. Then there exists a positive constant $C(\O)$ such that for all $\phi\in W^{1,p}_0(\O)$, we have
\begin{equation}\label{hardydye}
C(\O)\dint_\Omega \frac{|\phi|^p}{\d^p}dx\le \dint_\Omega |\nabla \phi|^pdx<+\infty.
\end{equation}
\end{proposition}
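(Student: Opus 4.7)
The plan is to establish this classical weighted Hardy inequality by reducing it to the familiar one-dimensional version via boundary-adapted coordinates, after decomposing $\O$ into a thin tubular neighborhood of $\p\O$ and a region where $\d$ is uniformly bounded below. By the density of $C^\infty_c(\O)$ in $W^{1,p}_0(\O)$ together with Fatou's lemma, it suffices to prove \eqref{hardydye} for $\phi\in C^\infty_c(\O)$; the finiteness of the right-hand side is then built into the hypothesis $\phi\in W^{1,p}_0(\O)$.

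Since $\O$ is a bounded regular (at least $C^2$) domain, there exists $\e_0>0$ such that on the tubular strip $\O_{\e_0}:=\{x\in\O:\d(x)<\e_0\}$ the distance function $\d$ is $C^2$ with $|\n\d|\equiv 1$, and the map $\Phi:\p\O\times(0,\e_0)\to\O_{\e_0}$, $\Phi(\s,t)=\s-t\nu(\s)$ (with $\nu$ the outward unit normal), is a diffeomorphism with uniformly bounded Jacobian. On the interior piece $\O\setminus\O_{\e_0}$ one has $\d\ge\e_0$, so
\[
\int_{\O\setminus\O_{\e_0}}\frac{|\phi|^p}{\d^p}\,dx\le \e_0^{-p}\int_\O|\phi|^p\,dx\le C\int_\O|\n\phi|^p\,dx
\]
by the classical Poincar\'e inequality in $W^{1,p}_0(\O)$. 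On $\O_{\e_0}$ I would change variables via $\Phi$, freeze $\s\in\p\O$, and apply the one-dimensional Hardy inequality on $(0,\e_0)$ to the slice function $t\mapsto\phi(\Phi(\s,t))$, which vanishes at $t=0$, obtaining $\big(\tfrac{p-1}{p}\big)^p\int_0^{\e_0}|\phi\circ\Phi|^p t^{-p}\,dt\le\int_0^{\e_0}|\p_t(\phi\circ\Phi)|^p\,dt$; integration in $\s$ against the surface measure of $\p\O$, together with the bounded Jacobian of $\Phi$ and the identity $\d\circ\Phi=t$, then yields the Hardy bound on $\O_{\e_0}$.

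The minor technical nuisance is that the slice function need not vanish at $t=\e_0$, while the one-dimensional Hardy inequality requires a zero boundary datum. I would handle this with a smooth cutoff $\eta\in C^\infty([0,\e_0))$ equal to $1$ on $[0,\e_0/2]$ and vanishing near $\e_0$, splitting $\phi=\eta(\d)\phi+(1-\eta(\d))\phi$ and applying the slicewise 1D estimate only to $\eta(\d)\phi$. The resulting commutator term $|\phi\,\n\eta(\d)|^p$ is supported where $\d\gtrsim\e_0$ and is therefore dominated by $\int_\O|\phi|^p\,dx$, which is in turn controlled by $\int_\O|\n\phi|^p\,dx$ via Poincar\'e. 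Summing the two regional bounds delivers \eqref{hardydye} with a constant depending only on $\O$, $N$ and $p$. I expect no conceptual obstacle, since this inequality is classical (see e.g.\ the monographs by Opic--Kufner or Maz'ya); the only careful bookkeeping concerns the cutoff argument and the Jacobian of $\Phi$.
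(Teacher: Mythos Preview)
Your argument is correct and follows the standard route to the boundary Hardy inequality: tubular coordinates near $\p\O$, the one-dimensional Hardy inequality along normal rays, and Poincar\'e on the interior. One minor remark: the cutoff step is not actually needed, since the one-dimensional Hardy inequality $\big(\tfrac{p-1}{p}\big)^p\int_0^{\e_0}|g(t)|^p t^{-p}\,dt\le\int_0^{\e_0}|g'(t)|^p\,dt$ holds for any absolutely continuous $g$ with $g(0)=0$, with no condition at $t=\e_0$ (write $g(t)=\int_0^t g'$ and apply Hardy's integral inequality for averages). Keeping the cutoff does no harm, of course.

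As for comparison with the paper: there is nothing to compare. The paper merely \emph{recalls} this proposition as a classical fact inside a remark and offers no proof; it is invoked as a black box in the optimality argument and in Theorem~\ref{non}. Your write-up therefore supplies what the paper omits, and the approach you chose is the textbook one (as you note, Opic--Kufner or Maz'ya).
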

We prove now the optimality of $a_0$. We argue by contradiction. Assume that, for $0\lneqq f\in L^\infty(\O)$, there exists a solution $v$ to
\eqref{gener1-corr} such that $v\in W^{1,p}_0(\O)$ with $p>\frac{1}{1-s}$.

By using the classical Hardy inequality we obtain that
$$\dint_\Omega \frac{v^p}{\d^p}dx\le \dint_\Omega |\nabla v|^pdx<+\infty.$$
By the results in \cite{RS} the solution behaves  as $v\backsimeq \d^s$, therefore, as a consequence, $\dfrac{1}{\d^{p(1-s)}}\in L^1(\O)$, that is,
$p<\frac{1}{1-s}$, a contradiction.

Hence, the bound for the exponent of the gradient seems to be natural if we impose that the solution lies in the Sobolev space $W^{1,p}_0(\O)$ for the problem
with reaction gradient term.

In the case of absorption gradient term, this affirmation seems to be difficult to prove, however, in Theorem \ref{non}, we will show that the non existence
result holds, at least, for large values of $p$ and for all bounded non negative data.

In the case of gradient reaction term and for $2s\le p<\dfrac{s}{1-s}$, the authors in \cite{AP} proved the existence of a solution $u$ with $|\n u|\in
L^p_{loc}(\O)$ using a fixed point argument. In the present paper we will use the same approach to get the existence of a solution for $p\ge 2s$. However, in
addition to the regularity condition of $f$, smallness condition on the source term $||f||_{L^m(\O)}$ is also needed.
\end{remark}

The paper is organized as follows. In Section \ref{into}, we introduce the functional setting and we precise the notion of solutions that we will use throughout
this work as the weak sense and the viscosity sense. We give also some useful estimates for weak solutions and the general comparison principle. A non existence
result is proved using suitable estimates on the Green function for the fractional Laplacian with drift term.

The existence of a solution is proved in Section \ref{exis}. In the Subsection \ref{natural}  we treat the case of natural growth behavior in the gradient term, namely the case $1 < p<2s$. In this case existence of a solution is obtained for all $L^1$ datum. As a complement of the result proved in \cite{CV}, we prove that if $p>p_*$, the existence of a solution for general measure data $\nu$ is not true and additional hypotheses on $\nu$ related to a fractional capacity are needed.

Problem with a linear zero order reaction term is also analyzed. In such a case we are able to show existence for data in $L^1$ and then
a breaking of resonance occurs under natural hypotheses on the zero order term and $p$.

Some additional regularity results are obtained in the subcritical case $1 <p<p_*$.

 The  general case,  $p\ge 2s$, is treated in Subsection \ref{general}. Here and since we will use fixed point theorem, we need to impose some additional condition on the regularity and the size of $f$. The existence result is obtained in a suitable weighted Sobolev space under additional hypotheses on $p$. The above existence result holds trivially for the case $s=1$ and then  can be seen as an extension of the existence result obtained in \cite{PPL} in the framework of $L^m$ datum.

The analysis of the viscosity solution is done is Section \ref{equiv} where it is also proved that weak solution is a viscosity solution and viceversa if the data $f$ is sufficiently regular and $s$ is close to 1.

Some related open problems are given in the last section.

\subsection{Basic notation}In what follows, $\Omega$ will denote a bounded, open and $\mathcal{C}^{2}$ domain in $\mathbb{R}^{N}$ with bounded boundary, $N \geq
1$. We introduce some functional-space  notation. By $USC(\Omega)$, $LSC(\Omega)$ and $\mathcal{C}(\Omega)$, we denote the spaces of upper semi-continuous, lower
semi continuous and continuous real-valued functions in $\Omega$, respectively. Moreover, the space $\mathcal{C}^{k}(\Omega)$, $k \geq 1$, is defined as the set
of functions which derivatives of orders $\leq k$ are continuous in $\Omega$. Also, the H\"{o}lder space $\mathcal{C}^{k, \alpha}(\Omega)$ is the set of
$\mathcal{C}^{k}(\Omega)$ whose $k-$th order partial derivatives are  locally H\"{o}lder continuous with exponent $\alpha$ in $\Omega$.

For $\sigma \in \mathbb{R}$, we define the truncation operator as follows
$$T_k(\sigma):=\max(-k, \min(k, \sigma)).$$Finally, for any $u$, we denote by
$$u_+= \max\left\lbrace 0, u \right\rbrace \quad \textnormal{ and } \quad u_{-}=\max\left\lbrace 0, -u \right\rbrace.$$

\section{Preliminaries and technical tools.}\label{into}

In order to introduce the notion of distributional solutions, we give some definitions. For $s \in (\frac12, 1)$ and $u\in\mathcal{S}(\mathbb{R}^{N})$,  the fractional
Laplacian $(-\Delta)^{s}$ is given by
$$(-\Delta)^{s}u(x):=\lim_{\epsilon \to 0}(-\Delta)^{s}_{\epsilon}u(x)$$
where
$$(-\Delta)^{s}_\epsilon u(x):= \int_{\mathbb{R}^{N}}\frac{u(x)-u(y)}{|x-y|^{N+2s}}\chi_{\epsilon}(|x-y|)dy$$with:

\begin{equation*}
\chi_t(|x|) := \left\lbrace
  \begin{array}{l}
  0,  \quad |x|< t\\
    1,  \quad |x|\geq t. \\
  \end{array}
  \right.
\end{equation*}
For larger class of functions the fractional Laplacian can be defined by density. See \cite{dine} or \cite{sil} for instance.
\begin{definition}\label{test} We say that a function $\phi \in \mathcal{C}(\mathbb{R}^{N})$ belongs to $\mathbb{X}_s(\Omega)$ if and only if the following holds
\begin{itemize}
\item supp$(\phi) \subset \overline{\Omega}$. \item The fractional Laplacian  $(-\Delta)^{s}\phi(x)$ exists  for all  $x \in \Omega$ and there is $C>0$ so
    that $|(-\Delta)^{s}\phi(x)|\leq C$. \item There is $\varphi \in L(\Omega, \delta^{s}dx)$ and $\epsilon_0 >0$ so that
$$|(-\Delta)^{s}_\epsilon \phi(x)|\leq \varphi(x),$$a. e. in $\Omega$ and for all $\epsilon \in (0, \epsilon_0)$.
\end{itemize}
\end{definition}

Before staring the sense for which solutions are defined, let us recall the definition of the fractional Sobolev space and some of its properties.

Assume that $s\in (0,1)$ and $p>1$. Let $\O\subset \ren$, then the fractional Sobolev Space $W^{s,p}(\Omega)$ is defined by
$$
W^{s,p}(\Omega)\equiv \Big\{ \phi\in L^p(\O):\iint_{\O\times \O}|\phi(x)-\phi(y)|^pd\nu<+\infty\Big\},
$$
where $d\nu=\dyle\frac{dxdy}{|x-y|^{N+ps}}$.

Notice that $W^{s,p}(\O)$ is a Banach Space endowed with the norm
$$
\|\phi\|_{W^{s,p}(\O)}= \Big(\dint_{\O}|\phi(x)|^pdx\Big)^{\frac 1p} +\Big(\iint_{\O\times\O}|\phi(x)-\phi(y)|^pd\nu\Big)^{\frac 1p}.
$$
The space $W^{s,p}_{0} (\O)$ is defined as the completion of $\mathcal{C}^\infty_0(\O)$ with respect to the previous norm.

If $\O$ is a bounded regular domain, we can endow $W^{s,p}_{0}(\O)$ with the equivalent norm
$$
||\phi||_{W^{s,p}_{0}(\O)}= \Big(\iint_{\O\times \O}|\phi(x)-\phi(y)|^pd\nu\Big)^{\frac 1p}.
$$
Notice that if $ps<N$, then we have the next Sobolev inequality, for all $v\in C_{0}^{\infty}(\ren)$,
$$
\iint_{\re^{2N}} \dfrac{|v(x)-v(y)|^{p}}{|x-y|^{N+ps}}\,dxdy\geq S \Big(\dint_{\mathbb{R}^{N}}|v(x)|^{p_{s}^{*}}dx\Big)^{\frac{p}{p^{*}_{s}}},
$$
where $p^{*}_{s}= \dfrac{pN}{N-ps}$ and $S\equiv S(N,s,p)$.

In the following definition, we introduce the class of distributional solutions.

Assume that $\nu$ is a bounded Radon measure and consider the problem
\begin{equation}\label{eq:def}
\begin{cases}
(-\Delta )^s v=\nu &\hbox{   in   }  \Omega,\\ v=0   &\hbox{   in   } \mathbb{R}^N\setminus\Omega,
\end{cases}
\end{equation}
Let us begin by precising the sense in which solutions are defined for general class of data.
\begin{definition}\label{def1}
We say that $u$ is a weak solution to problem \eqref{eq:def} if $u\in L^1(\O)$, and for all $\phi\in \mathbb{X}_s$, we have
$$
\io u(-\Delta )^s\phi dx =\io \phi d\nu,
$$
where $\mathbb{X}_s$ is  given  in  Definition \ref{test}.
\end{definition}

As a consequence of the properties of the Green function, the authors in \cite{CV1} obtain the following regularity result.
\begin{Theorem}\label{key}
Suppose that $s\in (\frac 12, 1)$ and let $\nu \in \mathfrak{M}(\O)$, be a Radon measure such that
$$\int_\Omega \delta^{\beta}d\nu < \infty, \qquad \delta(x):= \textnormal{dist}(x, \Omega^{c}),$$ with $\beta \in [0, 2s-1)$.
Then the problem \eqref{eq:def} has a unique weak solution $u$ in the sense of Definition \ref{def1}  such that $u \in W^{1, q}_0(\Omega)$, for all $1\leq q <
p^{*}_{\beta}$ where $p_{*, \beta}:=\frac{N}{N-2s+1+\beta}.$ Moreover
\begin{equation}\label{dd}
||u||_{W^{1,q}_0(\Omega)}\le C(N,q,\O)\io \d^\beta d\nu.
\end{equation}
For $\nu \in L^1(\O)$, setting\; $T: L^1(\O)\to  W^{1,\theta}_0(\Omega)$, with $T(f)=u$, then $T$ is a compact operator.
\end{Theorem}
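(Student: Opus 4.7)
The plan is to realise $u$ through the Green kernel of $(-\Delta)^s$ with zero exterior Dirichlet condition in $\Omega$. Denote this kernel by $G_s(\cdot,\cdot)$ and set
$$u(x) = \int_\Omega G_s(x,y)\, d\nu(y), \qquad x \in \Omega.$$
I would first check that $u \in L^1(\Omega)$, using Fubini together with the integrated bound $\int_\Omega G_s(x,y)\, dx \le C\, \delta(y)^s$; since $\beta < 2s-1 < s$, the hypothesis $\int_\Omega \delta^\beta d\nu < \infty$ is easily absorbed. The fact that $u$ then solves \eqref{eq:def} in the sense of Definition \ref{def1} follows at once from the duality identity $\int_\Omega G_s(x,y)(-\Delta)^s\phi(x)\, dx = \phi(y)$, valid for every $\phi \in \mathbb{X}_s$.

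The heart of the proof is the pointwise gradient bound of Bogdan--Jakubowski \cite{BJ}, of the schematic form
$$|\nabla_x G_s(x,y)| \le C\, \frac{1}{|x-y|^{N-2s+1}}\, \min\!\Bigl\{1,\ \frac{\delta(y)^s}{|x-y|^s}\Bigr\},$$
which, together with the weight $\delta(y)^\beta$ coming from the hypothesis on $\nu$, is designed to land $|\nabla u|$ in the Marcinkiewicz space $L^{p_{*,\beta},\infty}(\Omega)$. One estimates the distribution function $|\{x \in \Omega : |\nabla u(x)| > t\}|$ by splitting $\int |\nabla_x G_s(x,y)|\, d\nu(y)$ over the near-diagonal region $\{|x-y|<\delta(y)\}$ and its complement, using the two regimes of the Bogdan--Jakubowski bound; the restriction $\beta < 2s-1$ is exactly what is needed so that the ensuing radial integrals converge. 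Marcinkiewicz interpolation then upgrades this into $|\nabla u| \in L^q(\Omega)$ for every $q < p_{*,\beta}$, with the quantitative bound \eqref{dd}. Since $u$ vanishes outside $\Omega$, this places $u$ in $W^{1,q}_0(\Omega)$.

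For uniqueness, the difference $w$ of two weak solutions satisfies $\int_\Omega w\,(-\Delta)^s\phi\, dx = 0$ for every $\phi \in \mathbb{X}_s$; choosing $\phi$ as the solution to $(-\Delta)^s\phi = \operatorname{sgn}(w)$ with zero exterior data, which lies in $\mathbb{X}_s$ by the boundary $\mathcal{C}^s$ regularity from \cite{RS}, forces $w \equiv 0$. For the compactness of $T : L^1(\Omega) \to W^{1,\theta}_0(\Omega)$, given $\theta < p_{*,0}$ I would pick $q$ with $\theta < q < p_{*,0}$; estimate \eqref{dd} with $\beta = 0$ gives that $T$ maps $L^1(\Omega)$ boundedly into $W^{1,q}_0(\Omega)$, and the compact embedding $W^{1,q}_0(\Omega) \hookrightarrow W^{1,\theta}_0(\Omega)$ (Rellich--Kondrachov applied to $\nabla T(f)$, together with the equi-integrability provided by the weak-$L^{p_{*,0}}$ bound) yields norm-convergence of a subsequence.

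The main obstacle is precisely the weighted Marcinkiewicz estimate for $\nabla u$: the two-regime Bogdan--Jakubowski bound must be handled carefully against $\delta^\beta d\nu$, since the boundary layer is what dictates the critical exponent $p_{*,\beta} = N/(N-2s+1+\beta)$ and the range $\beta \in [0,2s-1)$.
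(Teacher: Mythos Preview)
The paper does not give its own proof of this theorem: it is stated as a consequence of the Green-function analysis carried out in \cite{CV1} (see the sentence immediately preceding the statement), so there is nothing to compare against here. Your plan---represent $u$ via the Green kernel, feed in the Bogdan--Jakubowski pointwise gradient bound, and extract a weak-$L^{p_{*,\beta}}$ estimate for $\nabla u$ by splitting over $\{|x-y|<\delta(y)\}$ and its complement---is exactly the route taken in \cite{CV1} and \cite{AP}, and the uniqueness argument via the dual problem with $\operatorname{sgn}(w)$ datum is also standard.

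There is, however, a genuine gap in your compactness argument. The embedding $W^{1,q}_0(\Omega)\hookrightarrow W^{1,\theta}_0(\Omega)$ for $\theta<q$ is \emph{not} compact: Rellich--Kondrachov gives compactness of $W^{1,q}_0\hookrightarrow L^r$ for $r<q^*$, but says nothing about strong convergence of gradients. Boundedness of $(Tf_n)$ in $W^{1,q}_0$ plus equi-integrability of $(\nabla Tf_n)$ in $L^\theta$ (which you correctly get from the uniform weak-$L^{p_{*,0}}$ bound) is not enough by itself; you still need a.e.\ convergence of $\nabla Tf_n$ along a subsequence, and that does not follow from mere $L^1$-boundedness of $(f_n)$. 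One correct way to close this is to work directly with the kernel: the operator $f\mapsto \nabla Tf$ is the integral operator with kernel $\nabla_x G_s(x,y)$, and compactness from $L^1(\Omega)$ to $L^\theta(\Omega)^N$ follows once you check that $y\mapsto \nabla_x G_s(\cdot,y)$ is continuous into $L^\theta(\Omega)$ and bounded there (approximate the kernel by truncations, which give finite-rank operators). Alternatively, use a truncation-of-the-data argument: for bounded $(f_n)\subset L^1$, write $f_n=T_k(f_n)+(f_n-T_k(f_n))$; the first piece is bounded in $L^\infty$, so $T(T_k(f_n))$ lies in a better space where genuine compactness is available, while the second piece is small in $L^1$ uniformly in $n$ for $k$ large (by equi-integrability, after passing to a weakly-$L^1$ convergent subsequence via Dunford--Pettis, or simply using the linear estimate \eqref{dd}). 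Either route works, but the one-line appeal to a ``compact embedding $W^{1,q}_0\hookrightarrow W^{1,\theta}_0$'' does not.
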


 Related to $T_k(u)$ and for $s>\frac 12$, we have the next regularity result obtained in \cite{AP}.
\begin{Theorem}\label{key01}
Assume that $f\in L^1(\Omega)$ and define $u$ to be the unique weak solution to problem \eqref{eq:def}, then $T_k(u)\in W^{1,\a}_0(\Omega)\cap H^s_0(\Omega)$  for
any $\a<2s$, moreover $$ \io |\n T_k(u)|^{\a}\, dx\le Ck^{\a-1}||f||_{L^1(\O)}.
$$
\end{Theorem}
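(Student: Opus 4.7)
I would combine approximation with a nonlocal analogue of the classical Boccardo--Gallou\"et truncation argument. Set $f_n = T_n(f) \in L^\infty(\Omega)$ so that $f_n \to f$ in $L^1(\Omega)$, and let $u_n$ denote the solution associated to $f_n$. Theorem \ref{key} guarantees that $u_n$ is regular enough to be used as a test function and that $u_n \to u$ strongly in $W^{1,q}_0(\Omega)$ for every $q < p_*$, so the desired estimates on $T_k(u)$ will be obtained by passing to the limit in the corresponding estimates for $T_k(u_n)$.

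The fractional energy bound is the first ingredient. Testing $(-\Delta)^{s}u_n = f_n$ against $T_k(u_n)$, using the integration-by-parts identity for the fractional Laplacian, and applying the pointwise monotonicity $(a-b)(T_k(a) - T_k(b)) \ge (T_k(a) - T_k(b))^2$ yields
\begin{equation*}
[T_k(u_n)]^2_{H^s_0(\Omega)} \;\le\; \int_\Omega f_n\, T_k(u_n)\, dx \;\le\; k\,||f||_{L^1(\Omega)},
\end{equation*}
where $[\,\cdot\,]_{H^s_0}$ denotes the Gagliardo seminorm. This gives $T_k(u_n) \in H^s_0(\Omega)$ uniformly in $n$, and, by weak lower semicontinuity along the subsequence $u_n \to u$, the $H^s_0$-conclusion transfers to $T_k(u)$.

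The delicate part is the sharp $W^{1,\alpha}_0$-estimate with the exact $k^{\alpha-1}$ dependence. A direct application of Theorem \ref{key} to $u_n$ controls $\int |\nabla u_n|^{\alpha}$ only for $\alpha < p_*$, and since $p_* < 2s$ as soon as $N \ge 2$, the range $\alpha \in [p_*, 2s)$ has to be reached by exploiting the extra information $||T_k(u_n)||_{L^\infty} \le k$. I would therefore decompose the truncation into dyadic annular pieces of the form $T_{2^{-j}k}(u_n - T_{k(1-2^{-j})}(u_n))$, apply the energy estimate on each piece, and combine the resulting $H^s_0$-bounds with the Marcinkiewicz-type gradient estimate for $u_n$ coming from the Bogdan--Jakubowski bound $|\nabla_x G_s(x,y)| \le C|x-y|^{1-N-2s}$ on the Green function; summing a geometric series over $j$ should then produce exactly the factor $k^{\alpha-1}$. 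Passage to the limit $n \to \infty$ is routine once the uniform bounds are in hand. The main obstacle is precisely this gradient estimate: the fractional Gagliardo norm of $T_k(u_n)$ does not embed into $W^{1,\alpha}_0(\Omega)$ when $\alpha > p_*$, so one must genuinely invoke the PDE structure through the Green function and carefully track the $k$-scaling through each dyadic level set rather than rely on any functional-space embedding.
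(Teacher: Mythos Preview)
The paper does not prove Theorem~\ref{key01}; it is quoted verbatim from \cite{AP} as a known regularity result, so there is no in-paper argument to compare your proposal against. What follows evaluates the proposal on its own terms.

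Your treatment of the $H^s_0$-conclusion is correct and is the standard route: approximate by bounded data, test with $T_k(u_n)$, use the pointwise inequality $(a-b)(T_k(a)-T_k(b))\ge (T_k(a)-T_k(b))^2$, and pass to the limit by lower semicontinuity. This part is complete.

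The $W^{1,\alpha}_0$-estimate for $\alpha$ close to $2s$ is where your outline has a genuine gap. You correctly diagnose the difficulty: neither an embedding $H^s_0\hookrightarrow W^{1,\alpha}_0$ nor the global Marcinkiewicz bound $|\nabla u_n|\in L^{p_*,\infty}$ from the Green function reaches the range $\alpha\in[p_*,2s)$, so the PDE structure must be used more sharply. But the proposed remedy---``$H^s_0$-bounds on dyadic pieces combined with the Green-function gradient estimate, then sum a geometric series''---does not constitute an argument. The $H^s_0$-norm of each dyadic piece $T_{2^{-j}k}\big(u_n-T_{k(1-2^{-j})}(u_n)\big)$ is still only a fractional seminorm, and nothing in your plan explains how such a bound is converted into control of the \emph{classical} gradient of that piece in $L^\alpha$. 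The kernel bound you invoke (with a sign slip: the exponent is $2s-1-N$, not $1-N-2s$) controls $|\nabla u_n|$ pointwise, not $|\nabla T_k(u_n)|$, and restricting to the set $\{|u_n|<k\}$ does not by itself raise the integrability exponent above $p_*$. Your closing sentence essentially concedes this: you name the obstacle rather than remove it. A real proof must make quantitative use of the constraint $|u_n|<k$ \emph{inside} the representation formula---linking the size of $u_n(x)$ to the kernel behaviour---and until that link is written down explicitly the plan for the gradient estimate remains a sketch of what one hopes is true rather than a sketch of why it is true.
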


We recall also the next comparison principle proved in \cite{AP}
\begin{Theorem}\label{compa2}(Comparison Principle).
Let $g\in L^1(\O)$ and suppose that $w_1, w_2\in W^{1,q}_0(\O)$ for all $1 \leq q<\frac{N}{N-2s+1}$ are such that $(-\D)^s w_1, (-\D)^s w_2\in L^1(\O)$ with
$$\begin{array}{ll}
\begin{cases}(-\Delta)^s w_1 \leq H_1(x,w_1,\n w_1)  +g \hbox{ in }\Omega, \\
w_1 \leq 0 \inn \mathbb{R}^{N}\setminus\Omega,
\end{cases}

&

\begin{cases}
(-\Delta)^s w_2\geq H_1(x,w_2,\n w_2)+g\hbox{ in }\O, \\ w_2\leq 0\inn \mathbb{R}^{N}\setminus\Omega,
\end{cases}
\end{array}$$
\

where $H: \O\times \re\times \ren\to \re$ is a Carath\'eodoty function satisfying
\begin{enumerate}
\item $H_1(x,w_1,\n w_1), H_1(x,w_2,\n w_2)\in L^1(\O)$,
\item for a.e. $x\in \O$, we have
$$ H_1(x,w_1,\n w_1)-H_1(x,w_2,\n w_2)=\langle B(x,w_1,w_2, \nabla w_1, \nabla w_2), \n (w_1-w_2)\rangle +
    f(x,w_1,w_2)$$
with $B\in (L^{a}(\O))^N$, $a>\frac{N}{2s-1}$ and $f\in L^1(\O)$ with $f\le 0$ a.e.  in $\O$.
\end{enumerate}
Then $w_1\le w_2$ in $\O$.
\end{Theorem}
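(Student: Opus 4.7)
The plan is to set $w := w_1 - w_2$ and deduce $w_+ \equiv 0$ via a Stampacchia-type energy estimate, using the fractional Sobolev embedding to close the integrals. The role of the exponent condition $a > N/(2s-1)$ is to make the (otherwise tight) Hölder closure compatible with the threshold integrability of $\nabla w$. First, subtracting the two differential inequalities and invoking hypothesis (2), $w$ satisfies, distributionally,
\[(-\Delta)^s w \leq \langle B, \nabla w \rangle + f(x, w_1, w_2) \quad \text{in } \Omega,\]
with $w \leq 0$ on $\mathbb{R}^N \setminus \Omega$ and $f \leq 0$ a.e.\ in $\Omega$.

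Next, I would test this distributional inequality against the truncation $\phi_k := T_k(w_+)$, which is admissible in $H^s_0(\Omega)$ because $w_+$ vanishes on $\Omega^c$ and the regularity hypotheses on $w_1, w_2$, combined with Theorem \ref{key01}, place $T_k(w_i)$ in $W^{1,\alpha}_0(\Omega) \cap H^s_0(\Omega)$ for every $\alpha < 2s$. Using the pointwise algebraic inequality
\[(w(x) - w(y))\bigl(\phi_k(x) - \phi_k(y)\bigr) \geq \bigl(\phi_k(x) - \phi_k(y)\bigr)^2,\]
which holds since $t \mapsto T_k(t_+)$ is a $1$-Lipschitz nondecreasing contraction fixing $0$, together with the sign of $f$, I arrive at
\[\|\phi_k\|_{H^s_0(\Omega)}^2 \leq \int_\Omega \langle B, \nabla w \rangle \phi_k \, dx,\]
where the integrand on the right is supported in $\{w > 0\}$ and there $\nabla w = \nabla w_+$.

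The final step is a Hölder–Sobolev closure: Hölder with exponents $(a, q, r)$ satisfying $1/a + 1/q + 1/r = 1$, $q$ close to $p_* = N/(N-2s+1)$, and $r$ close to the Sobolev target $2N/(N-2s)$, followed by the embedding $H^s_0(\Omega) \hookrightarrow L^{2N/(N-2s)}$, would yield
\[\|\phi_k\|_{H^s_0(\Omega)} \leq C\,\|B\|_{L^a(\{w>0\})}\,\|\nabla w\|_{L^q(\{w>0\})}.\]
\textbf{The main obstacle} is that the three exponents stand at their critical values (the condition $a > N/(2s-1)$ is precisely the slack that keeps the Hölder closure feasible), and the resulting constant is not a priori small. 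To promote the bound into the conclusion $w_+ \equiv 0$, I would then replace $T_k$ by the dual truncation $G_k(w_+) = (w_+ - k)_+$, supported in $A_k = \{w > k\}$, and run a De Giorgi-type iteration: as $|A_k|$ shrinks, the product $\|B\|_{L^a(A_k)}\|\nabla w\|_{L^q(A_k)}$ shrinks by absolute continuity of the Lebesgue integral, and the iteration can be made geometric all the way down to the level $k = 0$. A cleaner alternative would be to regularize $B$ to $B_\varepsilon \in L^\infty$, apply the comparison principle for the smooth-drift operator $(-\Delta)^s - \langle B_\varepsilon, \nabla\cdot\rangle$, and pass to the limit $\varepsilon \to 0$ by stability of distributional subsolutions.
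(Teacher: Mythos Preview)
The paper does not prove this theorem here; it is quoted from \cite{AP}, so there is no in-paper argument to compare against line by line. That said, your main route --- the $H^s_0$ energy estimate closed by H\"older and Sobolev --- has a genuine exponent obstruction.

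You propose exponents $(a,q,r)$ with $\tfrac{1}{a}+\tfrac{1}{q}+\tfrac{1}{r}=1$, $q$ near $p_*=\frac{N}{N-2s+1}$, and $r$ near $2^*_s=\frac{2N}{N-2s}$. But the limiting sum is
\[
\frac{2s-1}{N}+\frac{N-2s+1}{N}+\frac{N-2s}{2N}=1+\frac{N-2s}{2N}>1,
\]
so no admissible triple exists once $N\ge 2$. The hypothesis $a>\frac{N}{2s-1}=p_*'$ is precisely what makes $|B|\,|\nabla w|$ land in $L^1(\Omega)$, with \emph{no} slack left for a third factor in the Sobolev range; it is not ``the slack that keeps the closure feasible''. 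Swapping $\nabla w$ for $\nabla T_k(w_+)\in L^\alpha$, $\alpha<2s$, via Theorem~\ref{key01} does not rescue the count (the same computation then forces $N\le 2s$), and the De~Giorgi variant with $G_k$ inherits the identical defect: the failure is in the H\"older step itself, not in the size of the level sets, so shrinking $|A_k|$ by absolute continuity does not help.

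The approach in \cite{AP} is not an energy closure; it relies on the drift Green function of Bogdan--Jakubowski \cite{BJ}, the same machinery invoked in the proof of Theorem~\ref{non} in this paper. From Kato's inequality for the fractional Laplacian, $(-\Delta)^s w_+\le \chi_{\{w>0\}}\bigl[\langle B,\nabla w\rangle+f\bigr]\le \langle B,\nabla w_+\rangle$, so $w_+$ is a nonnegative subsolution of the linear operator $L=(-\Delta)^s-\langle B,\nabla\cdot\rangle$ with zero exterior data. The condition $a>\frac{N}{2s-1}$ places $B$ in the Kato class $\mathcal{K}^s_N(\Omega)$ of \cite{BJ}, whence $L$ has a Green function $\hat{\mathcal G}_s\simeq \mathcal G_s>0$, and the weak maximum principle for $L$ forces $w_+\equiv 0$. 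Your ``cleaner alternative'' of smoothing $B$ is pointed in this direction, but as written it is only a gesture: you would still need either the \cite{BJ} estimates, or uniform bounds on the adjoint solutions, to pass to the limit.
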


Recall that we are considering problem \eqref{boundary problem}, then we have the next definition.

\begin{definition}A function $u \in L^{1}(\Omega)$, with $|\nabla u|^{p}\in L^{1}_{loc}(\Omega)$, is a distributional solution to problem \eqref{boundary problem}
if for any $\phi \in \mathbb{X}_s(\Omega)$, there holds
$$\int_\Omega u(-\Delta)^{s}\phi + \int_\Omega\phi |\nabla u|^{p} = \int_\Omega f\phi,$$and $u=0$ in $\mathbb{R}^{N}\setminus \Omega$.
\end{definition}

We denote by $G_s$ the Green kernel of $(-\Delta)^{s}$ in $\Omega$ and  by  $\mathbb{G}_s[\cdot]$ the associated Green operator defined by
$$\mathbb{G}_s[f](x):=\int_\Omega G_s(x, y)df(y).$$
See \cite{BJ} for the estimates of the Green  function.

\begin{definition}\label{strong}A function $u: \Omega \to \mathbb{R}$ is a strong solution to the equation
$$  (-\Delta)^{s}w + |\nabla w|^{p} =f$$ in  $\Omega$ if $u \in \mathcal{C}^{2s+ \alpha}(\Omega)$, for some $\alpha >  0$ and $$  (-\Delta)^{s}u(x) + |\nabla
u(x)|^{p} =f(x)$$ for every $x$ in $\Omega$.

\end{definition}

The other class of solutions that we shall consider is the class of viscosity solutions.  Unlike the distributional scenario, the notion of viscosity solutions
requires the punctual evaluation of the equation using appropriate test functions that touch the solution from above or below.


 \begin{definition}\label{defi viscosity}
An upper semicontinuous function $u: \mathbb{R}^{N} \to \mathbb{R}$ is a viscosity subsolution to \eqref{boundary problem} in $\Omega$, if $u \in
L_{loc}(\mathbb{R}^{N})$, and for any open set $U \subset \Omega$, any $x_0 \in U$ and any $\phi \in \mathcal{C}^{2}(U)$ such that $u(x_0)=\phi(x_0)$ and $\phi
\geq u$ in $U$, if we let

\begin{equation}\label{function v}
 v(x):= \left\lbrace
  \begin{array}{l}
  \phi(x)  \text{ in } U\\
    u(x)  \text{ outside  }U, \\
  \end{array}
  \right.
\end{equation}we have
$$(-\Delta)^{s}\phi(x_0) + |\nabla \phi(x_0)|^{p} \leq f(x_0),$$ and $v\leq 0$ in $\mathbb{R}^{N}\setminus \Omega$. On the other hand, a lower semicontinuous
function $u: \mathbb{R}^{N} \to \mathbb{R}$ is a viscosity supersolution to \eqref{boundary problem} in $\Omega$ if $u \in L_{loc}(\mathbb{R}^{N})$, and  for any
open set $U \subset \Omega$, any $x_0 \in U$ and any $\psi \in \mathcal{C}^{2}(U)$ such that $u(x_0)=\psi(x_0)$ and $\phi \leq u$ in $U$, if we define $v$ as
\begin{equation}\label{function vv}
 v(x):= \left\lbrace
  \begin{array}{l}
  \psi(x) \text{ in } U\\
    u(x)  \text{ outside  }U, \\
  \end{array}
  \right.
\end{equation}there holds
$$(-\Delta)^{s}\psi(x_0) + |\nabla \psi(x_0)|^{p} \geq f(x_0)$$and $v\geq 0$ in $\mathbb{R}^{N}\setminus \Omega$. Finally, a viscosity solution to \eqref{boundary
problem} is a continuous function which is both a subsolution and a supersolution to \eqref{boundary problem}.
\end{definition}

To end this section, we prove the next non existence result that justifies in some way the condition $p<\frac{1}{1-s}$ that we will be used later.
\begin{theorem}\label{non}
Assume that $p>\frac{2s-1}{1-s}N+1$, then for all $0\lvertneqq f\in L^\infty(\O)$, problem \eqref{boundary problem} has no weak solution $u$ in the sense of Definition \ref{def1},  such that $u\in
W^{1,p}_0(\O)$.
\end{theorem}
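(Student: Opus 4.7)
\emph{Proof plan.} I argue by contradiction. Suppose a nonnegative weak solution $u\in W^{1,p}_0(\Omega)$ of \eqref{boundary problem} exists. Introducing the drift $B(x):=|\nabla u(x)|^{p-2}\nabla u(x)$ (set to $0$ where $\nabla u=0$), one has $B\cdot \nabla u=|\nabla u|^{p}$, so that $u$ solves the linear fractional problem
\begin{equation*}
(-\Delta)^{s}u+B\cdot \nabla u=f\quad\mbox{in }\Omega,\qquad u=0\quad\mbox{in }\mathbb{R}^{N}\setminus\Omega.
\end{equation*}
From $u\in W^{1,p}_{0}(\Omega)$ one gets $|B|=|\nabla u|^{p-1}\in L^{p/(p-1)}(\Omega)$; the quantitative assumption $p>\frac{2s-1}{1-s}N+1$ is designed to force $|B|$ into the Kato-type class governing the Bogdan--Jakubowski Green function estimates \cite{BJ}. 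Assuming this, one obtains the two-sided comparison $G^{B}_{s}(x,y)\asymp G_{s}(x,y)$ for the Green function of $(-\Delta)^{s}+B\cdot \nabla$ in $\Omega$ against that of the fractional Laplacian, and therefore
\begin{equation*}
u(x)=\int_{\Omega}G^{B}_{s}(x,y)f(y)\,dy\geq c\,\mathbb{G}_{s}[f](x)\geq c_{0}\,\delta(x)^{s},\qquad x\in\Omega,
\end{equation*}
where the last inequality uses the classical two-sided bounds on $G_{s}$ in the $\mathcal{C}^{2}$ domain $\Omega$ applied to the nontrivial bounded source $f$.

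Combining this lower bound with the Hardy inequality of Proposition \ref{hardydy} and $u\in W^{1,p}_0(\Omega)$,
\begin{equation*}
c_{0}^{p}\int_{\Omega}\delta(x)^{(s-1)p}\,dx\leq \int_{\Omega}\frac{u^{p}}{\delta^{p}}\,dx\leq \frac{1}{C(\Omega)}\int_{\Omega}|\nabla u|^{p}\,dx<+\infty.
\end{equation*}
Since $\Omega$ is a bounded $\mathcal{C}^{2}$ domain, the leftmost integral converges if and only if $(1-s)p<1$. But the hypothesis rewrites as $p(1-s)>(2s-1)N+(1-s)$, and in the relevant regime $(2s-1)N\geq s$ this forces $p(1-s)\geq 1$, the desired contradiction.

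The principal technical obstacle is the verification that the drift $B$ actually lies in the Kato-class hypothesis of \cite{BJ}: the bare bound $|B|\in L^{p/(p-1)}(\Omega)$ degenerates towards $L^{1}$ for the large values of $p$ considered here and is not by itself sufficient. I expect to bridge this gap by a bootstrap argument: using the equation to write $(-\Delta)^{s}u=f-|\nabla u|^{p}\in L^{1}(\Omega)$, and then repeatedly invoking Theorem \ref{th AP} and the weighted estimate on $|\nabla u|\,\delta^{1-s}$ to progressively upgrade the integrability of $\nabla u$, and hence of $B$, until the threshold dictated by the assumption $p>\frac{2s-1}{1-s}N+1$ is reached and the Green function comparison becomes legitimate.
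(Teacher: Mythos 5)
You follow the same route as the paper: rewrite the equation as $(-\Delta)^s u+\langle B,\nabla u\rangle=f$ with $B=|\nabla u|^{p-2}\nabla u$, invoke the Bogdan--Jakubowski comparability of the Green function $\hat{\mathcal{G}}_s$ of the drift operator with $\mathcal{G}_s$ to obtain $u(x)\ge C\,\delta^s(x)\int_\Omega f\,\delta\,dy>0$, and conclude via the Hardy inequality that $\delta^{-p(1-s)}\in L^1(\Omega)$, contradicting $p(1-s)\ge 1$. This is exactly the published argument, down to the last step (where, as you correctly note, $p(1-s)\ge 1$ requires the regime $(2s-1)N\ge s$, a restriction the paper leaves implicit).

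The genuine gap is the one you flag yourself: membership of $B$ in the Kato class $\mathcal{K}^s_N$ of \cite{BJ}. The paper deals with this step in one line, claiming that $p>\frac{2s-1}{1-s}N+1$ yields $|B|\in L^\sigma(\Omega)$ with $\sigma>\frac{N}{2s-1}$; your suspicion is well founded, since from $u\in W^{1,p}_0(\Omega)$ alone one only gets $|B|=|\nabla u|^{p-1}\in L^{p'}(\Omega)$, and $p'>\frac{N}{2s-1}$ is equivalent to $p<p_*$, whereas the hypothesis forces $p>p_*$. However, the bootstrap you propose cannot close this gap: the equation only gives $(-\Delta)^s u=f-|\nabla u|^p\in L^1(\Omega)$, and with an $L^1$ right-hand side Theorem \ref{th AP} and Proposition \ref{key2-locc} cap the integrability of $\nabla u$ at exponents below $p_*$, i.e.\ no better (indeed weaker, for large $p$) than the assumed $W^{1,p}_0$ bound, and the source term never improves beyond $L^1$ as long as you only know $|\nabla u|^p\in L^1$, so iteration gains nothing. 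What the Kato criterion demands is $|\nabla u|^{p-1}\in L^\sigma$ with $\sigma>\frac{N}{2s-1}$, i.e.\ $\nabla u\in L^{(p-1)\sigma}$ with $(p-1)\sigma>p$ once $p>p_*$: strictly more gradient integrability than the hypothesis supplies, which no bootstrap starting from an $L^1$ source can manufacture. So you have located the crux correctly, but your plan for filling it does not work; be aware also that the paper's own proof obtains this integrability by direct assertion from the size of $p$, so completing the argument rigorously requires either a different sufficient condition for the Kato class or a genuinely different route at this point.
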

\begin{proof}
Suppose by contradiction that problem \eqref{boundary problem} has a solution $u$ with $u\in W^{1,p}_0(\O)$. It is clear that $u$ solves the problem
$$
(-\D)^s u +\langle B(x),\n u\rangle =f,
$$
where $B(x)=|\n u|^{p-2}\n u$. Since $p>\frac{2s-1}{1-s}N+1$, then $|B|\in L^\s(\O)$ with $\s>\frac{N}{2s-1}$ and then $B\in \mathcal{K}^{s}_N(\O)$ the Kato class
of function defined  by formula (30) in \cite{BJ}. Thus
$$
u(x)=\io \hat{\mathcal{G}}_s(x,y)f(y)dy,
$$
where $\hat{\mathcal{G}}_s$ is the Green function associated to the operator $(-\D)^s +B(x)\n$. From the result of \cite{BJ}, we know that
$\hat{\mathcal{G}}_s\simeq \mathcal{G}_s$, the Green function associated to the fractional laplacian. Hence
$$
\mathcal{G}_s(x,y)\simeq C(B)\frac{1}{|x-y|^{N-2s}}\bigg(\frac{\d^s(x)}{|x-y|^{s}}\wedge 1\bigg) \bigg(\frac{\d^s(y)}{|x-y|^{s}}\wedge 1\bigg).
$$
Using the fact that $\dfrac{\d^s(x)}{|x-y|^{s}}\ge C(\O)\d^s(x)$, we reach that
$$
u(x)\ge C(B) \d^s(x)\io f(y) \d(y)\,dy.
$$
Therefore, using the Hardy inequality we deduce that
$$
\frac{\d^{sp}}{\d^p}\le C\frac{u^p}{\d^p}\in L^1(\O).
$$
Thus $\dfrac{1}{\d^{p(1-s)}}\in L^1(\O)$. Since $p(1-s)\ge 1$, then we reach a contradiction.
\end{proof}

\begin{corollary}
Let $f$ be a Lipschitz function such that $f\gneqq 0$, then if $p>\frac{1}{1-s}$, problem \eqref{boundary problem} has no solution $u$ such that $u\in
\mathcal{C}^1(\O)$ with $|\n u|\in L^p(\O)$.
\end{corollary}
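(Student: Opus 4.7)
Plan: I would argue by contradiction, closely following the strategy of Theorem \ref{non}. Suppose that $u\in\mathcal{C}^1(\O)$ with $|\n u|\in L^p(\O)$ solves \eqref{boundary problem}. Since $u=0$ in $\ren\setminus\O$, the $L^p$-integrability of $|\n u|$ gives $u\in W^{1,p}_0(\O)$. Writing the equation in drift form
\[
(-\D)^s u + \langle B(x),\n u\rangle = f,\qquad B(x):=|\n u|^{p-2}\n u,
\]
the $\mathcal{C}^1(\O)$-regularity of $u$ renders $B$ continuous on $\O$.

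The next step is to reproduce the lower bound obtained in Theorem \ref{non}. To this end I would verify that $B$ belongs to the Kato class $\mathcal{K}^s_N(\O)$ of \cite{BJ}, so that, by the Bogdan--Jakubowski estimates, the Green function $\hat{\mathcal{G}}_s$ of $(-\D)^s+\langle B,\n\rangle$ is comparable to the Green function $\mathcal{G}_s$ of $(-\D)^s$. Combining the representation $u(x)=\io \hat{\mathcal{G}}_s(x,y)f(y)\,dy$ with the estimate $\d^s(x)/|x-y|^s\ge C(\O)\d^s(x)$ then yields, exactly as in the proof of Theorem \ref{non},
\[
u(x)\ge C'\,\d^s(x),\qquad x\in\O,
\]
with $C'>0$ because $f\gneqq 0$.

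The argument is completed by applying the Hardy inequality (Proposition \ref{hardydy}) to $u\in W^{1,p}_0(\O)$:
\[
\io \d^{-p(1-s)}\,dx \le C\io \frac{u^p}{\d^p}\,dx \le C\io |\n u|^p\,dx<\infty.
\]
Since $\O$ is bounded and of class $\mathcal{C}^2$, the integral on the left is finite only if $p(1-s)<1$, contradicting the hypothesis $p>1/(1-s)$.

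The delicate step is the justification of $\hat{\mathcal{G}}_s\simeq \mathcal{G}_s$ under the weaker condition on $p$. In Theorem \ref{non} this is handled by the inclusion $B\in L^\sigma(\O)$ with $\sigma>N/(2s-1)$, a consequence of the assumption $p>\frac{(2s-1)N}{1-s}+1$; in the present corollary this $L^\sigma$-control need not hold from $|\n u|\in L^p(\O)$ alone. I expect the continuity of $B$ coming from $u\in\mathcal{C}^1(\O)$, together with the boundary regularity of $u$ provided by the Lipschitz character of $f$ via standard fractional-Laplacian estimates, to suffice to place $B$ in $\mathcal{K}^s_N(\O)$ and thereby enable the Bogdan--Jakubowski comparison.
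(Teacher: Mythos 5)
Your plan reproduces, step by step, the argument the paper uses for Theorem \ref{non} (drift form with $B=|\n u|^{p-2}\n u$, the Bogdan--Jakubowski comparison $\hat{\mathcal{G}}_s\simeq\mathcal{G}_s$ to get $u\ge C'\d^s$, then the Hardy inequality of Proposition \ref{hardydy}), which is indeed the route the corollary is meant to follow. But the single step you leave as an expectation --- that $B\in\mathcal{K}^s_N(\O)$ --- is exactly the point where the corollary's weaker hypothesis $p>\frac{1}{1-s}$ has to be compensated by its extra assumptions, and your heuristic for it does not hold up. First, $u\in\mathcal{C}^1(\O)$ only makes $B$ continuous \emph{inside} $\O$; the Kato condition is a smallness condition on $\sup_x\int_{\{|x-y|<r\}}|B(y)|\,|x-y|^{-(N-2s+1)}\,dy$ uniform in $x$, and the critical region is a neighbourhood of $\p\O$, where interior continuity gives no information at all. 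Second, the integrability you actually have is $|B|=|\n u|^{p-1}\in L^{p'}(\O)$, and in the corollary's range this can never feed the H\"older criterion used in Theorem \ref{non}: $p>\frac{1}{1-s}$ forces $p'<\frac1s<\frac{1}{2s-1}\le\frac{N}{2s-1}$, so the condition ``$|B|\in L^\s(\O)$ with $\s>\frac{N}{2s-1}$'' is out of reach from $|\n u|\in L^p(\O)$ alone. Third, the proposed rescue --- boundary regularity of $u$ coming from the Lipschitz character of $f$ ``via standard fractional-Laplacian estimates'' --- is circular: up-to-the-boundary estimates for $(-\D)^s u=f-|\n u|^p$ require control of $|\n u|^p$ near $\p\O$ beyond $L^1$, which is precisely what is missing and what would, if available, place $B$ in the Kato class. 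So the lower bound $u\ge C'\d^s$, the heart of the proof, is not established; you need a genuine argument controlling $|\n u|$ (equivalently $B$) near $\p\O$, or an alternative route to $u\gtrsim\d^s$ (for instance a Hopf-type barrier for $(-\D)^s+\langle B,\n\cdot\rangle$ valid for drifts that are merely $L^{p'}$ near the boundary), before the Hardy step can be run.

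A secondary point: the Hardy inequality is applied to $u$ in $W^{1,p}_0(\O)$, but the corollary only assumes $u\in\mathcal{C}^1(\O)$ with $|\n u|\in L^p(\O)$ and $u\equiv0$ outside $\O$; to conclude $u\in W^{1,p}_0(\O)$ you must rule out a jump of $u$ across $\p\O$ (i.e. show that $u$ attains zero boundary values in the Sobolev sense, so that $u\in W^{1,p}(\ren)$ with no surface contribution to $\n u$). This is a much lighter issue than the Kato-class step, but it should be addressed rather than asserted.
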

\begin{remark}
It is clear that the above result makes a significative difference with the local case and the general existence result proved in \cite{PPL} for Lipschitz function. We conjecture that the non existence result holds at least for all $p>\frac{1}{s-1}$ as in the case of gradient reaction term.
\end{remark}

\section{Existence results.}\label{exis}

\subsection{The problem with natural growth in the gradient: $p<2s$.}\label{natural}
In this section we consider the case of natural growth in the gradient, namely we will assume that $p<2s$. Then using truncating arguments, we are able to show the
existence of a solution to problem  \eqref{boundary problem} for a large class of data. We also treat the case where a linear reaction term appears in
\eqref{boundary problem}.

In the case where $p<p_*$, then for more regular  data $f$, we can show that the solution is in effect a classical solution.
 \begin{Theorem}\label{regularity and order int f1}
 Let $f \in L^{m}(\Omega)$ with $m \geq 1$, and assume that $1 <p < p_{*}$. Then, the Dirichlet problem
  \begin{eqnarray*}
 \left\lbrace
  \begin{array}{l}
    (-\Delta)^{s}w+| \nabla w|^{p}=  f\quad  \text{ in } \Omega\\
  \qquad \,\,\,\quad \qquad \quad w=0 \quad \text{ in } \mathbb{R}^{N}\setminus \Omega, \\
  \end{array}
  \right.
\end{eqnarray*}
has a unique distributional solution $w$ verifying
  \begin{itemize}
 \item if $m < \frac{N}{2s-1}$, then $|\nabla w| \in L^{q}_{loc}(\Omega)$ for all $1\leq q < \frac{mN}{N-m(2s-1)}$;
 \item if $m= \frac{N}{2s-1}$, then $|\nabla w| \in L^{q}_{loc}(\Omega)$ for all $1 \leq q<\infty$;
 \item if $m > \frac{N}{2s-1}$, then $|\nabla w |\in \mathcal{C}^{\alpha}(\Omega)$ for some $\alpha \in (0,1)$.
\end{itemize}
Moreover, if in addition $f \in \mathcal{C}^{\epsilon}(\Omega)$, for some $\epsilon \in (0, 2s-1)$, then the $\mathcal{C}^{1, \alpha}$ distributional solution is
a strong solution.
\end{Theorem}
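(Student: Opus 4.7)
The plan is to combine the existence statement already available in the literature with a bootstrap argument based on the linear regularity theory recalled in Theorem~\ref{th AP} and Proposition~\ref{key2-locc}. Since $1<p<p_{*}$ and $f\in L^{m}(\Omega)\subset L^{1}(\Omega)$, the result from \cite{CV} quoted in the Introduction furnishes a distributional solution $w\in W^{1,q}_0(\Omega)$ for every $1\le q<p_{*}$. For uniqueness, given two such solutions $w_{1},w_{2}$, I write
\[
|\nabla w_{1}|^{p}-|\nabla w_{2}|^{p}=\Big\langle\int_{0}^{1}p\,|\nabla w_{t}|^{p-2}\nabla w_{t}\,dt,\ \nabla(w_{1}-w_{2})\Big\rangle,
\]
with $\nabla w_{t}:=t\nabla w_{1}+(1-t)\nabla w_{2}$. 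Once the bootstrap below shows $|\nabla w_{i}|\in L^{a}(\Omega)$ for some $a$ with $a/(p-1)>N/(2s-1)$, the drift $B(x):=\int_{0}^{1}p\,|\nabla w_{t}|^{p-2}\nabla w_{t}\,dt$ lies in $L^{a/(p-1)}(\Omega)$, and Theorem~\ref{compa2} applies in both directions to yield $w_{1}\equiv w_{2}$.

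The heart of the argument is a bootstrap on the integrability of the gradient. Starting from any fixed $q_{0}<p_{*}$ I set $r_{k}:=\min\{m,q_{k}/p\}$ and rewrite the equation as $(-\Delta)^{s}w=f-|\nabla w|^{p}$. The right-hand side then lies in $L^{r_{k}}_{loc}(\Omega)\cap L^{1}(\Omega)$, so Proposition~\ref{key2-locc} forces $|\nabla w|\in L^{q_{k+1}}_{loc}(\Omega)$ with
\[
q_{k+1}<\frac{r_{k}N}{N-r_{k}(2s-1)}
\]
whenever $r_{k}<N/(2s-1)$, and with $q_{k+1}$ arbitrarily large otherwise. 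A direct calculation shows that the iteration $q\mapsto (q/p)N/(N-(q/p)(2s-1))$ is strictly increasing exactly when $q>N(p-1)/(2s-1)$, and the strict inequality $p<p_{*}$ is equivalent to $N(p-1)/(2s-1)<p_{*}$; so one may choose $q_{0}$ in the nonempty interval $(N(p-1)/(2s-1),\,p_{*})$ and the sequence $q_{k}$ genuinely increases. In finitely many steps either $q_{k}$ escapes above $pm$, after which the exponent $r_{k}$ locks at $m$ and one more application of Proposition~\ref{key2-locc} (or Theorem~\ref{th AP}, for the $\mathcal{C}^{\alpha}$ alternative) delivers exactly the three regimes stated in the theorem.

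For the last clause, once $m>N/(2s-1)$ the bootstrap places $|\nabla w|$ in $\mathcal{C}^{\alpha}(\Omega)$, so $|\nabla w|^{p}$ is Hölder continuous of some exponent $\alpha'>0$. Combining with $f\in\mathcal{C}^{\epsilon}(\Omega)$ yields $f-|\nabla w|^{p}\in\mathcal{C}^{\min(\alpha',\epsilon)}(\Omega)$, and the interior Schauder theory for $(-\Delta)^{s}$ used in \cite{CV1} promotes $w$ to $\mathcal{C}^{2s+\gamma}(\Omega)$ for some $\gamma>0$. Consequently $w$ solves the equation pointwise in $\Omega$ and is a strong solution in the sense of Definition~\ref{strong}.

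The delicate point I expect is the bookkeeping inside the bootstrap: one has to verify that the exponents $q_{k}$ strictly increase at each step without overshooting a forbidden threshold, and that the local constants coming from Proposition~\ref{key2-locc} remain uniform on a fixed exhaustion $\Omega_{1}\subset\subset\Omega'_{1}\subset\subset\Omega$. Both facts rely on the strict subcriticality $p<p_{*}$, which is what keeps the quotient $q/p$ in the range where the linear regularity theorem produces a real integrability gain instead of a break-even.
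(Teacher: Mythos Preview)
Your approach is essentially the same as the paper's: existence from \cite{CV}, a bootstrap via Proposition~\ref{key2-locc} applied to $(-\Delta)^{s}w=f-|\nabla w|^{p}$, and then fractional Schauder theory for the strong-solution clause. Two minor remarks: for uniqueness you do not need to wait for the bootstrap, since the global regularity $w\in W^{1,q}_{0}(\Omega)$ for all $q<p_{*}$ already gives $|\nabla w|^{p-1}\in L^{\sigma}(\Omega)$ with $\sigma>N/(2s-1)$ (this is exactly the computation $N(p-1)/(2s-1)<p_{*}$ you carried out), so Theorem~\ref{compa2} applies immediately; and for the final clause the paper invokes \cite{RS} (Proposition~2.3 and Corollary~2.4) rather than \cite{CV1}, together with the observation that $w\in\mathbb{X}_{s}$ so the integration-by-parts formula upgrades the distributional identity to a pointwise one.
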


\begin{proof}
It is clear that the existence and the uniqueness follow using \cite{CV} and \cite{AP}, however, the regularity in the local Sobolev space follows using
Proposition \ref{key2-locc}. Notice that, in this case $|\n u|^{p-1}\in L^\s(\O)$ with $\s>\frac{N}{2s-1}$ and then we can iterate the local regularity result in
Proposition \ref{key2-locc} to deduce that $|\n u|\in L^\theta_{loc}(\O)$ for all $\theta>0$. Hence $|\n u|\in \mathcal{C}^{a}(\O)$ for some $a<1$.

Now, assume that $f \in \mathcal{C}^{\epsilon}(\Omega)$, and let $\Omega' \Subset \Omega$, open and let $u$ be a distributional solution to problem
\eqref{boundary problem}. Since $u \in L^{\infty}(\mathbb{R}^{N})$ and $f-|\nabla u|^{p} \in L^{\infty}(\Omega')$, we apply Proposition 2.3 in \cite{RS} to
derive
\begin{equation*}
u \in \mathcal{C}^{\beta}(\Omega''), \quad \textnormal{ for all }\beta \in (0, 2s), \, \Omega'' \Subset \Omega'.
\end{equation*}In particular,  we have $|\nabla u| \in \mathcal{C}^{\beta-1}(\Omega'')$ for any $\beta \in (1, 2s)$. Consequently, $f- |\nabla u|^{p} \in
\mathcal{C}^{\epsilon}(\Omega'')$. Appealing now to Corollary 2.4 in \cite{RS}, we obtain $u \in \mathcal{C}^{2s+\epsilon}$ in a smaller subdomain of $\Omega''$.
Thus, $u \in \mathcal{C}^{2s+\epsilon}$ locally in $\Omega$.

We prove that $u$ is a strong solution. Since the term $f-|\nabla u|^{p}$ is $\mathcal{C}^{\epsilon}$ in $\Omega$, and then, by appropriate extension, in
$\overline{\Omega}$, we deduce from \cite[Lemma 2.1(ii)]{CV1} that $u \in \mathbb{X}_s$. Hence the integration by parts formula
$$\int_\Omega u(-\Delta)^{s}\phi = \int_\Omega\phi(-\Delta)^{s}u $$holds for all $\phi \in \mathbb{X}_s$. For any $\phi \in \mathcal{C}^{\infty}_0(\Omega)$ we
hence obtain
\begin{equation*}
\int_\Omega\phi(-\Delta)^{s}u  = \int_\Omega u(-\Delta)^{s}\phi = \int_\Omega f\phi - \int_\Omega |\nabla u|^{p}\phi.
\end{equation*}Therefore
$$(-\Delta)^{s}u(x)=f(x)-|\nabla u(x)|^{p}$$for almost everywhere $x$ in $\Omega$. By continuity, it holds in the full set $\Omega$.
\end{proof}

\begin{remark}
Observe that the reasoning employed to prove the above result gives the precise way in which the function $f$ transfers its regularity to a solution $u$. Indeed,
if $f \in \mathcal{C}^{2ns+\epsilon -n}$ locally in $\Omega$, for $\epsilon \in (0, 2s-1)$ and $n \geq 0$, then $u \in \mathcal{C}^{2(n+1)s + \epsilon -n}$
locally in $\Omega$.
\end{remark}

\subsection{The case $p_*\le p<2s$ with general datum}\label{ex-gen}
In this subsection we will assume that $p_*\le p<2s$, then the first existence result for problem \eqref{boundary problem} is the following.
\begin{Theorem}\label{exist1}
Assume that $p<2s$, then for all $f\in L^1(\Omega)$ with $f\ge 0$, the problem \eqref{boundary problem} has a maximal weak solution $u$ such that $u\in
W^{1,p}_0(\O)$ and $T_k(u)\in W^{1,\a}_0(\Omega)\cap H^s_0(\Omega)$  for any $1<\a<2s$ and for all $k>0$.
\end{Theorem}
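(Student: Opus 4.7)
The strategy is a monotone approximation scheme: truncate $f$, solve the regularised problem, derive uniform estimates, and pass to the limit. The key point is that the absorption character of the gradient term allows the bound $\|\nabla u_n\|_{L^p(\Omega)}^p\le \|f\|_{L^1(\Omega)}$ to be obtained by testing against $T_k(u_n)/k$ and invoking the Stroock--Varopoulos inequality.

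First, I would set $f_n:=T_n(f)\in L^\infty(\Omega)$ and construct a non-negative $\mathcal{C}^{1,\alpha}$ solution $u_n$ to
\begin{equation*}
(-\Delta)^s u_n+|\nabla u_n|^p=f_n \inn \Omega,\qquad u_n=0 \inn \ren\setminus\Omega.
\end{equation*}
When $p<p_*$, this is provided directly by Theorem \ref{regularity and order int f1}. In the complementary range $p_*\le p<2s$, I would run a Schauder fixed point argument on the map $w\mapsto v$ defined by $(-\Delta)^s v=f_n-|\nabla w|^p$, $v=0$ in $\ren\setminus\Omega$, using that $\mathbb{G}_s[f_n]\in\mathcal{C}^{1,\alpha}(\Omega)$ is a bounded super-solution (Theorem \ref{key}) to produce an invariant ball in $\mathcal{C}^1(\overline{\Omega})$. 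The comparison principle (Theorem \ref{compa2}), applied with drift $B=|\nabla u_n|^{p-2}\nabla u_n\in L^\infty$, then yields monotonicity $u_n\le u_{n+1}$ and the bound $u_n\le\mathbb{G}_s[f]\in L^1(\Omega)$, so $u_n\nearrow u$ pointwise with $u\in L^1(\Omega)$.

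Next, the a priori estimates. Using $T_k(u_n)/k\in\mathbb{X}_s$ as test function (admissible since $u_n$ is $\mathcal{C}^{1,\alpha}$ and vanishes outside $\Omega$), the Stroock--Varopoulos inequality $\int_\Omega u_n(-\Delta)^s(T_k(u_n)/k)\,dx\ge 0$ (valid because $t\mapsto T_k(t)/k$ is non-decreasing with $T_k(0)=0$) gives
\begin{equation*}
\int_\Omega |\nabla u_n|^p\,\frac{T_k(u_n)}{k}\,dx\le \int_\Omega f_n\,\frac{T_k(u_n)}{k}\,dx\le \|f\|_{L^1(\Omega)}.
\end{equation*}
Letting $k\to\infty$, the ratio $T_k(u_n)/k$ converges boundedly to the indicator of $\{u_n>0\}$; since $u_n\ge 0$ is $\mathcal{C}^1$ and attains its minimum on $\{u_n=0\}$ (so $\nabla u_n=0$ there), Fatou's lemma furnishes $\|\nabla u_n\|_{L^p(\Omega)}^p\le \|f\|_{L^1(\Omega)}$. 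Testing instead with $T_k(u_n)$ and using the stronger form $\int u_n(-\Delta)^s T_k(u_n)\ge \|(-\Delta)^{s/2}T_k(u_n)\|_{L^2}^2$ produces the $H^s_0$-bound $\|T_k(u_n)\|_{H^s_0}^2\le k\|f\|_{L^1}$. Finally, since $(-\Delta)^s u_n=f_n-|\nabla u_n|^p$ is uniformly bounded in $L^1(\Omega)$, Theorem \ref{key01} delivers $\|\nabla T_k(u_n)\|_{L^\alpha}^\alpha\le C k^{\alpha-1}$ for each $1<\alpha<2s$.

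The hardest step is the passage to the limit. Weak compactness gives $u_n\rightharpoonup u$ in $W^{1,p}_0(\Omega)$ and $T_k(u_n)\rightharpoonup T_k(u)$ in $W^{1,\alpha}_0\cap H^s_0$, and all lower-semicontinuous bounds transfer to $u$. The non-trivial point---and the main obstacle---is identifying the weak limit of the nonlinear term $|\nabla u_n|^p$, which requires almost-everywhere convergence of the gradients. I would establish this by a non-local Boccardo--Murat-type argument: testing the equation against non-negative cut-offs such as $T_\ell((u_n-T_k(u))^+)$ and combining the Stroock--Varopoulos coercivity of $(-\Delta)^s$ with the absorption sign of the gradient term, one deduces strong convergence $\nabla T_k(u_n)\to\nabla T_k(u)$ in $L^p$ on each truncation level; a uniform tail estimate, obtained by localising the first test to $\{u_n\ge k\}$, then provides equi-integrability of $\{|\nabla u_n|^p\}$, so Vitali's theorem yields $|\nabla u_n|^p\to|\nabla u|^p$ in $L^1(\Omega)$. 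The limit $u$ is thereby a weak solution of \eqref{boundary problem}, and the fact that it is the monotone limit of an approximating sequence of sub-solutions singles it out as the maximal weak solution in the class considered.
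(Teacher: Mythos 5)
Your scheme breaks down at its very first step in the range $p_{*}\le p<2s$. You construct the approximations $u_n$ by solving $(-\Delta)^{s}u_n+|\nabla u_n|^{p}=T_n(f)$ with the \emph{full} gradient nonlinearity, invoking Theorem \ref{regularity and order int f1} for $p<p_{*}$ and, for $p_{*}\le p<2s$, a Schauder fixed point on the map $w\mapsto v$, $(-\Delta)^{s}v=T_n(f)-|\nabla w|^{p}$, on an invariant ball of $\mathcal{C}^{1}(\overline{\Omega})$. Two problems: first, solutions of the fractional Dirichlet problem behave like $\delta^{s}$ near $\partial\Omega$, so $|\nabla v|\sim\delta^{s-1}$ blows up and the natural estimate is the weighted one of Theorem \ref{th AP}; an invariant ball in $\mathcal{C}^{1}(\overline{\Omega})$ is not available. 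Second, and more fundamentally, the self-map estimate in any such space has the form $C\big(\|T_n(f)\|+R^{p}\big)\le R$, which (since $p>1$) forces a smallness condition on the datum — this is exactly the content of Theorem \ref{thh} and cannot be met for large $n$. The absorption sign does not rescue the naive fixed point. This is precisely why the paper does not solve the full-growth problem with bounded data at all: it truncates the nonlinearity, replacing $|\nabla u|^{p}$ by $|\nabla u|^{p}/(1+\tfrac1n|\nabla u|^{p})$, and runs a double approximation (in $k$ on the datum, then in $n$ on the nonlinearity), where existence for each approximating problem needs only Theorem \ref{key} and the comparison principle \ref{compa2} with a \emph{bounded} drift. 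A related technical slip: even granting your $u_n$, the drift $B=|\nabla u_n|^{p-2}\nabla u_n$ is not in $L^{\infty}(\Omega)$ (it inherits the $\delta^{(s-1)(p-1)}$ boundary blow-up), so the hypothesis $B\in (L^{a})^{N}$, $a>N/(2s-1)$, of Theorem \ref{compa2} must be checked and does not hold for all admissible $s,p$.

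The second genuine gap is the maximality claim. In your scheme the data $T_n(f)$ increase towards $f$, so each $u_n$ is a \emph{subsolution} of \eqref{boundary problem}, and comparison (where applicable) gives $u_n\le\hat u$ for any solution $\hat u$; the monotone limit of such a sequence is, if anything, a \emph{minimal} solution. Your closing sentence, that being a monotone limit of subsolutions ``singles it out as the maximal weak solution,'' is not an argument and points in the wrong direction. The paper gets maximality because its approximation relaxes the absorption term: since $|\xi|^{p}/(1+\tfrac1n|\xi|^{p})\le|\xi|^{p}$, every solution $\hat u$ of \eqref{boundary problem} is a subsolution of the $n$-th approximating problem, hence $\hat u\le u_n$ for every $n$, and the decreasing limit $u$ dominates all solutions. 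The remaining ingredients of your proposal (the $T_k(u_n)/k$ and $T_k(u_n)$ tests, Theorem \ref{key01} for the truncations, equi-integrability of $|\nabla u_n|^{p}$ plus Vitali) are in the spirit of the paper, except that the a.e.\ convergence of gradients is obtained there directly from the compactness of the Green operator in Theorem \ref{key} applied to $h_n=f-g_n$ bounded in $L^{1}(\Omega)$, rather than through a Boccardo--Murat-type argument, which you only sketch.
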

\begin{proof}
We divide the proof into two steps.

{\bf The first step:} We show for a fixed positive integer $n\in \ene^*$, the problem
\begin{equation}\label{aprox001} \left\{
\begin{array}{rcll}
(-\Delta)^s u_n +\dfrac{|\n u_n|^p}{1+\frac 1n |\n u_n|^p} &=& f & \text{   in }\Omega , \\  u_n &=& 0 &\hbox{  in }\mathbb{R}^N\setminus\Omega.
\end{array} \right.
\end{equation}
has a unique solution $u_n$ such that $u_n\in W^{1,q}_0(\O)$ for all $1\le q<\frac{N}{N-2s+1}$ and $T_k(u_n)\in H^s_0(\O)$. To prove that, we proceed by approximation.

Let $k\in \ene^*$ and define $u_{n,k}$ to be the unique solution to the approximating problem
\begin{equation}\label{aprox1k} \left\{
\begin{array}{rcll}
(-\Delta)^s u_{n,k} +\dfrac{|\n u_{n,k}|^p}{1+\frac 1n |\n u_{n,k}|^p} &=& f_k & \text{   in }\Omega , \\  u_{n,k} &=& 0 &\hbox{  in }\mathbb{R}^N\setminus\Omega.
\end{array} \right.
\end{equation}
where $f_k=T_k(f)$. We claim that the sequence $\{u_{n,k}\}_k$ is increasing in $k$, namely $u_{n,k}\le u_{n,k+1}$ for all $k\ge 1$ and $n$ fixed. To see that, we have
$$
(-\D)^s u_{n,k+1} + \dfrac{|\n u_{n,k+1}|^p}{1+\frac 1n |\n u_{n,k+1}|^p}\ge f_k.
$$
Thus $u_{n,k+1}$ is a supersolution the problem solved by $u_{n,k}$. Setting $H(x,s,\xi)= -\dfrac{|\xi|^p}{1+\frac 1n |\xi|^p}$, then
by the comparison principle in Theorem \ref{compa2}, it follows that $u_{n,k}\le u_{n,k+1}$ and then the claim follows. It is clear that $u_{n,k}\le w$ for all $n, k\in \ene^*$ where $w$ is the unique solution to
problem
\begin{equation}\label{inter} \left\{
\begin{array}{rcll}
(-\Delta)^s w &= & f &\text{   in }\Omega , \\  w &=& 0 &\hbox{  in } \mathbb{R}^N\setminus\Omega.
\end{array} \right.
\end{equation}
Notice that $w\in W^{1,q}_0(\O)$ for all $1\le q<\frac{N}{N-2s+1}$ and $w\in L^r(\O)$ for all $1\le r<\frac{N}{N-2s}$.

Hence, we get the existence of $u_n$ such that $u_{n,k}\uparrow u_n$ strongly in $L^{\s}(\O)$ for all $1\le \s<\frac{N}{N-2s}$.

For $n$ fixed, we set $h_{n,k}:=f_{k}-\dfrac{|\n u_{n,k}|^p}{1+\frac 1n |\n u_{n,k}|^p}$, then $|h_{n,k}|\le f+n$. Thus
$||h_{n,k}||_{L^1(\O)}\le ||f||_{L^1(\O)}+n|\O|$. Hence using the compactness result in Theorem \ref{key} we deduce that
up to a subsequence, $u_{n,k}\to u_n$ strongly in $W^{1,\a}_0(\O)$ for all $\a<\frac{N}{N-2s+1}$. Since the sequence $\{u_{k,n}\}_k$ is increasing in $k$, then the limit $u_n$ is unique. Thus, up to a further subsequence, $\n
u_{n,k}\to \n u_n$ a.e. in $\O$. Hence using the dominated convergence Theorem it holds that
$$
\dfrac{|\n u_{n,k}|^p}{1+\frac 1n |\n u_{n,k}|^p}\to \dfrac{|\n u_{n}|^p}{1+\frac 1n |\n u_{n}|^p}\mbox{  strongly in }L^a(\O)\mbox{  for all }a<\infty.
$$
Hence $u_n$ solves the problem \eqref{aprox001}. To proof the uniqueness of $u_n$, we assume that $v_n$ is another solution to problem \eqref{aprox001}, then
$$
(-\D)^s(u_n-v_n)=\hat{H}(|\n u_n|)-\hat{H}(|\n v_n|),
$$
where $\hat{H}(|\xi|)=-\dfrac{|\xi|^p}{1+\frac 1n |\xi|^p}$. Since $|\hat{H}(|\xi|)|\le n$, then we obtain that $u_n-v_n\in L^\infty(\O)$. Finally using the comparison principle in  Theorem \ref{compa2} it follows that $u_n=v_n$ and then we conclude.

{\bf Second step:} Consider the sequence $\{u_n\}_n$ obtained in the first step, then we know that $u_n\le w$ for all $n$. We claim that $u_n$ is decreasing in $n$. Recall that $u_n$ is the unique solution to the problem
\begin{equation}\label{aprox1} \left\{
\begin{array}{rcll}
(-\Delta)^s u_n +\dfrac{|\n u_n|^p}{1+\frac 1n |\n u_n|^p} &=& f & \text{   in }\Omega , \\  u_n &=& 0 &\hbox{  in }\mathbb{R}^N\setminus\Omega.
\end{array} \right.
\end{equation}
Thus
$$
(-\Delta)^s u_n +\dfrac{|\n u_n|^p}{1+\frac 1{n+1} |\n u_n|^p}\ge f.
$$
Hence $u_n$ is a supersolution to the problem solved by $u_{n+1}$. As a consequence and using the comparison principle in Theorem \ref{compa2}, it follows that $u_{n+1}\le u_{n}\le w$ for all $n$.

Hence,  there exists $u$ such that $u_n\downarrow u$ strongly in $L^{\s}(\O)$ for all $1\le \s<\frac{N}{N-2s}$.

We set $g_n(|\n u_n|)=\dfrac{|\n u_n|^p}{1+\frac 1n |\n u_n|^p}$, and let $j>0$, using $T_j(u_n)$ as a test function in \eqref{aprox1} it follows that
$$
\dyle \iint_{D_\O}\frac{(T_{j}(u_n(x))-T_{j}(u_n(y)))^2}{|x-y|^{N+2s}}\, dxdy +\io g_n(|\n u_n|) T_j(u_n) dx \leq Cj\dyle.
$$
Hence $\{T_j(u_n)\}_n$ is bounded in  $H^s_0(\Omega)$ for all $j>0$ and then, up to a subsequence, we have $T_j(u)\rightharpoonup T_j(u)$ weakly in $H^s_0(\Omega)$.
We claim that $\{g_n\}_n$ is bounded in $L^1(\O)$. To see that, we fix $\e>0$ and we use $v_{n,\e}=\frac{u_n}{\e+u_n}$ as a test function in \eqref{aprox1}. It is
clear that $v_{n,\e}\le 1$, then taking into consideration that
$$
(u_n(x)-u_n(y))(v_{n,\e}(x)-v_{n,\e}(y))\ge 0,
$$
it follows that
$$
\io g_n(|\n u_n|)v_{n,\e}(x)\le \io f dx\le C.
$$
Letting $\e\to 0$, we reach that $\io g_n(|\n u_n|)dx\le C$ an the claim follows. Define $h_n=f-g_n$, then $||h_n||_{L^1(\O)}\le C$. As a consequence and by the
compactness result in Theorem \ref{key}, we reach that, up to a subsequence, $u_n\to u$ strongly in $W^{1,\a}_0(\O)$ for all $1\le \a<\frac{N}{N-2s+1}$ and then,  up to an other subsequence, $\n
u_n\to \n u$ a.e in $\O$. Hence $g_n\to g$ a.e. in $\O$ where $g(x)=|\n u|^p$. Since $p<2s$, then by Theorem \ref{key01} and using Vitali Lemma we conclude
that
$$
T_k(u_n)\to T_k(u)\mbox{  strongly in } W^{1,\s}_0(\O) \mbox{  for all }\s<2s.
$$
In particular
\begin{equation}\label{conv1}
T_k(u_n)\to T_k(u)\mbox{  strongly in } W^{1,p}_0(\O).
\end{equation}
Hence to get the existence result we have just to show that $g_n\to g$ strongly in $L^1(\O)$.

Notice that, using $T_1(G_j(u_n))$ as a test function in \eqref{aprox1} it holds that
$$
\int_{u_n\ge j+1}g_ndx\le \int_{u_n\ge j}f dx\to 0\mbox{ as  }j\to \infty.
$$
Let $\e>0$ and consider $E\subset \O$ to be a measurable set, then
\begin{eqnarray*}
\dyle \int_E g_n dx &= & \int_{\{E\cap \{u_n<j+1\}\}} g_n dx +\int_{\{E\cap \{u_n\ge j+1\}\}} g_n dx\\ &\le & \dyle \int_{\{E\cap \{u_n<j+1\}\}} |\n
T_{j+1}(u_n)|^p dx + \int_{\{u_n\ge j+1\}} fdx.
\end{eqnarray*}
By \eqref{conv1}, letting $n\to \infty$, we can chose $|E|$ small enough such that
$$
\limsup_{n\to \infty}\int_{\{E\cap \{u_n<j+1\}\}} |\n T_{j+1}(u_n)|^p dx\le \frac{\e}{2}.
$$
In the same way and since $f\in L^1(\O)$, we reach that
$$
\limsup_{n\to \infty}\int_{\{u_n\ge j+1\}} fdx\le \frac{\e}{2}.
$$
Hence, for $|E|$ small enough, we have
$$
\limsup_{n\to \infty}\int_E g_n dx\le \e.
$$
Thus by Vitali lemma we obtain that $g_n\to g$ strongly in $L^1(\O)$. Therefore we conclude that $u$ is a solution to problem \eqref{boundary problem}.

If $\hat{u}$ is an other solution to \eqref{boundary problem}, then
$$
(-\Delta)^s \hat{u} +\dfrac{|\n \hat{u}|^p}{1+\frac 1n |\n \hat{u}|^p}\le f.
$$
Hence $\hat{u}\le u_n$ and then $\hat{u}\le u$.
\end{proof}
\begin{remark}

\

\begin{enumerate}
\item The existence of a unique solution to the approximating problem \eqref{aprox1} holds for all $p\ge 1$.
\item Problem of uniqueness of solution to problem \eqref{boundary problem} is an interesting open problem including for the local case $s=1$ where partial results are known in the case $1<p<\frac{N}{N-1}$ or $p=2$.

\item As a consequence of the previous result and
    following closely the same argument we can prove that for all $p<2s$, for all $a>0$ and for all $(f,g)\in L^1(\O)\times L^1(\O)$ with $f,g\gneqq 0$, the
    problem \begin{equation}\label{aprox12} \left\{
\begin{array}{rcll}
(-\Delta)^s u+|\n u|^p &=& g(x)\dfrac{u}{1+a u}+ f & \text{   in }\Omega , \\  u &=& 0 &\hbox{  in }\mathbb{R}^N\setminus\Omega.
\end{array} \right.
\end{equation}
has a positive solution $u$.
\end{enumerate}
\end{remark}

In the case where the datum $f$ is substituted by a Radon measure $\nu$, existence of solutions holds for all $1< p<p_*$ as it was proved in \cite{CV}. However, if $p\ge p_*$, then the situation changes completely as in the local case, and, additional hypotheses on $\nu$ related to a fractional capacity $\text{Cap}_{\s,p}$ are needed, with $\s<1$.

The fractional capacity $\text{Cap}_{\s,p}$ is defined as follow.

For a compact set $K\subset \O$, we define
\begin{equation}\label{compact}
\text{Cap}_{\s,p}(K) =\inf \left\{\|\psi\|_{W^{\s,p}_0(\O)} : \psi \in W^{\s,p}_0(\O), 0\le \psi\le 1 \mbox{  and  }
\ \psi \geq \chi_{K} \ {\rm a.e.\ in}\quad
\O\right\}.
\end{equation}
Now, if $U\subset \O$ is an open set, then
\begin{equation*}\label{open2}
\text{Cap}_{\s,p}(U) =\sup \left\{\text{Cap}_{\s,p}(K): K\subset U \mbox{  compact of } \O \mbox{  with  }K\subset U\right\}.
\end{equation*}
For  any borel subset $B\subset \O$, the definition is
extended by setting:
\begin{equation*}
\text{Cap}_{\s,p}(B)= \inf\left\{\text{Cap}_{\s,p}(U),\ U\mbox{ open subset of
$\O$, }B \subset U\right\}.
\end{equation*}
Notice that, using Sobolev inequality, we obtain that if $\text{Cap}_{\s,p}(A)=0$ for some set $A\subset \subset \O$, then $|A|=0$.
We refer to \cite{War} for the main properties of this capacity.

To show that the situation changes for the set of general Radon measure, we prove the next non existence result.
\begin{Theorem}\label{mes}
Assume that $p>p_*$, $\frac 12<s<1$ and let $x_0\in \O$, then the problem
\begin{equation}\label{delta}\left\{
\begin{array}{rcll}
(-\Delta)^s u + |\grad u|^{p}&=&\d_{x_0} & \text{   in }\Omega ,
\\  u &=& 0 &\hbox{  in }\mathbb{R}^N\setminus\Omega,
\end{array} \right.
\end{equation}
has non solution $u$ such that $u\in W^{1,p}_0(\O)$.
\end{Theorem}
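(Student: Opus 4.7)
The plan is to argue by contradiction. Assume $u \in W^{1,p}_0(\Omega)$ is a distributional solution of \eqref{delta}. Then $|\nabla u|^p \in L^1(\Omega)$, so the equation rewrites as $(-\Delta)^s u = \delta_{x_0} - |\nabla u|^p$ in $\Omega$, where the right-hand side is a bounded signed Radon measure. Defining $w := \mathbb{G}_s[|\nabla u|^p]$, one checks that $u+w$ solves $(-\Delta)^s(u+w) = \delta_{x_0}$ in $\Omega$ with vanishing exterior data; by uniqueness of the fundamental solution this forces
\begin{equation*}
u = G_s(\cdot,x_0) - w \qquad \text{in }\Omega.
\end{equation*}

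From Theorem \ref{key} applied to the $L^1$-source $|\nabla u|^p$, the corrector satisfies $w \in W^{1,q}_0(\Omega)$ for every $1 \le q < p_*$. On the other hand, the sharp two-sided estimates of Bogdan--Jakubowski \cite{BJ} yield $G_s(x,x_0) \asymp |x-x_0|^{-(N-2s)}$ and $|\nabla_x G_s(x,x_0)| \asymp |x-x_0|^{-(N-2s+1)}$ as $x\to x_0$, so that $\nabla G_s(\cdot,x_0) \notin L^p(\Omega)$ precisely when $p \ge p_*$. The strategy is to transfer this singular behaviour of $\nabla G_s(\cdot,x_0)$ into $\nabla u$ and contradict $\nabla u \in L^p(\Omega)$.

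To localize the argument near $x_0$, fix a small radius $\rho>0$ and split $|\nabla u|^p = |\nabla u|^p\chi_{B_\rho(x_0)} + |\nabla u|^p\chi_{\Omega\setminus B_\rho(x_0)}$, giving a corresponding decomposition $w = w_1 + w_2$. The piece $w_2$ is the $(-\Delta)^s$-potential of a source supported away from $x_0$, so by the smoothing properties of $\mathbb{G}_s$ and the interior regularity of $G_s$, the gradient $\nabla w_2$ remains bounded in $B_{\rho/2}(x_0)$ by a constant depending only on $\rho$. For $w_1$, absolute continuity of the $L^1$-integral gives $\||\nabla u|^p\chi_{B_\rho(x_0)}\|_{L^1(\Omega)} \to 0$ as $\rho\to 0$, and Theorem \ref{key} then yields $\|\nabla w_1\|_{L^q(\Omega)} \to 0$ for each fixed $q < p_*$.

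A Chebyshev argument at a carefully chosen scale $\rho$ produces a subset $E_\rho \subset B_\rho(x_0)$ of positive measure on which $|\nabla w(x)| \le \tfrac12 |\nabla G_s(x,x_0)|$; then on $E_\rho$ one has $|\nabla u(x)| \ge \tfrac12 |\nabla G_s(x,x_0)| \ge c\,|x-x_0|^{-(N-2s+1)}$. Integrating the $p$-th power contradicts $\nabla u \in L^p(\Omega)$, since $p(N-2s+1) \ge N$ makes $\int_0^\rho r^{-p(N-2s+1)}r^{N-1}\,dr$ divergent. The main obstacle will be exactly this step: upgrading the $L^q$-smallness of $\nabla w_1$ (only for $q<p_*$) to a pointwise domination by the pointwise lower bound of $|\nabla G_s(\cdot,x_0)|$ on a set of positive measure, which requires comparing the scale of the Green kernel singularity at $x_0$ with the integrated $L^1$-mass of $|\nabla u|^p$ concentrated near $x_0$.
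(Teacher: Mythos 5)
Your outline has a genuine gap at exactly the step you flag, and the tools you invoke cannot close it. After the splitting $w=w_1+w_2$, the only quantitative control you have on $\nabla w_1$ is strong $L^q$-smallness for $q<p_*$ (Theorem \ref{th AP}/Theorem \ref{key}). Run the Chebyshev argument on a dyadic annulus $A_j=\{2^{-j-1}\rho\le |x-x_0|\le 2^{-j}\rho\}$ against the threshold $\lambda_j\simeq (2^{-j}\rho)^{-(N-2s+1)}$ dictated by the conjectured lower bound for $|\nabla G_s(\cdot,x_0)|$: one gets $|\{|\nabla w_1|>\lambda_j\}|\le \lambda_j^{-q}\|\nabla w_1\|_{L^q}^q\simeq (2^{-j}\rho)^{q(N-2s+1)}\|\nabla w_1\|_{L^q}^q$, while $|A_j|\simeq (2^{-j}\rho)^N$. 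Since $q<p_*$ means $q(N-2s+1)-N<0$, the ratio of the exceptional set to $|A_j|$ blows up as $j\to\infty$ for every fixed $\rho$; so precisely on the annuli that matter (those accumulating at $x_0$, the only ones whose contribution can make $\int|\nabla u|^p$ diverge) you cannot produce any good set. The smallness of $\|\nabla w_1\|_{L^q}$ as $\rho\to 0$ does not help, because the loss is in $j$ at fixed $\rho$. To salvage the scheme you would need an endpoint estimate not contained in the cited results, e.g. an interior weak-type bound $\nabla\mathbb{G}_s:L^1\to M^{p_*}$ (Marcinkiewicz), which makes the bad-set/annulus ratio uniformly small in $j$ once $\||\nabla u|^p\|_{L^1(B_\rho)}$ is small. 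In addition, your argument uses a pointwise two-sided bound $|\nabla_x G_s(x,x_0)|\simeq |x-x_0|^{-(N-2s+1)}$ near the pole; \cite{BJ} only gives upper bounds of the type $|\nabla_x G_s(x,y)|\le C\,G_s(x,y)/\min(|x-y|,\d(x))$, so the lower bound (needed both for $\nabla G_s(\cdot,x_0)\notin L^p$ and for the transfer to $\nabla u$) would have to be established separately, say by comparison with the fundamental solution. As written, the decisive step is therefore missing.

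For comparison, the paper's proof avoids the Green representation altogether: it shows that $u\in W^{1,p}_0(\O)$ forces $(-\Delta)^s u\in W^{-\s,p}(\O)$ for every $\s\in(2s-1,2s)$, hence $\d_{x_0}=(-\Delta)^su+|\nabla u|^p\in L^1(\O)+W^{-\s,p}(\O)$, and any such object is absolutely continuous with respect to $\text{Cap}_{\s,p'}$; since $p>p_*$ one can pick $\s_0\in(2s-1,2s)$ with $p'\s_0<N$ and verify $\text{Cap}_{\s_0,p'}(\{x_0\})=0$, contradicting the fact that the Dirac mass charges the point. If you want to pursue your route, you should either prove the weak-$L^{p_*}$ gradient estimate and the pointwise lower bound indicated above, or switch to the duality--capacity argument.
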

\begin{proof}
For simplify of tipping we assume that $x_0=0\in \O$ and we write $\d$ for $\d_0$. We follow closely the argument used in \cite{BP}. Assume by contradiction that for some $p>p_*$, problem \eqref{delta} has a solution $u\in W^{1,p}_0(\O)$. Then $u\in W^{\s,p}_0(\O)$ for all $\s<1$. We claim that $(-\Delta)^s u \in W^{-\s,p}(\O)$, the dual space of $W^{\s,p}_0(\O)$, for all $\s\in (2s-1,2s)$. To see that, we consider $\phi\in \mathcal{C}^\infty_0(\O)$, then
\begin{eqnarray*}
|\io (-\Delta)^s u \phi dx| &\le & \iint_{\re^{2N}} \dfrac{|u(x)-u(y)||\phi(x)-\phi(y)|}{|x-y|^{N+2s}}\,dxdy\\
&\le & \bigg(\iint_{\re^{2N}} \dfrac{|u(x)-u(y)|^p}{|x-y|^{N+p(2s-\s)}}\,dxdy\bigg)^{\frac{1}{p}}  \bigg(\iint_{\re^{2N}} \dfrac{|\phi(x)-\phi(y)|^{p'}}{|x-y|^{N+p' \s}}\,dxdy\bigg)^{\frac{1}{p'}}.
\end{eqnarray*}
Since $2s-\s\in (0,1)$, then $\dyle\bigg(\iint_{\re^{2N}} \dfrac{|u(x)-u(y)|^p}{|x-y|^{N+p(2s-\s)}}\,dxdy\bigg)^{\frac{1}{p}} \le C(\s,s,N,\O)||u||_{W^{1,p}_0(\O)}$. Thus
$$
|\io (-\Delta)^s u \phi dx| \le C ||u||_{W^{1,p}_0(\O)} ||\phi||_{W^{\s,p'}_0(\O)},
$$
and then the claim follows. Hence going back to problem \eqref{delta}, we deduce that $\d\in L^1(\O)+W^{-\s,p}(\O)$.

As in \cite{BGO}, let us now show that if $\nu\in W^{-\s,p}(\O)$, then $\nu << \text{Cap}_{\s,p'}$.  Notice that, if in addition, $\nu$ is nonnegative, then we can prove that
\begin{equation*}\label{contr}
\nu (A)\le C (\text{Cap}_{\s,p'}(A))^{\frac 1p},
\end{equation*}
and we deduce easily that $\nu << \text{Cap}_{\s,p'}$. Here we give the proof without the positivity assumption on $\nu$.

Let $A\subset\subset \O$ be such that $\text{Cap}_{\s,p'}(A)=0$, then  there exists a Borel set $A_0$ such that $A\subset A_0$ and $\text{Cap}_{\s,p'}(A_0)=0$. Let $K\subset A_0$ be a compact set, then there exists a sequence $\{\psi_n\}_n\in \mathcal{C}^\infty_0(\O)$ such that $0\le \psi_n\le 1$, $\psi_n\ge \chi_K$ and $||\psi_n||^{p'}_{W^{\s,p'}_0(\O)}\to 0$ as $n\to \infty$. It is clear that $\psi_n\to \chi_K$ a.e in $\O$, as $n\to \infty$. Hence
$$
\nu (K)=\lim_{n\to \infty}\int \psi_n d\nu=\lim_{n\to \infty}\langle \psi_n, \nu\rangle_{W^{\s,p'}_0(\O), W^{-\s,p}_0(\O)}.
$$
Thus
$$
|\nu (K)|\le \limsup_{n\to \infty}|\langle \psi_n, \nu\rangle_{W^{\s,p'}_0(\O), W^{-\s,p}_0(\O)}|\le
 \limsup_{n\to \infty} ||\nu||_{W^{-\s,p}_0(\O)} ||\psi_n||_{W^{\s,p'}_0(\O)}=0.
$$
Therefore, we conclude that for any compact set $K\subset A_0$, we have $|\nu (K)|=0$. Hence $|\nu (A_0)|=0$ and the result follows.

Notice that if $h\in L^1(\O)$, then $|h|<< \text{Cap}_{\s,p'}$. As a conclusion, we deduce that $\d<< \text{Cap}_{\s,p'}$ for all $\s\in (2s-1,2s)$.

Since $p>p_*$,  we can choose $\s_0\in (2s-1,2s)$ such that $p'\s_0<N$. To end the proof, we have just to show that $\text{Cap}_{\s_0,p'}\{0\}=0$. Without loss of generality, we can assume that $\O=B_1(0)$. Since $\s p'<N$, setting $w(x)=(\dfrac{1}{|x|^\a}-1)_+$ with $0<\a<\frac{N-\s_0 p'}{p'}$, we obtain that $w\in W^{\s,p'}_0(\O)$.
Notice that, for all $v\in W^{\s,p'}_0(\O)$, we know that
$$
\text{Cap}_{\s,p'}\{|v|\ge k\}\le \frac{C}{k}||v||_{W^{\s,p'}_0(\O)}.
$$
Since $w(0)=\infty$, then $\{0\}\subset \{|w|\ge k\}$ for all $k>0$. Thus
$$
\text{Cap}_{\s,p'}\{0\}\le \frac{C}{k}||w||_{W^{\s,p'}_0(\O)}\mbox{  for all }k.
$$
Letting $k\to \infty$, it holds that $\text{Cap}_{\s,p'}\{0\}=0$ and the result follows.
\end{proof}

As a direct consequence of the above Theorem we obtain that for $p>p_*$, to get the existence of a solution to problem \eqref{boundary problem} with measure data $\nu$, then necessarily $\nu$ is continuous with respect to the capacity $\text{Cap}_{\s,p}$ for all $\s\in (2s-1,2s)$.

\

\

Let consider now the next problem
\begin{equation}\label{eq:prob}\left\{
\begin{array}{rcll}
(-\Delta)^s u + |\grad u|^{p}&=&\l g(x)u+f & \text{   in }\Omega ,
\\  u &=& 0 &\hbox{  in }\mathbb{R}^N\setminus\Omega.
\end{array} \right.
\end{equation}
with $g\gneqq 0$ and $\lambda >0$. As in local case studied in \cite{APP}, we can show that under natural conditions on $q$ and $g$, the problem \eqref{eq:prob} has a solution for
all $\l>0$. Moreover, the gradient term $|\nabla u|^q$ produces a strong regularizing effect on the problem and kills any effect of the linear term $\l g u$.

Before stating the main existence result for problem \eqref{eq:prob}, let us begin by the next definition.

\begin{definition}\textit{Let $g$ be a nonnegative measurable function such that $g\in L^1(\O)$. We say that $g$ is an {\it  admissible weight} if
\begin{equation}\label{eq:C}
C(g,p)=\inf\limits_{\phi\in W_{0}^{1,p}(\O)\setminus\{0\}}\dfrac{\left(\dint_\Omega\,|\grad \phi|^{p}\,dx\right)^{\frac{1}{p}}}{\dint_\Omega\,g|\phi|\,dx}>0.
\end{equation}}
\end{definition}

\noindent $\bullet$ If $g \in L^{\frac{p N}{N(p-1)+p}}(\Omega)$ with $g\gneqq 0$, then using the Sobolev inequality in the space $W^{1,p}_0(\O)$, it holds that $g$ satisfies \eqref{eq:C}.

\noindent $\bullet$ If $p<N$ and $g(x)=\frac{1}{|x|^\s}$ with $\s<1+\frac{N}{p'}$, then using the Hardy-Sobolev inequality in the space $W^{1,p}_0(\O)$, we deduce that $g$ satisfies \eqref{eq:C}.

\

Now, we are able to state the next result.
\begin{Theorem}\label{exist12}
Assume that $1<p<2s$ and suppose that $g$ is an admissible weight in the sense given in \eqref{eq:C}. Then for all $f\in L^1(\Omega)$ with $f\ge 0$ and for all
$\l>0$, the problem \eqref{eq:prob} has a  solution $u$ such that $u\in W^{1,p}_0(\O)$ and $T_k(u)\in W^{1,\a}_0(\Omega)\cap H^s_0(\Omega)$  for any $1<\a<2s$ and for all
$k>0$.
\end{Theorem}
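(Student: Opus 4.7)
The plan is to imitate the approximation scheme in the proof of Theorem~\ref{exist1}, with the admissibility condition \eqref{eq:C} playing the role of a substitute for the missing barrier $w$ (which is no longer available because of the linear term $\l g u$). For each $n\in\ene^*$, I would consider the approximated problem
\begin{equation*}
\begin{cases}
(-\Delta)^s u_n + |\n u_n|^p = \l T_n(g(x))T_n(u_n) + T_n(f) & \text{in }\O, \\
u_n = 0 & \text{in }\ren\setminus\O,
\end{cases}
\end{equation*}
and obtain the existence of a nonnegative $u_n$ by a Schauder fixed-point argument: the map $v\mapsto u$, where $u$ solves the linearised equation obtained by replacing $T_n(u_n)$ on the right-hand side by $T_n(v)$, is well defined by Theorem~\ref{exist1} (the datum is bounded), continuous and compact by Theorem~\ref{key}, and admits an invariant ball in $L^r(\O)$ because the truncations yield a datum-bound independent of $v$. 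Comparison with $0$ via Theorem~\ref{compa2} gives $u_n\ge 0$.

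The heart of the proof is a uniform bound on $\{u_n\}_n$ in $W^{1,p}_0(\O)$. I would test the equation with $v_{n,\e}=u_n/(\e+u_n)$, exactly as in the proof of Theorem~\ref{exist1}. Since $v_{n,\e}$ is monotone in $u_n$ and vanishes outside $\O$, the nonlocal contribution $\io (-\Delta)^s u_n\, v_{n,\e}\,dx$ is nonnegative, and $v_{n,\e}\le 1$ gives
$$
\io |\n u_n|^p v_{n,\e}\,dx \le \l \io g\, u_n\,dx + ||f||_{L^1(\O)}.
$$
Letting $\e\to 0^+$ and invoking Fatou then produces $\io |\n u_n|^p\,dx \le \l \io g u_n\,dx + ||f||_{L^1(\O)}$. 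At this stage, the admissibility \eqref{eq:C} applied to $u_n\in W^{1,p}_0(\O)$ yields
$$
\io g\, u_n\,dx \le \frac{1}{C(g,p)}\Big(\io|\n u_n|^p\,dx\Big)^{1/p},
$$
so that $X_n := \big(\io|\n u_n|^p\,dx\big)^{1/p}$ satisfies $X_n^{\,p} \le \tfrac{\l}{C(g,p)} X_n + ||f||_{L^1(\O)}$. Because $p>1$, this inequality forces $X_n\le C_0$ with $C_0$ independent of $n$.

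Once this uniform $W^{1,p}_0$ estimate is in hand, the right-hand side $\l T_n(g)T_n(u_n)+T_n(f)$ is bounded in $L^1(\O)$ uniformly in $n$, and Theorem~\ref{key01} supplies a uniform bound for $\{T_k(u_n)\}_n$ in $W^{1,\a}_0(\O)\cap H^s_0(\O)$ for every $\a<2s$ and $k>0$. Theorem~\ref{key} then gives, up to a subsequence, strong convergence of $u_n$ in $W^{1,q}_0(\O)$ for $q<p_*$ and a.e.\ convergence of $\n u_n$. Since $p<2s$, a Vitali argument in the spirit of the end of the proof of Theorem~\ref{exist1} yields $|\n u_n|^p\to|\n u|^p$ strongly in $L^1(\O)$, while the linear term is passed to the limit using the uniform $W^{1,p}_0$ bound, Sobolev compactness, and the admissibility of $g$. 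The principal obstacle is precisely the uniform $W^{1,p}_0$ estimate in the previous paragraph: the classical gradient-absorption test-function trick yields $\io|\n u_n|^p$ on the left but only $\l\io g u_n$ on the right, and without the admissibility hypothesis \eqref{eq:C} there is no way to close the estimate.
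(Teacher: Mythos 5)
Your central a priori estimate is correct and in fact takes a slightly more direct route than the paper. You test with $v_{n,\e}=u_n/(\e+u_n)$ and apply the admissibility inequality \eqref{eq:C} to $u_n$ itself, so that $X_n=\big(\io|\n u_n|^p\,dx\big)^{1/p}$ satisfies $X_n^p\le \frac{\l}{C(g,p)}X_n+\|f\|_{L^1(\O)}$, which is closed by Young's inequality since $p>1$; this gives the uniform $W^{1,p}_0(\O)$ bound in one stroke. The paper instead tests \eqref{aprox122} with $T_k(u_n)$, writes $\io|\n u_n|^pT_k(u_n)\,dx=\io|\n H_k(u_n)|^p\,dx$ with $H_k(\s)=\int_0^\s (T_k(t))^{1/p}dt$, applies \eqref{eq:C} to $H_k(u_n)$ together with $H_k(\s)\ge C_1(k)\s-C_2(k)$ to bound $\io g\,u_n\,dx$ uniformly in $L^1$, and then re-runs the compactness scheme of Theorem \ref{exist1}. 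Both implementations rest on the same mechanism (absorption through \eqref{eq:C} and $p>1$), and yours is legitimate because at the approximate level $u_n\in W^{1,p}_0(\O)$, so $\io g\,u_n\,dx$ is finite before one absorbs.

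The genuine soft spot is your construction of the approximating solutions. You freeze the zero-order term and claim that the map $v\mapsto u$, where $u$ solves the gradient-absorption problem with datum $\l T_n(g)T_n(v)+T_n(f)$, is continuous and compact ``by Theorem \ref{key}''. Theorem \ref{key} concerns only the linear problem; for the problem with $|\n u|^p$ the paper only produces a \emph{maximal} solution (Theorem \ref{exist1}), uniqueness being explicitly listed as an open problem, so this map is not known to be single-valued, let alone continuous, and selecting the maximal solution does not obviously restore continuity. The paper avoids this by taking as approximations the problems \eqref{aprox122} with the saturated term $\l g(x)u_n/(1+\frac1n u_n)$, whose solvability is exactly item (3) of the remark following Theorem \ref{exist1} (the same monotone double-truncation scheme, using the comparison principle of Theorem \ref{compa2}). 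Your step can be repaired in the same way, or by performing the fixed point at the level of a doubly truncated problem in which the gradient nonlinearity is also bounded, where uniqueness does hold by Theorem \ref{compa2}, and then passing to the limit as in Theorem \ref{exist1}. A smaller related point: truncating $g$ forces you, when passing to the limit in the zero-order term, to control tails such as $\int_{\{g>n\}}g\,u_n\,dx$, and equi-integrability of $\{g\,u_n\}_n$ is not automatic when $g$ is merely admissible; it is cleaner to leave $g$ untruncated (as the paper does) and observe that, by \eqref{eq:C}, $v\mapsto\io g\,v\,\phi\,dx$ is a bounded linear functional on $W^{1,p}_0(\O)$ for bounded $\phi$, so the uniform $W^{1,p}_0$ bound plus a.e.\ convergence suffices to pass to the limit there.
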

\begin{proof}
Fix $\l>0$ and define $\{u_n\}_n$ to be a sequence of positive solutions to problem
\begin{equation}\label{aprox122} \left\{
\begin{array}{rcll}
(-\Delta)^s u_n+|\n u_n|^p &=& \l g(x)\dfrac{u_n}{1+\frac 1n u_n}+ f & \text{   in }\Omega , \\  u_n &=& 0 &\hbox{  in }\mathbb{R}^N\setminus\Omega.
\end{array} \right.
\end{equation}
To reach the desired result we have just to show that the sequence $\{g(x)\dfrac{u_n}{1+\frac 1n _nu}\}_n$ is uniformly bounded in $L^1(\O)$. To do that, we use
$T_k(u_n)$ as a test function in \eqref{aprox122}, hence
\begin{equation}\label{estim-p}
||T_k(u_n)||^2_{H^s_0(\Omega)}+ \io |\n u_n|^p T_k(u_n) dx \le k \l \io g(x) u_n dx + k\| f\|_{L^{1}(\Omega)} .
\end{equation}
It is clear that
$$\io |\n u_n|^p T_k(u_n)=\io |\n H_k(u_n)|^p dx$$
where $H_k(\s)=\dint_0^\s (T_k(t))^{\frac{1}{p}}dt$. By a direct computation we obtain that
$$
H_k(\s)\ge C_1(k)\s-C_2(k),
$$
Thus using \eqref{eq:C} for $H_k(u_n)$ it holds that
\begin{eqnarray*}
\dyle \io |\n H_k(u_n)|^p dx &\ge & C(g,p)\bigg(\io g H_k(u_n) dx\bigg)^p\\ &\ge &  C(g,p)\left[C_1(k) \bigg(\io g u_n dx\bigg)^p -C_2(k)\right]
\end{eqnarray*}
where $C_1(k),C_2(k)>0$ are independent of $n$.

Therefore, going back to \eqref{estim-p}, we conclude that
$$
C_1(k) \bigg(\io g u_n dx\bigg)^p dx\le \frac{k}{C(g,p)}\left[\lambda\io g(x) u_n dx + \| f\|_{L^{1}(\Omega)}\right] + C_2(k).
$$
Since $p>1$, then by Young inequality we reach that $\{g u_n\}_n$ is uniformly bounded in $L^1(\O)$. The rest of the proof follows exactly the same compactness
arguments as in the proof of Theorem \ref{exist1}.
\end{proof}
\begin{remark}

\

\noindent $\bullet$ In the case where $g(x)=\dfrac{1}{|x|^{2s}}$, the Hardy potential, the condition \eqref{eq:C} holds if $p>\frac{N}{N-(2s-1)}$. Thus, in this case and for all
$\l>0$, problem \eqref{eq:prob} has a solution $u$ such that $u\in W^{1,p}_0(\O)$ and $T_k(u)\in W^{1,\a}_0(\Omega)\cap H^s_0(\Omega)$ for all $\a<2s$.

\noindent $\bullet$ Notice that, in this case, without the absorption term $|\n u|^p$, the existence of solution holds under the restriction $\l\le \Lambda_{N,s}$, where $\Lambda_{N,s}$ is the Hardy constant, and with integral condition on the datum $f$ near the origin. We refer to \cite{AMPP} for more details.
\end{remark}

\subsection{The case $2s\le p<\frac{s}{1-s}$: existence in a weighted Sobolev space.  }\label{general}

\

For $2s\le p<\dfrac{s}{1-s}$ and in the same way as above we can show the next existence result.
\begin{theorem}\label{thh}
Suppose that $f \in L^{m}(\O)$ with $m > N/[p'(2s-1)]$. Then there is $\lambda^*>0$ such that if $||f||_{L^m(\O)}\le \l^*$, problem \eqref{boundary problem}
admits a solution $u\d^{1-s}\in W_0^{1, p}(\Omega)$.
\end{theorem}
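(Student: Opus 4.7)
The plan is to apply Schauder's fixed point theorem in a weighted Sobolev-type space adapted to the regularity provided by Theorem \ref{th AP}. I would define an operator $T$ by letting $v=T(u)$ be the unique solution of the linear Dirichlet problem
\begin{equation*}
(-\Delta)^s v = f - |\nabla u|^p \ \hbox{ in }\Omega,\qquad v = 0 \ \hbox{ in }\mathbb{R}^N\setminus\Omega,
\end{equation*}
for $u$ in a suitable closed convex set. A fixed point of $T$ provides a distributional solution to \eqref{boundary problem}, so the task reduces to producing a non-empty closed bounded convex set that is stable under $T$ and on which $T$ is compact and continuous.

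The natural ambient space, in view of Theorem \ref{th AP}, is the weighted Sobolev space
\begin{equation*}
E := \bigl\{u\in W^{1,1}_0(\Omega) : |\nabla u|\,\delta^{1-s}\in L^{pm}(\Omega)\bigr\},
\end{equation*}
equipped with the norm $\|u\|_E := \||\nabla u|\,\delta^{1-s}\|_{L^{pm}}$, and the invariant set would be a ball $B_R=\{u\in E:\|u\|_E\le R\}$. The motivation for the exponent $pm$ is the numerology of the hypothesis: the condition $m>N/[p'(2s-1)]$ is equivalent to $pm < mN/(N-m(2s-1))$, which is precisely the integrability range for the weighted gradient given by \eqref{dd11-corr}. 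Hence, as soon as $F=f-|\nabla u|^p\in L^m(\Omega)$, Theorem \ref{th AP} yields
\begin{equation*}
\|v\|_E = \||\nabla v|\,\delta^{1-s}\|_{L^{pm}} \le \hat{C}\bigl(\|f\|_{L^m}+\||\nabla u|^p\|_{L^m}\bigr).
\end{equation*}

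The key technical point is to control $\||\nabla u|^p\|_{L^m}$ by a power of $\|u\|_E$. I would write $|\nabla u|^{pm}=(|\nabla u|\,\delta^{1-s})^{pm}\delta^{-pm(1-s)}$ and then combine H\"older with the fact that $\delta^{-pm(1-s)}$ is integrable on $\Omega$ in the admissible range of $m$ (this is where $p<s/(1-s)$ enters, forcing $pm(1-s)<1$ once $m$ is taken close enough to $N/[p'(2s-1)]$); Proposition \ref{hardydy} complements this to absorb the boundary singularity. This produces an inequality of the form $\||\nabla u|^p\|_{L^m}\le C\|u\|_E^p$, and hence
\begin{equation*}
\|T(u)\|_E \le \hat{C}\,\|f\|_{L^m}+\hat{C}\,C\,R^p.
\end{equation*}
Choosing $R$ so that $\hat{C}CR^{p-1}<1/2$ (possible because $p>1$), one obtains $T(B_R)\subset B_R$ provided $\|f\|_{L^m}\le \lambda^*:=R/(2\hat{C})$.

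Continuity and compactness of $T$ would follow from an approximation argument in the spirit of the proof of Theorem \ref{exist1}, together with the compact embedding of $E$ into $W^{1,q}_0(\Omega)$ for an appropriate $q$ (cf.\ Theorem \ref{key} and Proposition \ref{key2-locc}). Once the self-map, continuity and compactness are in place, Schauder's fixed point theorem delivers a fixed point $u\in B_R$, which is the desired solution. In my view the main obstacle is precisely the verification of the H\"older/Hardy estimate that closes the iteration at the exponent $pm$: all the numerology has to be simultaneously compatible with the hypotheses $p\ge 2s$, $p<s/(1-s)$, and $m>N/[p'(2s-1)]$, and proving compactness of $T$ on $E$ requires care because no uniform $L^\infty$ control on $|\nabla u|$ is available in this regime.
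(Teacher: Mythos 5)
Your overall strategy is the paper's: a Schauder fixed point argument in a set normalized by $\||\nabla \cdot|\,\delta^{1-s}\|_{L^{pm}}$, with the smallness of $\|f\|_{L^m}$ used to make a ball invariant, and the observation that $m>N/[p'(2s-1)]$ is exactly $pm<\frac{mN}{N-m(2s-1)}$ is also the right numerology. The gap is in the central estimate. You write $\|T(u)\|_E\le \hat C\bigl(\|f\|_{L^m}+\||\nabla u|^p\|_{L^m}\bigr)$, which presupposes that the nonlinear source $|\nabla u|^p$ belongs to $L^m(\Omega)$, i.e. that $|\nabla u|\in L^{pm}(\Omega)$ \emph{without} weight; but membership in your set $E$ only controls $|\nabla u|\,\delta^{1-s}$ in $L^{pm}$. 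Your proposed bridge, writing $|\nabla u|^{pm}=(|\nabla u|\,\delta^{1-s})^{pm}\delta^{-pm(1-s)}$ and invoking H\"older plus integrability of $\delta^{-pm(1-s)}$, does not close: the factor $(|\nabla u|\,\delta^{1-s})^{pm}$ is only known to be in $L^1$, so pairing it with an integrable (but unbounded) negative power of $\delta$ gives no bound at all unless you already have higher integrability of the weighted gradient. Moreover $pm(1-s)<1$ is not implied by the hypotheses $p\ge 2s$, $p<s/(1-s)$, $m>N/[p'(2s-1)]$ (e.g. $N=3$, $s=3/4$, $p=3/2$, $m=10$), and since solutions of the linear Dirichlet problem behave like $\delta^{s}$ near $\partial\Omega$, the unweighted gradient is in general only in $L^{a}$ for $a<\frac{1}{1-s}$; so the inequality $\||\nabla u|^p\|_{L^m}\le C\|u\|_E^{p}$ you need is essentially unavailable in this regime.

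The paper avoids this in two ways, and both ingredients are missing from your sketch. First, for the mere well-posedness of $T$ it does not claim $f-|\nabla v|^p\in L^m$, but only that $|\nabla v|^p\,\delta^{\beta}\in L^1(\Omega)$ for some $\beta<2s-1$: one writes $|\nabla v|^{p}\delta^{\beta}=|\nabla v|^{p}\delta^{p(1-s)}\,\delta^{\beta-p(1-s)}$ and uses H\"older with exponents $m,m'$, where $p<s/(1-s)$ and $m>N/[p'(2s-1)]>1/s$ guarantee a choice of $\beta<2s-1$ with $(p(1-s)-\beta)m'<1$; then the measure-data existence result (Theorem \ref{key}, with weight $\delta^{\beta}$) applies, not Theorem \ref{th AP} with $L^m$ datum. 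Second, and more importantly, the invariance estimate $\||\nabla u|\,\delta^{1-s}\|_{L^{pm}}\le \bar C\bigl(\||\nabla v|\,\delta^{1-s}\|_{L^{pm}}^{\,p}+\|f\|_{L^m}\bigr)$ is not obtained from the linear theory with an $L^m$ right-hand side, but directly from the Green representation: one differentiates $u=\mathbb{G}_s[f]-\mathbb{G}_s[|\nabla v|^p]$, splits each integral into the regions $|x-y|<\delta(x)$ and $|x-y|\ge\delta(x)$ (the terms $I_1,I_2,J_1,J_2$ in the paper), and uses the pointwise bounds on $\nabla_x G_s$ to estimate the $L^{pm}$ norms of these terms by the \emph{weighted} quantity $\||\nabla v|\,\delta^{1-s}\|_{L^{pm}}$ and $\|f\|_{L^m}$. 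This weighted kernel estimate is precisely the step your H\"older/Hardy argument was meant to replace, and without it the self-map property of $T$ on the ball does not follow.
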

\begin{proof}
The proof follows closely the argument used in \cite{AP}, however, for the reader convenience we include here some details.

Without loss of generality we can assume that $N\ge 2$. Fix $\l^*>0$ such that if $||f||_{L^m(\O)}\le \l^*$, then there exists $l>0$ satisfies
$$
\bar{C}(\O,N,s,m,p)(l+||f||_{L^m(\O)})=l^{\frac{1}{p}},
$$
where $\bar{C}(\O,N,s,m,p)$ is a positive constant which only depends on the data, it is independent of $f$ and its will be specified below.

Define now the set
\begin{equation}\label{sett}
E=\bigg\{v\in W^{1,1}_0(\O): v\, \d^{1-s}\in W^{1,pm}_0(\O)\mbox{  and  } \bigg(\io |\n (v\, \d^{1-s})|^{pm} dx\bigg)^{\frac{1}{pm}}\le l^{\frac{1}{2s}}\bigg\},
\end{equation}
It is clear that $E$ is a closed convex set of $W^{1,1}_0(\O)$. Using Hardy inequality in \eqref{hardydye}, we deduce that if $v\in E$, then $|\n v|^{pm}\d^{pm(1-s)}\in L^1(\O)$ and
$$
\bigg(\io |\n v|^{pm}\, \d^{pm(1-s)}dx\bigg)^{\frac{1}{pm}}\le \hat{C}_0(\O)l^{\frac{1}{p}}.
$$
Define now the operator
$$
\begin{array}{rcl}
T:E &\rightarrow& W^{1,1}_0(\O)\\
   v&\rightarrow&T(v)=u
\end{array}
$$
where $u$ is the unique solution to problem
\begin{equation}
\left\{
\begin{array}{rcll}
(-\Delta)^s u &= & -|\nabla v|^{p}+f & \text{ in }\Omega , \\ u &=& 0 &\hbox{  in } \mathbb{R}^N\setminus\Omega,\\ u&>&0 &\hbox{ in }\Omega.
\end{array}%
\right.  \label{fix1}
\end{equation}
To prove that $T$ is well defined we will use Theorem \ref{key}, namely we show the existence of $\b<2s-1$ such that $\bigg|f-|\n v|^{p}\bigg|\d^\b\in L^1(\O)$. To do that we have just to show that $|\n v|^{p}\d^\b\in L^1(\O)$.

It is cleat
that $|\n v|^{p}\in L^1_{loc}(\O)$, moreover, we have
$$
\io|\n v|^{p}\d^\b dx=\io|\n v|^{p}\d^{p(1-s)}\d^{\beta-p(1-s)}dx\le \bigg(\io|\n v|^{pm}\d^{pm(1-s)}dx\bigg)^{\frac{1}{m}} \bigg(\io
\d^{(\beta-p(1-s))m'}dx\bigg)^{\frac{1}{m'}}.
$$
If $p(1-s)<2s-1$, we can chose $\beta<2s-1$ such that $p(1-s)<\beta$. Hence $\io \d^{(\beta-p(1-s))m'}dx<\infty$.

Assume that $p(1-s)\ge 2s-1$, then $s\in (\frac{1}{2}, \frac{p+2}{p+1}]$. Notice that, since $p<\frac{s}{1-s}$, then $p(1-s)-(2s-1)<1-s$. Since $m>\frac{N}{p'(2s-1)}>\frac{1}{s}$, then
$(p(1-s)-(2s-1))m'<1$. Hence we get the existence of $\beta<2s-1$ such that $(p(1-s)-\beta)m'<1$ and then we conclude.

Then using the fact that $v\in E$, we reach that $|\nabla v|^{p}\d^\b+f \in L^1(\O)$. Therefore the existence of $u$ is a consequence of Theorems \ref{key} and \ref{th AP}. Moreover,  $|\n u|\in L^{\a}(\O)$ for all $\a<\frac{N}{N-2s+1+\beta}$. Hence $T$ is well defined.

Now following the argument used in \cite{AP}, for $l$ defined as above and using the regularity result in Theorem \ref{th AP} where we choose the constant $\bar{C}$ strongly related to the constant $\hat{C}$ defined in formula \eqref{dd11-corr}, we can prove that $T$ is continuous and compact on $E$ and that $T(E)\subset E$. For the reader convenience we included some details

We have
$$
u(x)=\io \mathcal{G}_s(x,y)f(y))dy- \io \mathcal{G}_s(x,y)|\n v(y)|^{2s}dy,
$$
then
$$
\n u(x)=\io \n_x\mathcal{G}_s(x,y)f(y))dy- \io \n_x\mathcal{G}_s(x,y)|\n v(y)|^{p}dy.
$$
Thus
$$
|\n u(x)|\le\io \frac{|\n_x \mathcal{G}_s(x,y)|}{\mathcal{G}_s(x,y)}\mathcal{G}_s(x,y) (|\n v(y)|^{p} dy+f(y))dy.
$$
Taking into consideration the properties of the Green function, it holds that
\begin{eqnarray*}
|\n u(x)| \d^{1-s}&\le & C_2(\O,N,s)(I_1(x)+I_2(x)+J_1(x)+J_2(x)),
\end{eqnarray*}
where
$$
I_1(x)=\d^{1-s}(x)\int_{\{|x-y|<\d(x)\}} \frac{\mathcal{G}_s(x,y)}{|x-y|}|\n v(y)|^{2s} dy,
$$
$$
I_2(x)=\frac{1}{\d^s(x)} \int_{\{|x-y|\ge \d(x)\}} \mathcal{G}_s(x,y)|\n v(y)|^{2s} dy,
$$
$$
J_1(x)=\d^{1-s}(x)\int_{\{|x-y|<\d(x)\}} \frac{\mathcal{G}_s(x,y)}{|x-y|}f(y)dy,
$$
and
$$
J_2(x)=\frac{1}{\d^s(x)} \int_{\{|x-y|\ge \d(x)\}} \mathcal{G}_s(x,y)f(y) dy.
$$
Following the arguments used in \cite{AP} and using the regularity result in Theorem \ref{key}, we get the existence of a positive constant $\breve{C}:=\breve{C}(\O,N,s,m)$ such that $I_i,J_i\in L^{pm}(\O)$ for $i=1,2$ and $$
||I_i||_{L^{pm}(\O)} +||J_i||_{L^{pm}(\O)}\le \breve{C}\bigg(|||\n v|\d^{1-s}||_{L^{pm}(\O)}+ ||f||_{L^m(\O)}\bigg).
$$
Hence assuming that $\bar{C}(\O,N,s,m,p)=\breve{C}(\O,N,s,m,p)C_2(\O,N,s,m,p)$, it follows that
\begin{eqnarray*}
|||\n v|\d^{1-s}||_{L^{pm}(\O)} & \le & \breve{C}(\O,N,s,m,p)C_2(\O,N,s,m,p)\bigg(|||\n v|\d^{1-s}||_{L^{pm}(\O)}+ ||f||_{L^m(\O)}\bigg)\\
&\le & \bar{C}(\O,N,s,m,p)(l^{\frac{1}{p}}+||f||_{L^m(\O)}\bigg)=l^{\frac{1}{p}}.
\end{eqnarray*}
Thus $u\in E$ and then $T(E)\subset E$. In the same way we can prove that $T$ is compact.

Therefore by the Schauder Fixed Point Theorem, there exists $u\in E$ such that $T(u)=u$. Thus,  $u\in
W^{1,pm}_{loc}(\O)$ solves \eqref{boundary problem}, at least in the sense of distribution.
\end{proof}

\begin{remark}

\

\begin{enumerate}
\item It is clear that the above argument does not take advantage of the fact that the gradient term appears as an absorption term.

\item The existence of a solution can be also proved independently of the sign of $f$.
\end{enumerate}
\end{remark}
As in Theorem \ref{regularity and order int f1}, if in addition we suppose that $f$ is more regular, then under suitable hypothesis on $s$ and $p$, we get the
following analogous result of Theorem \ref{regularity and order int f1}.

\begin{corollary}\label{cor classic}
Assume that the conditions of Theorem \ref{thh} hold. Assume in addition that
\begin{equation}\label{more}
N<\frac{s(2s-1)}{1-s}\mbox{   and   }p<\frac{s(2s-1)}{N(1-s)}-1.
\end{equation}
If $f \in \mathcal{C}^{\epsilon}(\Omega)$, for some $\epsilon \in (0, 2s-1)$, then the $\mathcal{C}^{1, \alpha}$ distributional solutions from Theorem
\ref{thh} is a strong solution.
\end{corollary}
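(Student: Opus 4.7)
The plan is to first upgrade the distributional solution $u$ from Theorem \ref{thh} to class $\mathcal{C}^{1,\alpha}(\Omega)$ using the added hypotheses \eqref{more}, and then invoke the bootstrap from the proof of Theorem \ref{regularity and order int f1} to reach strong regularity.

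For the upgrade to $\mathcal{C}^{1,\alpha}$, Theorem \ref{thh} provides $|\nabla u|\delta^{1-s} \in L^{pm}(\Omega)$ with $m>N/[p'(2s-1)]$, and I would convert this weighted estimate into an unweighted one by writing $|\nabla u|^p = (|\nabla u|\delta^{1-s})^p\,\delta^{-p(1-s)}$ and applying H\"{o}lder's inequality. Choosing the H\"{o}lder exponents so that the remaining negative power of $\delta$ stays integrable near $\partial\Omega$, the goal is to conclude $|\nabla u|^p \in L^{m^*}(\Omega)$ for some $m^* > N/(2s-1)$. The role of \eqref{more} is precisely to make such an exponent balance possible: $N < s(2s-1)/(1-s)$ widens the range of admissible weights, while $p < s(2s-1)/(N(1-s))-1$ keeps the product $p(1-s)$ small enough that $m^*$ can be pushed across the threshold $N/(2s-1)$. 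Once this is in hand, since $f \in \mathcal{C}^\epsilon \subset L^\infty$, the source $f - |\nabla u|^p$ lies in $L^{m^*}(\Omega)$ with $m^* > N/(2s-1)$, and Theorem \ref{th AP}(2) yields $u \in \mathcal{C}^{1,\sigma}(\Omega)$ for some $\sigma \in (0,1)$. This exponent bookkeeping is the main obstacle I foresee.

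Once $u$ is known to be $\mathcal{C}^{1,\sigma}$, the remainder follows the proof of Theorem \ref{regularity and order int f1} verbatim. The bound $|\nabla u|\in L^\infty_{loc}$ yields $f - |\nabla u|^p \in L^\infty_{loc}(\Omega)$, so Proposition 2.3 of \cite{RS} gives $u \in \mathcal{C}^\beta(\Omega'')$ for every $\beta \in (0, 2s)$ and every $\Omega'' \Subset \Omega$; picking $\beta \in (1, 2s)$ with $p(\beta-1) \geq \epsilon$ shows $f - |\nabla u|^p \in \mathcal{C}^\epsilon$ locally, whence Corollary 2.4 of \cite{RS} lifts $u$ to $\mathcal{C}^{2s+\epsilon}$ locally in $\Omega$. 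This regularity together with a suitable extension of $f-|\nabla u|^p$ to $\overline{\Omega}$ gives $u \in \mathbb{X}_s$ via Lemma 2.1(ii) of \cite{CV1}, so the integration by parts identity $\int_\Omega u(-\Delta)^s\phi = \int_\Omega \phi (-\Delta)^s u$ holds for $\phi \in \mathbb{X}_s$. Testing with $\phi \in \mathcal{C}^\infty_0(\Omega)$ in the distributional formulation then yields $(-\Delta)^s u(x) = f(x) - |\nabla u(x)|^p$ for a.e. $x \in \Omega$, and continuity extends the identity to hold pointwise everywhere in $\Omega$.
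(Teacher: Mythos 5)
The second half of your argument (from $u\in\mathcal{C}^{1,\alpha}$ to a strong solution via Proposition 2.3 and Corollary 2.4 of \cite{RS}, membership in $\mathbb{X}_s$, and integration by parts) reproduces the proof of Theorem \ref{regularity and order int f1} and is fine. The genuine gap is in the first half, which is precisely the part where the corollary goes beyond that theorem. You claim that H\"older's inequality applied to $|\nabla u|^{p}=(|\nabla u|\delta^{1-s})^{p}\delta^{-p(1-s)}$ yields $|\nabla u|^{p}\in L^{m^{*}}(\Omega)$ with $m^{*}>N/(2s-1)$. But H\"older against the unbounded weight $\delta^{-p(1-s)}$ can only \emph{lower} the exponent: from $|\nabla u|\delta^{1-s}\in L^{pm}(\Omega)$ one gets $|\nabla u|^{p}\in L^{m}_{loc}(\Omega)$ and, globally, $|\nabla u|^{p}\in L^{m^{*}}(\Omega)$ only for $m^{*}<m$ subject to $p(1-s)\,m m^{*}/(m-m^{*})<1$. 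Theorem \ref{thh} only assumes $m>N/[p'(2s-1)]$, which is strictly below $N/(2s-1)$, so $m$ (and hence any admissible $m^{*}$) may lie below the threshold; no choice of H\"older exponents, with or without \eqref{more} (which does not involve $m$), can produce $m^{*}>N/(2s-1)$. Consequently your one-shot application of Theorem \ref{th AP}(2) to $f-|\nabla u|^{p}$ cannot be carried out, and the $\mathcal{C}^{1,\alpha}$ regularity is not established.

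What is missing is either an iteration or the drift reformulation that the paper indicates. Iteration: since $f\in\mathcal{C}^{\epsilon}(\Omega)\subset L^{\infty}_{loc}(\Omega)$, if $|\nabla u|\in L^{pr}_{loc}(\Omega)$ then the right-hand side of $(-\Delta)^{s}u=f-|\nabla u|^{p}$ is in $L^{r}_{loc}(\Omega)$, and the local regularity of Proposition \ref{key2-locc} upgrades $|\nabla u|$ to $L^{q}_{loc}(\Omega)$ for all $q<rN/(N-r(2s-1))$; one checks that $q/p>r$ is achievable exactly because $r\geq m>N/[p'(2s-1)]$, so finitely many steps cross $N/(2s-1)$ and give $u\in\mathcal{C}^{1,\sigma}$ locally (justifying the use of these regularity results with only the weighted global information of Theorem \ref{thh} is where the boundary weights, and hence \eqref{more}, have to be handled). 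Alternatively, and this is the route the remark following the corollary points to, write $|\nabla u|^{p}=\langle B,\nabla u\rangle$ with $B=|\nabla u|^{p-2}\nabla u$ and note $|B|=|\nabla u|^{p-1}\in L^{\sigma}_{loc}(\Omega)$ with $\sigma=pm/(p-1)>N/(2s-1)$ (this is the stated purpose of \eqref{more} together with $m>N/[p'(2s-1)]$), so that $B$ lies in the Kato class of \cite{BJ}; the comparability of the Green functions of $(-\Delta)^{s}$ and $(-\Delta)^{s}+B\cdot\nabla$, as used in Theorem \ref{non}, then transfers the $\mathcal{C}^{1,\sigma}$ regularity of Theorem \ref{th AP} to $u$. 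Your proposal contains neither device, and its description of the role of \eqref{more} is not backed by any exponent computation, so the key step of the corollary remains unproved.
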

Notice that the condition \eqref{more} is used in order to show that $|\n u|^{p-1}\in L^\s_{loc}(\O)$ for some $\s>\frac{N}{2s-1}$ which is the key point in order
to get the desired regularity.

In the case where $f\gvertneqq 0$, we can prove also that $u\gneqq 0$, more precisely, we have
\begin{corollary}\label{Non-negative}
Assume that the above conditions hold. Let $f \in \mathcal{C}^{\epsilon}(\Omega)\cap L^{\infty}(\overline{\Omega})$, for some $\epsilon \in (0, 2s-1)$. If
$f(x)\geq 0$ for all $x \in \Omega$, then the solution  from Theorem \ref{thh} is non-negative. Moreover, if $f_1 \leq f_2$ and $u_1$ and $u_2$ are the
corresponding strong solutions to $f_1$ and $f_2$ from Corollary \ref{cor classic}, respectively, then $u_1 \leq u_2$.
\end{corollary}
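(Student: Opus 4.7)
The plan is to deduce both assertions from the comparison principle, Theorem \ref{compa2}, applied with the nonlinearity $H_1(x,w,\xi)=-|\xi|^p$. Under the standing assumptions and condition \eqref{more}, Corollary \ref{cor classic} yields that the solutions in question are strong $\mathcal{C}^{1,\alpha}(\Omega)$ solutions, while Theorem \ref{th AP} provides the sharp boundary estimate $\||\nabla u|\,\delta^{1-s}\|_{L^\infty(\Omega)}\le C\|f\|_{L^\infty(\Omega)}$, so the equations hold pointwise in $\Omega$ and all gradient integrabilities below make sense.

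For the non-negativity claim, I would set $w_1\equiv 0$ and $w_2=u$. Since $f\ge 0$, one has $(-\Delta)^s w_1=0\le -|\nabla w_1|^p+f$, while $w_2$ satisfies $(-\Delta)^s w_2=-|\nabla w_2|^p+f$ pointwise in $\Omega$. The structural hypothesis of Theorem \ref{compa2} is then obtained by the mean value theorem applied to $\xi\mapsto -|\xi|^p$:
\[
-|\nabla w_1|^p-(-|\nabla w_2|^p)=\langle B(x),\nabla(w_1-w_2)\rangle, \qquad B(x):=-p\int_0^1 |\xi_\theta|^{p-2}\xi_\theta\,d\theta,
\]
with $\xi_\theta:=\theta\nabla w_1+(1-\theta)\nabla w_2$, so the residual $f(x,w_1,w_2)$ in Theorem \ref{compa2} is identically zero. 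The pointwise bound $|B|\le p|\nabla u|^{p-1}$ combined with $|\nabla u|\le C\delta^{s-1}$ gives $B\in L^a(\Omega)$ whenever $(p-1)(1-s)a<1$, and condition \eqref{more} ensures that the resulting upper bound $1/((p-1)(1-s))$ exceeds $N/(2s-1)$, so an admissible $a$ exists. Theorem \ref{compa2} then delivers $0=w_1\le w_2=u$.

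For the monotonicity assertion, I take $w_1=u_1$ and $w_2=u_2$. Rewriting both equations with right-hand side $f_2$, one has $(-\Delta)^s w_1\le -|\nabla w_1|^p+f_2$ and $(-\Delta)^s w_2=-|\nabla w_2|^p+f_2$. The same mean value identity applies with $\xi_\theta=\theta\nabla u_1+(1-\theta)\nabla u_2$, producing a vector field $B$ dominated by $p(|\nabla u_1|+|\nabla u_2|)^{p-1}$, hence lying in $L^a(\Omega)$ for the same admissible $a$ as above by Theorem \ref{th AP}. The residual vanishes, and Theorem \ref{compa2} gives $u_1\le u_2$.

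The main obstacle is the verification that $B\in L^a(\Omega)$ for some $a>N/(2s-1)$. In the interior this is immediate from $\mathcal{C}^{1,\alpha}$-regularity; the delicate issue is the blow-up of $|\nabla u_i|$ near $\partial\Omega$, which is what forces the smallness restriction on $(p-1)(1-s)$. The sharp weighted bound $|\nabla u_i|\le C\delta^{s-1}$ from Theorem \ref{th AP}, together with the inequality $s(2s-1)/(N(1-s))-1<1+(2s-1)/(N(1-s))$ (an elementary consequence of $s<1$), shows that condition \eqref{more} is precisely what leaves room to pick an admissible $a$.
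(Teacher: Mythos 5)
Your strategy (reduce both claims to the comparison principle of Theorem \ref{compa2} via a mean--value decomposition of the gradient term) is genuinely different from the paper's, but as written it has a gap at exactly the point you yourself single out as the main obstacle. The bound $\||\nabla u|\,\delta^{1-s}\|_{L^\infty(\Omega)}\le C\|f\|_{L^\infty(\Omega)}$ that you invoke is the content of Theorem \ref{th AP} \emph{for the linear problem} \eqref{gener1-corr}. For the solution produced by Theorem \ref{thh} the paper only provides $|\nabla u|\,\delta^{1-s}\in L^{pm}(\Omega)$ (from the fixed--point set $E$ and Hardy's inequality) together with \emph{interior} $\mathcal{C}^{1,\alpha}$ regularity from Corollary \ref{cor classic}; indeed, Corollary \ref{cor classic} is careful to claim only $|\nabla u|^{p-1}\in L^{\sigma}_{loc}(\Omega)$ for some $\sigma>N/(2s-1)$, not a global bound. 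To transfer Theorem \ref{th AP} to the nonlinear solution you would need $f-|\nabla u|^{p}\in L^{\tilde m}(\Omega)$ (unweighted) for some $\tilde m>N/(2s-1)$, which is precisely the boundary integrability you are trying to obtain, so the step is circular. For the same reason the standing hypotheses of Theorem \ref{compa2} — $w_i\in W^{1,q}_0(\Omega)$ for all $q<N/(N-2s+1)$ and $(-\Delta)^s w_i\in L^1(\Omega)$, i.e.\ $|\nabla u_i|^{p}\in L^1(\Omega)$ globally — are not verified either; the construction in Theorem \ref{thh} only gives $|\nabla u|^{p}\delta^{\beta}\in L^1(\Omega)$ for a suitable $\beta$ which may be positive. (Your arithmetic that \eqref{more} forces $(p-1)(1-s)<\frac{2s-1}{N}$, hence leaves room for $a>\frac{N}{2s-1}$, is correct; what is missing is the pointwise bound $|\nabla u|\le C\delta^{s-1}$ for the \emph{nonlinear} solution on which everything rests.)

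The paper's own proof bypasses all of this. Since the solutions in question are strong, continuous in $\mathbb{R}^N$ and vanish outside $\Omega$, one evaluates the equation at an interior point where $u$ (respectively $u_2-u_1$) attains a negative minimum: there the gradient of the relevant function vanishes (so the gradient terms cancel) while its fractional Laplacian is strictly negative, contradicting $f\ge 0$ (respectively $f_1\le f_2$). If you wish to keep your route, you would first have to prove the weighted gradient bound $\||\nabla u|\,\delta^{1-s}\|_{L^\infty(\Omega)}<\infty$ for the nonlinear solution, e.g.\ by rerunning the Green--function estimates in the proof of Theorem \ref{thh} with $L^\infty$ data; that is substantially more work than the pointwise maximum--principle argument the corollary actually uses.
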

\begin{proof}
Suppose that there is a point $x_0 \in \Omega$ so that $u(x_0) <0$. Since $u$ is continuous in $\mathbb{R}^{N}$ (see Proposition 1.1 in \cite{RS}), we have $u$
attains its negative minimum at an interior point $x_1$ of $\Omega$. Hence
$$\nabla u(x_1)=0, \quad (-\Delta)^{s}u(x_1)< 0.$$But hence we obtain the contradiction $0 \leq f(x_1)-0 = (-\Delta)^{s}u(x_1)< 0$.

We next prove the last statement in the Corollary \ref{Non-negative}. Let $f_1 \leq f_2$. Let $u_1$ and $u_2$ be the corresponding strong solutions  from Corollary \ref{cor classic}, and assume that
$$\min_{\Omega}(u_2-u_1)=u_2(x_0)-u_1(x_0) < 0.$$Hence $\nabla(u_1-u_2)(x_0)=0$ and $(-\Delta)^{s}(u_2-u_1)(x_0)<0$, so we have the contradiction
$$f_1(x_0)=(-\Delta)^{s}u_1(x_0) +|\nabla u_1(x_0)|^{p} > (-\Delta)^{s}u_2(x_0) +|\nabla u_2(x_0)|^{p} = f_2(x_0).$$
\end{proof}

\section{Equivalence between distributional and viscosity solutions}\label{equiv}

In this section, we investigate the relation between distributional solutions and viscosity solutions. Let us recall that according to Theorem \ref{regularity and
order int f1} and  Corollary \ref{cor classic}, to obtain strong solutions to \eqref{boundary problem} it is sufficient that $f \in
\mathcal{C}^{\epsilon}(\Omega)$ and that
$$p < p_{*}$$ or
$$ N<\frac{s(2s-1)}{1-s},\:\:p^*\le p<\frac{s(2s-1)}{N(1-s)}-1\hbox{   and  } ||f||_{L^{m}(\Omega)} \leq \l^*,$$ for $\l^*$ defined in Theorem \ref{thh}. In this
section we show that strong solutions to \eqref{boundary problem} are viscosity solutions. The converse is also true provided a comparison principle for viscosity
solutions. We  prove it in the next subsection.

\subsection{A comparison principle for viscosity solutions.}

 We prove a comparison result for viscosity solutions of problem \eqref{boundary problem}. This result requires a continuous source term $f$.

In order to state the result, we shall need some technical lemmas that could have interest by themselves.  For related results see \cite{Li}.

We start with a usual property for the fractional Laplacian of smooth functions. See \cite[Lemma 2.6]{KKL} for the proof.

\begin{lemma}\label{absolute continuity}
Let $B_\epsilon(x)\subset U \Subset\Omega$ and let $u \in \mathcal{C}^{2}(U)$. Then:
$$\Big\vert P.V. \int_{B_\epsilon(x)}\frac{u(x)-u(y)}{|x-y|^{N+2s}}dy \Big\vert \leq c_\epsilon$$where $c_\epsilon$ is independent of $x$ and $c_\epsilon \to 0$
as $\epsilon \to 0$.
\end{lemma}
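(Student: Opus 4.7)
The plan is to use a second-order Taylor expansion of $u$ about $x$ together with the symmetry of the ball $B_\epsilon(x)$ to kill the first-order term, leaving only a quadratic remainder whose singular integral is finite and vanishes as $\epsilon \to 0$.

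First I would fix $x$ with $B_\epsilon(x)\subset U \Subset \Omega$ and, since $u\in\mathcal{C}^2(U)$, write for $y\in B_\epsilon(x)$
$$u(x)-u(y) = -\nabla u(x)\cdot(y-x) - R(x,y),$$
with the pointwise bound $|R(x,y)|\le \tfrac{1}{2}\|D^2 u\|_{L^\infty(U)}|y-x|^2$. Define $M:=\|u\|_{\mathcal{C}^2(U)}$, so that $M$ is finite and independent of $x$ (it depends only on $U$ and $u$).

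Next, I would split the principal value integral accordingly. By the antisymmetry of $y\mapsto \nabla u(x)\cdot(y-x)$ with respect to the reflection $y\mapsto 2x-y$, and the invariance of $B_\epsilon(x)$ and of $|x-y|^{-N-2s}$ under that reflection, the linear contribution vanishes in the principal-value sense: for every $0<\rho<\epsilon$,
$$\int_{B_\epsilon(x)\setminus B_\rho(x)}\frac{\nabla u(x)\cdot(y-x)}{|x-y|^{N+2s}}\,dy = 0.$$
Therefore
$$P.V.\int_{B_\epsilon(x)}\frac{u(x)-u(y)}{|x-y|^{N+2s}}\,dy = -\lim_{\rho\to 0}\int_{B_\epsilon(x)\setminus B_\rho(x)} \frac{R(x,y)}{|x-y|^{N+2s}}\,dy,$$
and the quadratic remainder $R$ guarantees that the integrand is absolutely integrable on $B_\epsilon(x)$, so the limit exists and equals the ordinary (Lebesgue) integral.

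Finally, I would estimate the remainder. Using the pointwise bound on $R$ and passing to polar coordinates $z=y-x$,
$$\Big\vert \int_{B_\epsilon(x)}\frac{R(x,y)}{|x-y|^{N+2s}}\,dy\Big\vert \le \frac{M}{2}\int_{B_\epsilon(0)}|z|^{2-N-2s}\,dz = \frac{M\,\omega_{N-1}}{2}\int_0^\epsilon r^{1-2s}\,dr = \frac{M\,\omega_{N-1}}{2(2-2s)}\,\epsilon^{2-2s},$$
where the integral converges precisely because $s<1$ (so $1-2s>-1$). Setting $c_\epsilon := \tfrac{M\omega_{N-1}}{2(2-2s)}\epsilon^{2-2s}$ gives a bound independent of $x$ with $c_\epsilon \to 0$ as $\epsilon\to 0$, which is the claim. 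The only mildly delicate point is the symmetry argument that annihilates the linear term; everything else is a direct estimate.
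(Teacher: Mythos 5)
Your argument is correct and is precisely the standard proof of this estimate: the paper itself gives no proof but cites \cite{KKL}, where the bound is obtained by the same second-order Taylor expansion, cancellation of the linear term by the symmetry of the annuli $B_\epsilon(x)\setminus B_\rho(x)$ under $y\mapsto 2x-y$, and the elementary computation $\int_{B_\epsilon(0)}|z|^{2-N-2s}dz=\omega_{N-1}\epsilon^{2-2s}/(2-2s)$, which is finite because $s<1$. The only caveat (shared by the statement of the lemma itself) is that $\|D^{2}u\|_{L^{\infty}(U)}$ need not be finite for $u\in\mathcal{C}^{2}(U)$ with $U$ merely open, so the constant $M$ should be taken as a bound for $|D^{2}u|$ on $\overline{U}$ or on the union of the admissible balls, which is how the estimate is used in the paper.
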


Observe that in the  definition of viscosity solutions, we do not evaluate  the given equation  in the solution $u$. However, the following lemma states  an extra information when
$u$ is touched from below or above by $C^{2}$-test functions.

\begin{lemma}\label{smooth functions}
Let $u$ be a viscosity supersolution to \eqref{boundary problem} and suppose that there exists $\phi \in C^{2}(U)$, $U \Subset \Omega$, touching $u$ from below at
$x_0 \in U$. Then $(-\Delta)^{s}u(x_0)$ is finite and moreover:
\begin{equation}\label{eq}
(-\Delta )^{s}u(x_0) + |\nabla \phi(x_0)|^{p} \geq f(x_0).
\end{equation}A similar result holds for subsolutions.
\end{lemma}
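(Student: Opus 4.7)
The plan is to apply the viscosity supersolution property with a suitable restriction of $\phi$. For $\epsilon>0$ small enough that $\overline{B_\epsilon(x_0)}\subset U$, I would introduce
\[
v_\epsilon(x)=\begin{cases} \phi(x), & x\in B_\epsilon(x_0),\\ u(x), & x\in\mathbb{R}^N\setminus B_\epsilon(x_0),\end{cases}
\]
which still satisfies the lower contact condition in $B_\epsilon(x_0)$. Definition \ref{defi viscosity} applied at $x_0$ then yields
\[
(-\Delta)^s v_\epsilon(x_0)+|\nabla\phi(x_0)|^p\ge f(x_0).
\]

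Using $v_\epsilon(x_0)=\phi(x_0)=u(x_0)$, I would split $(-\Delta)^s v_\epsilon(x_0)$ into a singular piece (over $B_\epsilon(x_0)$, where $v_\epsilon=\phi$) and a regular piece (over the complement, where $v_\epsilon=u$):
\[
(-\Delta)^s v_\epsilon(x_0) = A_\epsilon + (-\Delta)^s_\epsilon u(x_0), \qquad A_\epsilon := P.V.\!\int_{B_\epsilon(x_0)}\!\frac{\phi(x_0)-\phi(y)}{|x_0-y|^{N+2s}}\,dy.
\]
Lemma \ref{absolute continuity} forces $|A_\epsilon|\le c_\epsilon\to 0$, so the only remaining issue is the behaviour of $(-\Delta)^s_\epsilon u(x_0)$ as $\epsilon\to 0^+$.

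The main obstacle is precisely to prove that $(-\Delta)^s_\epsilon u(x_0)$ has a finite limit: the viscosity inequality controls only the sum $A_\epsilon+(-\Delta)^s_\epsilon u(x_0)$, not each summand separately. To handle this I would fix $\delta_0>0$ with $B_{\delta_0}(x_0)\Subset U$ and write
\[
(-\Delta)^s_\epsilon u(x_0)=C_{\delta_0}+\!\int_{\epsilon<|x_0-y|<\delta_0}\!\frac{u(x_0)-u(y)}{|x_0-y|^{N+2s}}\,dy,
\]
where $C_{\delta_0}$ is the fixed tail integral over $\{|x_0-y|\ge\delta_0\}$, finite since $u$ vanishes outside $\Omega$ and is locally integrable. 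The contact condition $u\ge\phi$ on $U$ together with $u(x_0)=\phi(x_0)$ gives $u(x_0)-u(y)=\phi(x_0)-u(y)\le\phi(x_0)-\phi(y)$ on $B_{\delta_0}(x_0)$, so the second integral equals $I_\epsilon-J_\epsilon$ with
\[
I_\epsilon:=\!\int_{\epsilon<|x_0-y|<\delta_0}\!\frac{\phi(x_0)-\phi(y)}{|x_0-y|^{N+2s}}\,dy,\qquad J_\epsilon:=\!\int_{\epsilon<|x_0-y|<\delta_0}\!\frac{u(y)-\phi(y)}{|x_0-y|^{N+2s}}\,dy\ge 0.
\]
Here $I_\epsilon$ converges to a finite principal value by the $\mathcal{C}^2$ regularity of $\phi$, whereas $J_\epsilon$ is monotone non-decreasing as $\epsilon\downarrow 0$.

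Rearranging the viscosity inequality now gives $J_\epsilon \le A_\epsilon + C_{\delta_0} + I_\epsilon + |\nabla\phi(x_0)|^p - f(x_0)$, whose right-hand side is bounded uniformly in $\epsilon$; hence the monotone limit $J:=\lim_{\epsilon\to 0^+}J_\epsilon$ is finite. This forces $(-\Delta)^s_\epsilon u(x_0)$ to converge to a finite number which, by definition, is $(-\Delta)^s u(x_0)$, and passing to the limit $\epsilon\to 0$ in $A_\epsilon+(-\Delta)^s_\epsilon u(x_0)+|\nabla\phi(x_0)|^p\ge f(x_0)$ delivers \eqref{eq}. The subsolution case follows verbatim by reversing the inequalities and replacing lower by upper contact.
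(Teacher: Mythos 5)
Your argument is correct and is essentially the paper's own proof: both glue the test function onto $u$ over a shrinking ball, invoke Lemma \ref{absolute continuity} to make the inner principal value negligible, exploit $u\ge\phi$ near $x_0$ (your nonnegative, monotone $J_\epsilon$) together with control of the far tail, and let the viscosity inequality supply the missing bound before passing to the limit $\epsilon\to 0$, which is exactly how the paper pins down finiteness of $(-\Delta)^s u(x_0)$ and \eqref{eq}. The only small imprecision is your justification of the finiteness of $C_{\delta_0}$: Definition \ref{defi viscosity} gives $u\ge 0$ (not $u\equiv 0$) in $\mathbb{R}^N\setminus\Omega$, so, as in the paper, one should bound the tail from above using $u\ge 0$ outside $\Omega$, $u\in L^1_{loc}$ and lower semicontinuity on $\overline{\Omega}$, the lower bound (hence finiteness) then being forced by the viscosity inequality itself; this does not alter your scheme.
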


\begin{proof}
We assume that $x_0=0$ and $u(0)=0$. For $r > 0$ so that $B_r :=B(0, r)\subset U$, define:
\begin{equation*}
 \phi_r(x):= \left\lbrace
  \begin{array}{l}
  \phi(x),  \text{ in } B_r\\
    u(x),  \text{ outside  }B_r, \\
  \end{array}
  \right.
\end{equation*}Hence for all $0 < \rho < r$
\begin{equation*}
\begin{split}
\int_{B_r \setminus B_\rho}\frac{u(0)-u(y)}{|y|^{N+2s}}dy&= \int_{B_r \setminus B_\rho}\frac{\phi(y)-u(y)}{|y|^{N+2s}}dy - \int_{B_r \setminus
B_\rho}\frac{\phi(y)}{|y|^{N+2s}}dy \\ & \leq - \int_{B_r \setminus B_\rho}\frac{\phi(y)}{|y|^{N+2s}}dy,
\end{split}
\end{equation*}where we have used that $\phi$ touches $u$ from below. As $\rho \to 0$, the last integral  converges  since $\phi \in \mathcal{C}^{2}(B_r)$. Hence
\begin{equation}\label{limit interval}
\lim_{\rho \to 0}\int_{B_r \setminus B_\rho}\frac{u(0)-u(y)}{|y|^{N+2s}}dy \in [-\infty, M],
\end{equation}where
$$M:= \lim_{\rho \to 0} \left(-\int_{B_r \setminus B_\rho}\frac{\phi(y)}{|y|^{N+2s}}dy\right).$$

Also, from the fact that $u$ is a supersolution, we have $u \geq 0$ in $\mathbb{R}^{N}\setminus \Omega$. Thus
\begin{equation}\label{ineq 1}
\begin{split}
\int_{\mathbb{R}^{N}\setminus B_r }\frac{u(0)-u(y)}{|y|^{N+2s}}dy \leq \int_{\overline{\Omega}\setminus B_r }\frac{-u(y)}{|y|^{N+2s}}dy.
\end{split}
\end{equation}Since $u \in LSC(\overline{\Omega})$, there is a constant $m$ so that
$$u(y) \geq m, \textnormal{ for all }y \in \overline{\Omega}\setminus B_r.$$Hence from \eqref{ineq 1}, it follows
$$\int_{\mathbb{R}^{N}\setminus B_r }\frac{u(0)-u(y)}{|y|^{N+2s}}dy \leq -m \int_{\mathbb{R}^{N}\setminus B_r }\frac{1}{|y|^{N+2s}}dy  < \infty.$$This fact,
together with \eqref{limit interval}, imply that $(-\Delta)^{s}u(0) \in [-\infty, \infty)$.

We now prove the estimate \eqref{eq}, and consequently that $(-\Delta)^{s}u(0)$ is finite. For $\rho> 0$, we have by Lemma \ref{absolute continuity} that
\begin{equation*}
\Big\vert P.V. \int_{B_{r}}\frac{\phi(y)}{|y|^{N+2s}}dy \Big\vert \leq \rho,
\end{equation*}choosing $r$ small enough. Hence
\begin{equation*}
\begin{split}
\int_{\mathbb{R}^{N}\setminus B_{r}}\frac{u(0)-u(y)}{|y|^{N+2s}}dy &=\int_{\mathbb{R}^{N}\setminus B_{r}}\frac{\phi_r(0)-\phi_r(y)}{|y|^{N+2s}}dy
\\&=(-\Delta)^{s}\phi_r(0)-P.V. \int_{B_r}\frac{-\phi(y)}{|y|^{N+2s}}dy \\&\geq -|\nabla \phi(0)|^{p}+f(0)+\rho.
\end{split}
\end{equation*}By letting $r \to 0$, and then $\rho\to 0$, we derive \eqref{eq}.

\end{proof}

We now give the main result of this section.

\begin{theorem}[Comparison principle for viscosity solutions]\label{comparison}
Assume that $f \in \mathcal{C}(\Omega)$. Let $v \in USC(\overline{\Omega})$ be a subsolution  and $u\in LSC(\overline{\Omega})$ be a supersolution, respectively,
of \eqref{boundary problem}. Then $v \leq u$ in $\Omega$.
\end{theorem}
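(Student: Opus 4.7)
The plan is to argue by contradiction via the Ishii doubling-of-variables method, adapted to the nonlocal framework. Assume $M := \sup_{\overline{\Omega}}(v-u) > 0$. Since the boundary conditions in Definition \ref{defi viscosity} force $v\leq 0\leq u$ on $\mathbb{R}^{N}\setminus\Omega$ and $v-u\in USC(\overline{\Omega})$, the supremum is attained at some interior point $\hat x \in \Omega$. For small $\varepsilon>0$, I would penalize by
\[
\Phi_{\varepsilon}(x,y) := v(x) - u(y) - \frac{|x-y|^{2}}{\varepsilon^{2}},
\]
take a maximizer $(x_{\varepsilon},y_{\varepsilon})$ of $\Phi_{\varepsilon}$ on $\overline{\Omega}\times \overline{\Omega}$, and use standard arguments to conclude that, along a subsequence, $|x_{\varepsilon}-y_{\varepsilon}|^{2}/\varepsilon^{2}\to 0$, $(x_{\varepsilon},y_{\varepsilon})\to (\hat x,\hat x)$, and $M_{\varepsilon}:=v(x_{\varepsilon})-u(y_{\varepsilon})\to M$; in particular $x_{\varepsilon}, y_{\varepsilon}\in\Omega$ for $\varepsilon$ small.

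The maximum property then supplies two quadratic test functions
\[
\tilde\varphi_{1}(x) := v(x_{\varepsilon}) + \frac{|x-y_{\varepsilon}|^{2}-|x_{\varepsilon}-y_{\varepsilon}|^{2}}{\varepsilon^{2}},\quad \tilde\varphi_{2}(y) := u(y_{\varepsilon}) - \frac{|x_{\varepsilon}-y|^{2}-|x_{\varepsilon}-y_{\varepsilon}|^{2}}{\varepsilon^{2}},
\]
with $\tilde\varphi_{1}\geq v$ near $x_{\varepsilon}$ and $\tilde\varphi_{2}\leq u$ near $y_{\varepsilon}$, both touching with equality at the respective points. By Lemma \ref{smooth functions} (and its subsolution analogue), the P.V. integrals $(-\Delta)^{s}v(x_{\varepsilon})$ and $(-\Delta)^{s}u(y_{\varepsilon})$ exist in the classical pointwise sense and satisfy
\[
(-\Delta)^{s}v(x_{\varepsilon}) + |\nabla \tilde\varphi_{1}(x_{\varepsilon})|^{p} \leq f(x_{\varepsilon}),\quad (-\Delta)^{s}u(y_{\varepsilon}) + |\nabla \tilde\varphi_{2}(y_{\varepsilon})|^{p} \geq f(y_{\varepsilon}).
\]
A direct computation gives $\nabla \tilde\varphi_{1}(x_{\varepsilon}) = \nabla\tilde\varphi_{2}(y_{\varepsilon}) = 2(x_{\varepsilon}-y_{\varepsilon})/\varepsilon^{2}$, so the nonlinear gradient contributions cancel on subtraction, leaving
\[
(-\Delta)^{s}v(x_{\varepsilon}) - (-\Delta)^{s}u(y_{\varepsilon}) \leq f(x_{\varepsilon}) - f(y_{\varepsilon}).
\]

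I would then bound the left-hand side from below. Splitting each P.V. integral into $|z|<\rho$ and $|z|\geq\rho$, the near part uses $v\leq\tilde\varphi_{1}$, $u\geq\tilde\varphi_{2}$ together with the explicit identity $[\tilde\varphi_{1}(x_{\varepsilon})-\tilde\varphi_{1}(x_{\varepsilon}+z)]-[\tilde\varphi_{2}(y_{\varepsilon})-\tilde\varphi_{2}(y_{\varepsilon}+z)] = -2|z|^{2}/\varepsilon^{2}$ to yield a lower bound of order $-\rho^{2-2s}/\varepsilon^{2}$, which vanishes as $\rho\to 0$. On the far part, the invariance of the penalty under the common shift $(x,y)\mapsto (x_{\varepsilon}+z,y_{\varepsilon}+z)$ yields $v(x_{\varepsilon}+z)-u(y_{\varepsilon}+z)\leq M_{\varepsilon}$ whenever both shifted points remain in $\overline{\Omega}$, so the integrand is non-negative there. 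To extract a strictly positive contribution, pick a uniform $R>0$ so that $|z|\geq R$ places both $x_{\varepsilon}+z$ and $y_{\varepsilon}+z$ outside $\Omega$; on that tail, $v(x_{\varepsilon}+z)\leq 0\leq u(y_{\varepsilon}+z)$, the integrand exceeds $M_{\varepsilon}/|z|^{N+2s}$, and integration gives a lower bound $c_{0}\,M_{\varepsilon}$ with $c_{0} = c_{0}(N,s,R)>0$. Sending $\rho\to 0$ and then $\varepsilon\to 0$, continuity of $f$ forces $c_{0}\,M\leq 0$, contradicting $M>0$.

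The main obstacle is reconciling the nonlocal diffusion with the nonlinear gradient absorption. Two compatible features make it work: (i) Lemma \ref{smooth functions} promotes viscosity inequalities to pointwise equalities for the classical P.V. fractional Laplacian of $v$ and $u$ at touching points, enabling an honest subtraction; and (ii) the quadratic form of the penalty is essential to secure $\nabla\tilde\varphi_{1}(x_{\varepsilon}) = \nabla\tilde\varphi_{2}(y_{\varepsilon})$, which forces the $|\nabla u|^{p}$ contributions to cancel. Any non-quadratic penalty would destroy this cancellation, and the comparison could fail under the present hypotheses.
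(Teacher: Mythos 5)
Your strategy coincides with the paper's up to the key estimate: contradiction, doubling of variables with a quadratic penalty, Lemma \ref{smooth functions} to evaluate $(-\Delta)^{s}v(x_{\varepsilon})$ and $(-\Delta)^{s}u(y_{\varepsilon})$ pointwise, and the exact cancellation of the gradient terms. The divergence, and the gap, lies in your lower bound for $(-\Delta)^{s}v(x_{\varepsilon})-(-\Delta)^{s}u(y_{\varepsilon})$ on the far region $\{|z|\geq\rho\}$. You treat only two cases: $z$ for which both $x_{\varepsilon}+z$ and $y_{\varepsilon}+z$ lie in $\overline{\Omega}$ (integrand nonnegative by maximality of the penalized function over $\overline{\Omega}\times\overline{\Omega}$), and $|z|\geq R$ with both points outside $\Omega$ (integrand at least $M_{\varepsilon}|z|^{-N-2s}$). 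These two sets do not exhaust $\{|z|\geq\rho\}$: since $x_{\varepsilon}\neq y_{\varepsilon}$ there is a nonempty \emph{mixed} set of $z$ with, say, $x_{\varepsilon}+z\in\Omega$ but $y_{\varepsilon}+z\notin\Omega$, where the maximality over $\overline{\Omega}\times\overline{\Omega}$ gives nothing and the numerator $M_{\varepsilon}-v(x_{\varepsilon}+z)+u(y_{\varepsilon}+z)$ can be negative (you only know $u(y_{\varepsilon}+z)\geq 0$, while $v(x_{\varepsilon}+z)$ may exceed $M_{\varepsilon}$). As written, the claimed bound by $-C\rho^{2-2s}/\varepsilon^{2}+c_{0}M_{\varepsilon}$ is therefore not established. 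This mixed set is precisely where the paper spends most of its effort: it bounds the integrand on $A_{2,\varepsilon}=\mathbb{R}^{N}\setminus A_{1,\varepsilon}$ from below by a fixed $L^{1}$ function (using $|z|\geq 2r$ there, the exterior sign conditions and $v\in L^{1}_{loc}$), applies Fatou as $\varepsilon\to 0$ so that $A_{2,\varepsilon}$ degenerates to $\{z:\overline{x}+z\notin\Omega\}$ where the sign is controlled, and then concludes via an a.e.\ pointwise inequality and a contradiction at the boundary, rather than via a quantitative tail term.

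The gap is fixable inside your scheme: on the mixed set one has $|z|\geq \mathrm{dist}(y_{\varepsilon},\partial\Omega)\geq r_{0}>0$ for $\varepsilon$ small, the numerator is bounded below by $M_{\varepsilon}-\sup_{\overline{\Omega}}v+\inf_{\overline{\Omega}}u$, which is finite by semicontinuity on the compact set $\overline{\Omega}$, and the measure of the mixed set is at most $|\Omega\,\triangle\,(\Omega+(x_{\varepsilon}-y_{\varepsilon}))|\to 0$ as $\varepsilon\to 0$; hence its contribution vanishes in the limit and your conclusion $0\geq c_{0}M$ survives. Once this is added, your argument is a legitimate (and more quantitative) variant of the paper's proof, since it avoids the Fatou/diagonal step and the final boundary argument. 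Finally, temper the closing remark: any penalty of the form $\Phi(x-y)$, not only the quadratic one, yields $\nabla\tilde\varphi_{1}(x_{\varepsilon})=\nabla\tilde\varphi_{2}(y_{\varepsilon})=\nabla\Phi(x_{\varepsilon}-y_{\varepsilon})$, so the cancellation of the $|\nabla\cdot|^{p}$ terms is not special to the quadratic choice.
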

\begin{proof}  We argue by contradiction. Assume that there is $x_0 \in \Omega$ so that:
\begin{equation*}
\sigma := \sup_{\Omega}(v-u) = v(x_0)-u(x_0) >0.
\end{equation*} As usual, we double the variables and consider for $\epsilon > 0$ the function
\begin{equation*}
\Psi_\epsilon(x, y):=v(x)-u(y)-\frac{1}{\epsilon}|x-y|^{2}.
\end{equation*}By the upper semi continuity of $v$ and $-u$, there exist $x_\epsilon$ and $y_\epsilon$ in $\overline{\Omega}$ so that
\begin{equation*}
M_\epsilon:= \sup_{\overline{\Omega}\times \overline{\Omega}}\Psi_\epsilon = \Psi_\epsilon(x_\epsilon, y_\epsilon).
\end{equation*}By compactness, $x_\epsilon \to \overline{x}$ and $y_\epsilon \to \overline{y}$, up to  subsequence that we do not re-label.  From
\begin{equation}\label{ineq with epsilon}
\Psi_\epsilon(x_\epsilon, y_\epsilon) \geq \Psi_\epsilon(x_0, x_0)
\end{equation}and the upper boundedness of $v$ and $-u$ in $\overline{\Omega}$, we derive
$$\lim_{\epsilon \to 0}|x_\epsilon-y_\epsilon|^{2}=0,$$hence $\overline{x}=\overline{y}$. Moreover
$$\Psi_\epsilon(x_\epsilon, y_\epsilon) \geq \Psi_\epsilon(\overline{x}, \overline{x})$$implies that:
$$\lim_{\epsilon \to 0}\frac{1}{\epsilon}|x_\epsilon-y_\epsilon|^{2}=0.$$As a consequence, by letting $\epsilon \to 0$ in \eqref{ineq with epsilon} and using the
semicontinuity of $u$ and $v$, we obtain
\begin{equation}\label{sigma u v}
\sigma = \lim_{\epsilon \to 0}(v(x_\epsilon)-u(y_\epsilon)).
\end{equation}Also, observe that $\overline{x} \in \Omega$, because otherwise there is a contraction with $v \leq u$ in $\mathbb{R}^{N}\setminus\Omega$.

Define the  $\mathcal{C}^2$ test functions
$$\phi_\epsilon(x):= v(x_\epsilon)-\frac{1}{\epsilon}|x_\epsilon-y_\epsilon|^{2}+\frac{1}{\epsilon}|x-y_\epsilon|^{2},$$
$$\psi_\epsilon(y):= u(y_\epsilon)-\frac{1}{\epsilon}|x_\epsilon-y_\epsilon|^{2}+\frac{1}{\epsilon}|x_\epsilon-y|^{2}.$$Then $\phi_\epsilon$ touches $v$ from
above at $x_\epsilon$ and $\psi_\epsilon$ touches $u$ from below at $y_\epsilon$. By Lemma \ref{smooth functions}, we have
\begin{equation*}
(-\Delta)^{s}v(x_\epsilon) + |\nabla \phi_\epsilon(x_\epsilon)|^{p} \leq f(x_\epsilon)
\end{equation*}and
\begin{equation*}
(-\Delta)^{s}u(y_\epsilon) + |\nabla \psi_\epsilon(y_\epsilon)|^{p} \geq f(y_\epsilon)
\end{equation*}
Therefore:
\begin{equation}\label{diff frac lap}
(-\Delta)^{s}v(x_\epsilon)-(-\Delta)^{s}u(y_\epsilon)\leq
f(x_\epsilon)-f(y_\epsilon)+|\nabla\psi_\epsilon(y_\epsilon)|^{p}-|\nabla\phi_\epsilon(x_\epsilon)|^{p}.
\end{equation}Since $f \in \mathcal{C}(\Omega)$ and
$$\nabla_y\psi_\epsilon(y_\epsilon)=-\nabla_x \phi_\epsilon(x_\epsilon),$$we have that the right hand side in \eqref{diff frac lap} tends to $0$ as $\epsilon \to
0$. Thus, we obtain

\begin{equation}\label{ine 2}
\begin{split}
&\liminf_{\epsilon \to 0}\int_{\mathbb{R}^{N}}\frac{v(x_\epsilon)-v(x_\epsilon+z)-u(y_\epsilon)+u(y_\epsilon+z)}{|z|^{N+2s}}dz \\& \qquad \qquad
\qquad=\liminf_{\epsilon \to 0}\left( (-\Delta)^{s}v(x_\epsilon)-(-\Delta)^{s}u(y_\epsilon)\right) \leq 0.
\end{split}
\end{equation}Let $A_{1, \epsilon}:= \{z \in \mathbb{R}^{N}: x_\epsilon + z, y_\epsilon +z \in \Omega\}$. Hence for $z \in A_{1, \epsilon}$, we have from the
inequality
$$\Psi_\epsilon(x_\epsilon, y_\epsilon) \geq \Psi_\epsilon(x_\epsilon+z,y_\epsilon+z )$$that
\begin{equation}\label{non-negative}
v(x_\epsilon)-v(x_\epsilon+z)-u(y_\epsilon)+u(y_\epsilon+z)\geq 0.
\end{equation}Define $A_{2, \epsilon}:= \mathbb{R}^{N}\setminus A_{1, \epsilon}$. We will justify that we are allowed to use Fatou's Theorem in
\begin{equation}\label{need for Fatou}
\liminf_{\epsilon \to 0}\int_{A_{2, \epsilon}}\frac{v(x_\epsilon)-u(y_\epsilon)-v(x_\epsilon+z)+u(y_\epsilon+z)}{|z|^{N+2s}}dz
\end{equation}by showing that the integrand is  bounded from below by an $L^{1}$ function. Firstly, let $r >0$ so that $B_{3r}(\overline{x}) \subset \Omega$ and
take $\epsilon_0$ small enough such that $x_\epsilon, y_\epsilon \in B_r(\overline{x})$ for all $\epsilon < \epsilon_0$.   Take $z \in A_{2, \epsilon}$. We show
now that $|z|\geq 2r$. Indeed, to reach a contradiction, assume that $|z| < 2r$. Since $z \notin A_{1, \epsilon}$, it follows that $x_\epsilon + z$ or $y_\epsilon
+ z$ does not belong to $\Omega$. Without loss of generality, assume $x_\epsilon +z \notin \Omega$. Hence
$$|x_\epsilon +z-\overline{x}|  < 3r,$$and so $x_\epsilon+ z \in B_{3r}(\overline{x}) \subset \Omega $ which is a contradiction.  Next, notice that
\begin{equation}\label{bdd below }
\frac{v(x_\epsilon)-v(x_\epsilon+z)}{|z|^{N+2s}} \geq -\frac{|v(x_\epsilon)|}{|z|^{N+2s}}-\frac{|v_+(x_\epsilon+z)|}{|z|^{N+2s}}.
\end{equation}Hence, using that $z \notin B_{2r}$ when $z \in A_{2, \epsilon}$, we have
$$\int_{A_{2, \epsilon}}\dfrac{|v(x_\epsilon)|}{|z|^{N+2s}} dz \leq C \int_{\mathbb{R}^{N}\setminus B_{2r}}\dfrac{1}{|z|^{N+2s}}dz < \infty.$$On the other hand

\begin{equation*}
\begin{split}
\int_{A_{2, \epsilon}}\dfrac{|v_+(z+x_\epsilon)|}{|z|^{N+2s}}dz &\leq \int_{\mathbb{R}^{N}\setminus B_{2r}}\dfrac{|v_+(z+x_\epsilon)|}{|z|^{N+2s}}dz \\& =
\int_{\mathbb{R}^{N}\setminus B_{2r}(x_\epsilon)}\dfrac{|v_+(y)|}{|y-x_\epsilon|^{N+2s}}dy.
\end{split}
\end{equation*}Since $v$ is a subsolution, we have $v \leq 0$ in $\mathbb{R}^{N}\setminus \Omega$. Hence
\begin{equation*}
\begin{split}
\int_{A_{2, \epsilon}}\dfrac{|v_+(z+x_\epsilon)|}{|z|^{N+2s}}dz & \leq \int_{\overline{\Omega}\setminus
B_{2r}(x_\epsilon)}\dfrac{|v_+(y)|}{|y-x_\epsilon|^{N+2s}}dy \\& \leq \int_{\overline{\Omega}\setminus
B_{r}(\overline{x})}\dfrac{|v_+(y)|}{|y-x_\epsilon|^{N+2s}}dy \\& \leq \frac{1}{r^{N+2s}} \int_{\overline{\Omega}\setminus B_{r}(\overline{x})} v_+(y)dy.
\end{split}
\end{equation*}Observe that the last integral is finite since $v \in L^{1}_{loc}(\mathbb{R}^{N})$ by definition. In this way, recalling \eqref{bdd below }, the
term
$$\frac{v(x_\epsilon)-v(x_\epsilon+z)}{|z|^{N+2s}} $$is bounded from below by an $L^{1}$-integrable function.  A similar result follows for
$$\frac{u(y_\epsilon+z)-u(y_\epsilon)}{|z|^{N+2s}}.$$ Hence, we may use Fatou Lemma in \eqref{need for Fatou} and derive

\begin{equation}\label{long calculation}
\begin{split}
&\liminf_{\epsilon \to 0}\int_{A_{2, \epsilon}}\frac{v(x_\epsilon)-u(y_\epsilon)-v(x_\epsilon+z)+u(y_\epsilon+z)}{|z|^{N+2s}}dz \\ & \qquad\geq\int_{
\mathbb{R}^{N}}\liminf_{\epsilon \to 0}\frac{v(x_\epsilon)-u(y_\epsilon)-v(x_\epsilon+z)+u(y_\epsilon+z)}{|z|^{N+2s}}\chi_{A_{2, \epsilon}}(z) dz \\ & \qquad \geq
\int_{\mathbb{R}^{N}\setminus A_{\overline{x}}}\frac{\sigma +u(\overline{x}+z)-v(\overline{x}+z)}{|z|^{N+2s}}dz.
\end{split}
\end{equation}Here $A_{\overline{x}}:=\{z\in \mathbb{R}^{N}: \overline{x}+z \in \Omega\}$ and we have used  the a. e. pointwise  convergence of $\chi_{A_{2,
\epsilon}}$ to $\chi_{A_{\overline{x}}}$,  \cite[Lemma 4.3]{Brezis} together with a diagonal argument to conclude for a subsequence
$$\liminf_{\epsilon \to 0}[v(x_\epsilon)-u(y_\epsilon)-v(x_\epsilon+z)+u(y_\epsilon+z)] \geq \sigma -v(\overline{x}+z)+ u(\overline{x}+z)$$ for a. e $z \in
\mathbb{R}^{N}$.
Moreover, the inequality $u \geq v$ in $\mathbb{R}^{N}\setminus \Omega$ implies that the last integral in \eqref{long calculation} is non-negative. Then
\begin{equation}\label{nonnegative}
\liminf_{\epsilon \to 0}\int_{A_{2, \epsilon}}\frac{v(x_\epsilon)-u(y_\epsilon)-v(x_\epsilon+z)+u(y_\epsilon+z)}{|z|^{N+2s}}dz \geq 0.
\end{equation}

Therefore by Fatou Lemma, \eqref{nonnegative} and \eqref{ine 2}, we deduce
\begin{equation*}
\begin{split}
&\int_{\mathbb{R}^{N}}\liminf_{\epsilon \to 0}\frac{v(x_\epsilon)-u(y_\epsilon)-v(x_\epsilon+z)+u(y_\epsilon+z)}{|z|^{N+2s}}\chi_{A_{1, \epsilon}}dz\\& \quad
\leq\liminf_{\epsilon \to 0}\int_{A_{1, \epsilon}}\frac{v(x_\epsilon)-u(y_\epsilon)-v(x_\epsilon+z)+u(y_\epsilon+z)}{|z|^{N+2s}}dz  \\ & \quad\leq
\liminf_{\epsilon \to 0}\int_{A_{1, \epsilon}}\frac{v(x_\epsilon)-u(y_\epsilon)-v(x_\epsilon+z)+u(y_\epsilon+z)}{|z|^{N+2s}}dz\\ & \qquad \qquad+
\liminf_{\epsilon \to 0}\int_{A_{2, \epsilon}}\frac{v(x_\epsilon)-u(y_\epsilon)-v(x_\epsilon+z)+u(y_\epsilon+z)}{|z|^{N+2s}}dz\\ & \quad \leq \liminf_{\epsilon
\to 0}\int_{\mathbb{R}^{N}}\frac{v(x_\epsilon)-u(y_\epsilon)-v(x_\epsilon+z)+u(y_\epsilon+z)}{|z|^{N+2s}}dz \leq 0.
\end{split}
\end{equation*}Hence
$$\liminf_{\epsilon \to 0}\frac{v(x_\epsilon)-u(y_\epsilon)-v(x_\epsilon+z)+u(y_\epsilon+z)}{|z|^{N+2s}} \leq 0$$almost everywhere in $A_{1, \epsilon}$. In
particular for $z  \in A_{\overline{x}}$.  We then  have by the lower semicontinuity of $-v$ and $u$  in $\overline{\Omega}$ and \eqref{sigma u v}, that
\begin{equation*}
\begin{split}
0&\geq \liminf_{\epsilon \to 0}\left[ v(x_\epsilon)-u(y_\epsilon)-v(x_\epsilon+z)+u(y_\epsilon+z) \right]\\& \geq \sigma +u(\overline{x}+z)-v(\overline{x}+z).
\end{split}
\end{equation*}Since  $z  \in A_{\overline{x}}$ is arbitrary, we conclude $\sigma \leq v(x)-u(x)$ for a.e. in $\overline{\Omega}$, which implies for $x \in
\partial \Omega$
$$0 \geq v(x)-u(x) \geq \limsup_{y \to x, y \in \Omega}(v(y)-u(y)) \geq \sigma.$$A contradiction with the hypothesis.
\end{proof}
\subsection{Equivalence between strong and viscosity solutions}
In this subsection we prove that strong  and viscosity solutions coincide.
\begin{theorem}\label{first implication}
Any strong  solution $u \in \mathcal{C}^{1, \alpha}(\Omega)$  to problem \eqref{boundary problem} is a viscosity solution as well.
\end{theorem}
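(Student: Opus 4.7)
The plan is to verify separately that a strong solution $u$ is both a viscosity subsolution and a viscosity supersolution to \eqref{boundary problem}. In both cases the strategy is the same: compare the test function with $u$ in the spirit of Definition \ref{defi viscosity}, exploit the elementary fact that a function having a global minimum (resp.\ maximum) at a point has nonpositive (resp.\ nonnegative) fractional Laplacian there, and then invoke the pointwise equation $(-\Delta)^s u(x_0) + |\nabla u(x_0)|^p = f(x_0)$ satisfied by the strong solution.

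For the subsolution part, I would fix an open set $U \subset \Omega$, a point $x_0 \in U$ and $\phi \in \mathcal{C}^2(U)$ with $\phi \geq u$ in $U$ and $\phi(x_0) = u(x_0)$, and form the extension $v$ as in \eqref{function v}. Setting $w := v - u$, one checks that $w \geq 0$ on all of $\mathbb{R}^N$ (it equals $\phi - u \geq 0$ on $U$ and vanishes outside $U$) while $w(x_0)=0$. From the integral representation
\[
(-\Delta)^s w(x_0) = -\mathrm{P.V.}\int_{\mathbb{R}^N}\frac{w(y)}{|x_0-y|^{N+2s}}\,dy,
\]
the sign of $w$ gives $(-\Delta)^s w(x_0) \leq 0$, hence $(-\Delta)^s v(x_0) \leq (-\Delta)^s u(x_0)$. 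Since $\phi-u$ attains an interior minimum at $x_0$ and both functions are $\mathcal{C}^1$ near $x_0$, I also obtain $\nabla \phi(x_0) = \nabla u(x_0)$. Combining these with the pointwise equation,
\[
(-\Delta)^s v(x_0) + |\nabla \phi(x_0)|^p \leq (-\Delta)^s u(x_0) + |\nabla u(x_0)|^p = f(x_0),
\]
and the condition $v \leq 0$ on $\mathbb{R}^N \setminus \Omega$ is immediate, since $U \subset \Omega$ forces $v = u = 0$ there.

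The supersolution check is entirely symmetric: a test $\psi \in \mathcal{C}^2(U)$ touching $u$ from below at $x_0$ yields an extension $v$ with $v - u \leq 0$ and a global maximum at $x_0$, so the same integral argument gives $(-\Delta)^s v(x_0) \geq (-\Delta)^s u(x_0)$ together with $\nabla \psi(x_0) = \nabla u(x_0)$, leading to $(-\Delta)^s v(x_0) + |\nabla \psi(x_0)|^p \geq f(x_0)$ and $v \geq 0$ outside $\Omega$.

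The main technical point to take care of is the justification that $(-\Delta)^s v(x_0)$ and $(-\Delta)^s w(x_0)$ are well defined as principal value integrals, so that their signs can be read off directly from the integrand. Near $x_0$ the function $w = v - u = \phi - u$ is $\mathcal{C}^2$ in a small ball, since by Definition \ref{strong} the strong solution $u$ is $\mathcal{C}^{2s+\alpha}$ locally and $\phi$ is $\mathcal{C}^2$; this controls the singular part of the kernel. Far from $x_0$, $w$ is bounded with support in $\overline{U}$, which handles the tail. This is the only place where the extra regularity carried by a strong solution is actually used; once the pointwise evaluation is justified, the argument reduces to a textbook application of the extremum principle for $(-\Delta)^s$ together with the strong equation.
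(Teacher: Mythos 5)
Your proof is correct and follows essentially the same route as the paper's: use that $\phi-u$ attains an interior minimum at $x_0$ to get $\nabla\phi(x_0)=\nabla u(x_0)$, use the sign of $v-u$ to obtain $(-\Delta)^{s}v(x_0)\le(-\Delta)^{s}u(x_0)$, and plug in the pointwise equation satisfied by the strong solution (and symmetrically for the supersolution part). Your closing paragraph on the convergence of the principal-value integrals is a harmless refinement the paper leaves implicit, with the small correction that $v-u$ near $x_0$ is only $\mathcal{C}^{\min(2,\,2s+\alpha)}$ rather than $\mathcal{C}^{2}$, which is still more than enough regularity (exceeding order $2s$) to control the singular part of the kernel.
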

\begin{remark}For conditions to ensure the existence of strong  solutions  to problem \eqref{boundary problem} see Theorem \ref{regularity and order int f1},
Theorem \ref{thh} and Corollary \ref{cor classic}.
\end{remark}
\begin{proof}The proof is straightforward, we give it by completeness. Let $u \in \mathcal{C}^{1, \alpha}(\Omega)$ be such that
\begin{equation*}
(-\Delta)^{s}u(x)+ |\nabla u(x)|^{p}=f(x), \quad \text{for all }x \in \Omega.
\end{equation*}Let  $U \subset \Omega$ be open, take $x_0 \in U$ and let $\phi \in \mathcal{C}^{2}(U)$ be such that $u(x_0)=\phi(x_0)$ and $\phi \geq u$ in $U$.
Define
\begin{equation}\label{function v1}
 v(x):= \left\lbrace
  \begin{array}{l}
  \phi(x),  \text{ in } U\\
    u(x),  \text{ outside  }U.\\
  \end{array}
  \right.
\end{equation}Hence, since $u$ is $\mathcal{C}^{1}$, $\nabla u(x_0)= \nabla \phi(x_0)$ and then we have that
$$(-\Delta)^{s}\phi(x_0) + |\nabla \phi(x_0)|^{p} =(-\Delta)^{s}\phi(x_0) + |\nabla u(x_0)|^{p}.$$
By the assumption on $\phi$, we have that $(-\Delta)^{s}\phi(x_0) \leq (-\Delta)^{s}u(x_0)$ and so the $u$ is a viscosity sub-solution. In a similar way, $u$ is a
super-solution and the conclusion follows. \end{proof}
\begin{theorem}\label{second implication}
Assume that the condition \eqref{more} holds that $f \in \mathcal{C}^{\epsilon}(\Omega)\cap L^m(\O)$, for some $\epsilon >0$ and $m>\frac{N}{2s-1}$. We suppose
that $||f||_{L^m(\O)}\le \l^*$ defined in Theorem \ref{thh}. Then any viscosity solution is a strong solution.
\end{theorem}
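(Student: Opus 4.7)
The plan is to derive the conclusion by combining the three ingredients already established in the paper: the existence of a strong solution under the hypothesis on $f$, the fact that strong solutions are automatically viscosity solutions (Theorem \ref{first implication}), and the comparison principle for viscosity solutions (Theorem \ref{comparison}). The hypotheses of the statement are precisely tailored so that all three ingredients are available simultaneously.

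First, I would invoke Theorem \ref{thh} together with Corollary \ref{cor classic}: since condition \eqref{more} holds, $f\in \mathcal{C}^{\epsilon}(\Omega)$ for some $\epsilon\in(0,2s-1)$, $f\in L^m(\O)$ with $m>N/(2s-1)$ and $\|f\|_{L^m(\O)}\le \lambda^*$, there exists a distributional solution $v$ of \eqref{boundary problem} that is in fact a strong $\mathcal{C}^{1,\alpha}$ solution in the sense of Definition \ref{strong}, satisfying the equation pointwise in $\Omega$ together with $v=0$ in $\mathbb{R}^{N}\setminus\Omega$. By Theorem \ref{first implication}, this strong solution $v$ is also a viscosity solution of \eqref{boundary problem}.

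Next, let $u$ be the given viscosity solution. By Definition \ref{defi viscosity}, $u$ is continuous, is simultaneously a viscosity subsolution and a viscosity supersolution, and satisfies $u=0$ on $\mathbb{R}^{N}\setminus\Omega$. Because $f\in\mathcal{C}^{\epsilon}(\Omega)\subset \mathcal{C}(\Omega)$, Theorem \ref{comparison} applies. I would apply it twice: once treating $u$ as a subsolution and $v$ as a supersolution to obtain $u\le v$ in $\Omega$, and once treating $v$ as a subsolution and $u$ as a supersolution to obtain $v\le u$ in $\Omega$. Combined with the matching zero boundary datum outside $\Omega$, this forces $u\equiv v$ in $\mathbb{R}^{N}$, so $u$ inherits the $\mathcal{C}^{2s+\epsilon}$ regularity of $v$ and satisfies the equation pointwise, i.e.\ $u$ is a strong solution.

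The step that could be delicate is the verification that the viscosity solution $u$ actually lies in the class to which Theorem \ref{comparison} applies, namely $u\in USC(\overline\Omega)\cap LSC(\overline\Omega)$ with the correct exterior condition; this is built into Definition \ref{defi viscosity}, so the only subtle point is that the comparison principle is formulated symmetrically and can be applied in both directions against the $\mathcal{C}^{1,\alpha}$ strong solution $v$ (which trivially belongs to both $USC(\overline\Omega)$ and $LSC(\overline\Omega)$). Once this is noted, the argument is a short chain: \emph{existence of strong solution} $\Rightarrow$ \emph{existence of viscosity solution of the same type} $\Rightarrow$ \emph{uniqueness of viscosity solution by comparison} $\Rightarrow$ \emph{the given viscosity solution equals the strong one}.
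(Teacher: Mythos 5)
Your argument is correct and coincides with the paper's own proof: both produce the strong solution $v$ from Theorem \ref{thh} and Corollary \ref{cor classic}, note via Theorem \ref{first implication} that $v$ is a viscosity solution, and then apply the comparison principle of Theorem \ref{comparison} (in both directions) to conclude $u=v$, so $u$ is strong. Your added remark about checking the semicontinuity classes for the two comparisons is a harmless elaboration of the same argument.
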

\begin{proof}
To prove the converse, assume that $u$ is a viscosity solution to problem \eqref{boundary problem}. In view of Theorem \ref{thh} and Corollary \ref{cor classic},
there exists a distributional solution $v$ (which is also strong in view of the assumptions on $f$). Since any  strong solution is of viscosity, we consequently
infer from the Comparison Theorem \ref{comparison} that $u=v$. This ends the proof of the theorem.
\end{proof}

\section{ Some open problems.}

\begin{enumerate}

\item For the existence of solution using approximating argument, the limitation $p<2s$ seems to be technical, we hope that the existence of a solution holds
    for all $p\le 2s$ and for all $f\in L^1(\O)$. For $p>2s$, this is an interesting open question, even  for the Laplacian, with $L^m$ data. Notice that this
    is not the framework of the paper \cite{PPL}.

\item For $p>2s$, it seems to be interesting to eliminate the  smallness condition $||f||_{L^m(\O)}$ and to treat more general set of $p$ without the
    condition \eqref{more}.

\item In order to understand a bigger class of linear integro-differential operators, is seems necessary to obtain alternative techniques independent of the
    representation formula.

\end{enumerate}

\textbf{Acknowledgements.}
\textit{The authors would like to thank the anonymous reviewer for his$/$her careful reading of the paper and his$/$her many insightful comments and suggestions.}

\end{document}